\documentclass[11pt]{amsart}
\usepackage{amsmath}
\usepackage{latexsym, bm}
\usepackage{indentfirst}
\usepackage{geometry}\geometry{margin=1.5in}
\usepackage{array}
\usepackage{lipsum}
\usepackage{setspace}
\usepackage{multirow}
\usepackage{tabu}
\usepackage{amsfonts}
\usepackage{amsthm}
\setlength\extrarowheight{3pt}
\setlength{\parskip}{0pt}
\usepackage{fancyhdr}
\usepackage{tikz}
\usepackage{amscd}
\usepackage{amssymb}
\usepackage{enumerate}

\usepackage{amssymb,amsfonts}
\usepackage[all,arc]{xy}
\usepackage{enumerate}
\usepackage{mathrsfs}
\usepackage{geometry}\geometry{margin=1.5in}
\usepackage{amsmath}
\usepackage{latexsym, bm}
\usepackage{indentfirst}
\usepackage{CJK}
\usepackage{amsthm}
\usepackage{hyperref}
\usepackage{cite}

\usepackage{amsmath}
\usepackage{latexsym, bm}
\usepackage{indentfirst}
\usepackage{geometry}\geometry{margin=1.5in}
\usepackage{array}
\usepackage{lipsum}
\usepackage{setspace}
\usepackage{multirow}
\usepackage{tabu}
\usepackage{amsfonts}
\usepackage{amsthm}
\setlength\extrarowheight{3pt}
\setlength{\parskip}{0pt}
\usepackage{fancyhdr}
\usepackage{tikz}
\usepackage{amscd}
\usepackage{amssymb}

\usepackage{tikz-cd}
\usepackage{extarrows}

\usepackage{amssymb,amsfonts}
\usepackage[all,arc]{xy}
\usepackage{enumerate}
\usepackage{mathrsfs}
\usepackage{geometry}\geometry{margin=1.5in}
\usepackage{amsmath}
\usepackage{latexsym, bm}
\usepackage{indentfirst}
\usepackage{CJK}
\usepackage{amsthm}
\usepackage{hyperref}
\usepackage{cite}

\def \f{\frac}
\def \a{\alpha}
\def \b{\beta}

\def \lra{\longrightarrow}

\def \p{\partial}

\def \G{\Gamma}

\newtheorem{thm}{Theorem}[section]
\newtheorem{cor}[thm]{Corollary}
\newtheorem{lem}[thm]{Lemma}
\newtheorem{prop}[thm]{Proposition}

\newtheorem{defn}[thm]{Definition}
\newtheorem{remk}[thm]{Remark}

\newtheorem{que}[thm]{Question}

\DeclareMathOperator{\C}{\mathbb C}

\fancyfoot[C]{\thepage}

\begin{document}
\title{Kodaira dimensions of almost complex manifolds I}
\author{Haojie Chen }
\address{Department of Mathematics\\ Zhejiang Normal University\\ Jinhua Zhejiang, 321004, China}
\email{chj@zjnu.edu.cn}
\author{Weiyi Zhang}
\address{Mathematics Institute\\  University of Warwick\\ Coventry, CV4 7AL, England}
\email{weiyi.zhang@warwick.ac.uk}

\begin{abstract}
This is the first of a series of papers, in which we study the plurigenera, the Kodaira dimension and more generally the Iitaka dimension on compact almost complex manifolds.

Based on the Hodge theory on almost complex manifolds, we introduce the plurigenera, Kodaira dimension and Iitaka dimension on compact almost complex manifolds. We show that the plurigenera and the Kodaira dimension as well as the irregularity are birational invariants in almost complex category, at least in dimension $4$, where a birational morphism is defined to be a degree one pseudoholomorphic map. However, they are no longer deformation invariants, even in dimension $4$ or under tameness assumption. On the way to establish the birational invariance, we prove the Hartogs extension theorem in the almost complex setting by the foliation-by-disks technique.

Some interesting phenomena of these invariants are shown through examples. In particular, we construct non-integrable compact almost complex manifolds with large Kodaira dimensions. Hodge numbers and plurigenera are computed for the standard almost complex structure on the six sphere $S^6$, which are different from the data of a hypothetical complex structure.

\end{abstract}

\date{}
\maketitle

\tableofcontents

\section{Introduction}
The Iitaka dimension for a holomorphic line bundle $L$ over a compact complex manifold is a numerical invariant to measure the size of the space of holomorphic sections. It could be equivalently defined as the growth rate of the dimension of the space $H^0(X, L^{\otimes d})$, or the maximal image dimension of the rational map to projective space determined by powers of $L$, or $1$ less than the dimension of the section ring of $L$. The Iitaka dimension of the canonical bundle $\mathcal K_X$ of a compact complex manifold $X$ is called its Kodaira dimension and $H^0(X, \mathcal K_X^{\otimes d})$ is called the $d$-th plurigenus.

The Kodaira dimension, plurigenera and the canonical section ring are birational invariants. They play important roles in the study of complex manifolds. In particular, the Kodaira dimension is used to give a rough birational classification of complex manifolds. It is known that Kodaira dimension is a deformation invariant for compact complex surfaces, although it is no longer true for complex (non-K\"ahler) $3$-folds \cite{Nak}. Siu \cite{S1},\cite{S2} shows that plurigenera, and thus also the Kodaira dimension, are invariant with respect to projective deformations of algebraic varieties. Birkar-Cascini-Hacon-McKernan \cite{BCHM} shows that the canonical ring of a smooth projective variety is finitely generated, which implies that there is a unique canonical model for every variety of general type.

The theory of complex manifolds lies in the more general framework of almost complex manifolds. In \cite{Z2}, intersection theory of almost complex manifolds is introduced. As a consequence, pseudoholomorphic degree one maps are considered as birational morphisms in the almost complex category. An important step to develop birational geometry for almost complex manifolds is to introduce and study birational invariants.

In this series of papers, we will generalize the notions of Kodaira dimension, plurigenera as well as the space of holomorphic $p$-forms to compact almost complex manifolds. The crucial initial step is to have generalizations of holomorphic line bundle and its holomorphic sections. We have two equivalent versions.

The first is from differential geometry. A {\it pseudoholomorphic structure} on a complex vector bundle $E$ over an almost complex manifold $X$ is a differential operator $\bar{\p}_E$ acting on smooth sections which satisfies the  Leibniz rule (Definition \ref{ps}). In particular, the canonical bundle has a natural pseudoholomorphic structure inherited from the almost complex structure on $X$.  By Koszul-Malgrange theorem, $\bar{\p}_E^2=0$ on a complex manifold is equivalent to a holomorphic structure on the complex bundle $E$. Our generalized version of holomorphic sections is just the smooth sections in the kernel of $\bar{\p}_E$.

To show these generalized holomorphic sections are of finite dimenstion, we apply the method of Hodge theory. Hodge theory is well developed on compact complex manifolds and on general compact Riemannian manifolds. On the way of making sense of the counting of pseudoholomorphic sections and defining plurigenera for almost complex manifolds, we develop the Hodge theory for Hermitian bundles over compact almost complex manifolds in details and show the following theorem.

\begin{thm}
Let $E$ be a complex vector bundle with a pseudoholomorphic structure over a compact almost complex manifold $(X, J)$. Then $H^0(X, E)$ is finite dimensional. In particular, $H^0(X, \mathcal K^{\otimes m})$ is finite dimensional and an invariant of $J$.
\end{thm}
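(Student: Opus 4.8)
The plan is to identify $H^0(X,E)$ with the kernel of a Laplace‑type elliptic operator and then invoke the standard elliptic package on the compact manifold $X$. By definition $H^0(X,E)=\ker\bigl(\bar\partial_E\colon C^\infty(X,E)\to\Omega^{0,1}(X,E)\bigr)$. First I would fix a Hermitian metric on $E$ together with a $J$-compatible Riemannian metric on $X$; these induce $L^2$ inner products on the spaces $\Omega^{0,q}(X,E)$ and hence a formal adjoint $\bar\partial_E^{*}\colon\Omega^{0,1}(X,E)\to C^\infty(X,E)$. Set $\Box_E:=\bar\partial_E^{*}\bar\partial_E$ on $C^\infty(X,E)$. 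The crucial observation — and this is precisely where the failure of $\bar\partial_E^{2}=0$ does no harm — is that in bidegree $(0,0)$ the operator $\bar\partial_E$ is overdetermined elliptic: its principal symbol at a nonzero real covector $\xi$ is $s\mapsto \xi^{0,1}\otimes s$, which is injective. Hence $\Box_E$ has scalar principal symbol $\tfrac12|\xi|^{2}\,\mathrm{Id}$, so it is a formally self-adjoint, non-negative generalized Laplacian on $E$, genuinely elliptic despite $X$ being only almost complex.

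Next I would run the usual elliptic theory for $\Box_E$ on the compact manifold $X$: Sobolev spaces of $E$-valued sections, G\aa rding's inequality, elliptic regularity, and the spectral theorem for non-negative self-adjoint elliptic operators. This gives that $\ker\Box_E$ is finite dimensional and consists of smooth sections, and yields the $L^2$-orthogonal decomposition $C^\infty(X,E)=\ker\Box_E\oplus\overline{\mathrm{Im}\,\bar\partial_E^{*}}$. Since $\langle\Box_E s,s\rangle_{L^2}=\|\bar\partial_E s\|_{L^2}^{2}$, one has $\ker\Box_E=\ker\bar\partial_E=H^0(X,E)$, which proves the finite dimensionality. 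This is nothing but the bidegree-$(0,0)$ case of the Hodge theory for Hermitian bundles developed earlier in the paper; if that machinery is already available one simply quotes the Hodge decomposition in degree $(0,0)$.

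For the canonical bundle, recall that $\mathcal K$ — the top exterior power of the $(1,0)$-cotangent bundle — carries the pseudoholomorphic structure $\bar\partial$ canonically induced by $J$, and therefore so does every tensor power $\mathcal K^{\otimes m}$; applying the first part gives $\dim_{\mathbb C}H^0(X,\mathcal K^{\otimes m})<\infty$. Finally, the space $H^0(X,\mathcal K^{\otimes m})=\ker\bar\partial_{\mathcal K^{\otimes m}}$ is defined purely in terms of the operator $\bar\partial$, which depends only on $J$; the auxiliary Hermitian metrics entered only in establishing finiteness and not in the definition of the space. Hence $H^0(X,\mathcal K^{\otimes m})$, and in particular its dimension, is an invariant of the almost complex structure $J$, as asserted.

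I expect the only real effort to lie in the careful setup of the elliptic/Hodge theory for $E$-valued $(0,q)$-forms in the almost complex category — constructing $\bar\partial_E$ and its formal adjoint, verifying the symbol computation above, and transcribing the compact-manifold a priori estimates and spectral decomposition — rather than in any genuinely new analytic difficulty. The conceptual heart of the argument is just that ellipticity of $\bar\partial_E$ on sections persists even though $\bar\partial_E^{2}\ne 0$, so no integrability or tameness hypothesis is needed.
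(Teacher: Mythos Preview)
Your proposal is correct and follows essentially the same approach as the paper: set up the $\bar\partial_E$-Laplacian via an auxiliary Hermitian structure, verify ellipticity via a symbol computation, invoke the standard elliptic package on compact manifolds, and then identify $\ker\Box_E=\ker\bar\partial_E$ in bidegree $(0,0)$. The paper carries this out for all bidegrees $(p,q)$ (defining $\bar\partial_E^*=-*\nabla^{(1,0)}*$ through the canonical Hermitian connection and checking ellipticity by freezing coefficients at a point), whereas you work directly in degree $(0,0)$ and compute the symbol explicitly; these are cosmetic differences, not substantive ones.
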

This result gives us a good base to count pseudoholomorphic sections. In fact, we are able to define Dolbeault harmonic forms $\mathcal{H}_{\bar{\p}_{E}}^{(p,q)}(X, E)$ which give Dolbeault type cohomology groups when $q=0$. The vector space $H^0(X, E)$ is simply the case of $p=q=0$. When $E$ is the trivial bundle, the space of harmonic forms of type $(p,q)$ has been defined in \cite{Hir}. The Problem 20 in Hirzebruch's list\cite{Hir}, raised by Kodaira and Spencer, asks whether their dimensions are independent of the choice of the Hermitian structure.\footnote{Recently, this problem is answered negatively in \cite{HZ}.} Our discussion gives affirmative answer to this problem when $q=0$ or, by Serre duality (Proposition \ref{Serre}), $q=\dim_{\C} X$. 

The second description of pseudoholomorphic sections is more geometric. There are special almost complex structures on the total space of the complex vector bundle $E$, called {\it bundle almost complex structures}, introduced by De Bartolomeis-Tian in \cite{DeT}. The authors show that there is a bijection between bundle almost complex structures and the pseudoholomorphic structures on $E$ (also see Proposition \ref{bacs=ps}). We further observe, in Corollary \ref{holo}, that a section in the kernel of a pseudoholomorphic structure $\bar{\p}_E$ is exactly a pseudoholomorphic section with respect to the bundle almost complex structure $\mathcal J$ corresponding to $\bar{\p}_E$.

With these two equivalent descriptions understood, we are able to give our definition of $(E, \mathcal J)$-genus and the definition of Iitaka dimension, as well as their special cases - the plurigenera and  the Kodaira dimension in the end of Section \ref{defIita}.

\begin{defn}\label{Iv1}
Let $E$ be a complex vector bundle with bundle almost complex structure $\mathcal J$ over an almost complex manifold $(X,J)$. The $(E, \mathcal J)$-genus is defined as $$P_{E, \mathcal J}:=\dim H^0(X, (E, \mathcal J)),$$ where $H^0(X, (E, \mathcal J))$ denotes the space of $(J,\mathcal J)$ pseudoholomorphic sections. The $m^{th}$ plurigenus of $(X,J)$ is defined to be $P_m(X, J)=\dim H^0(X, \mathcal K^{\otimes m})$.

Let $L$ be a complex line bundle with bundle almost complex structure $\mathcal J$ over $(X, J)$. The Iitaka dimension $\kappa^J(X, (L, \mathcal J))$ is defined as
$$\kappa^J(X, (L, \mathcal J))=\begin{cases}\begin{array}{cl} -\infty, &\ \text{if} \ P_{L^{\otimes m},\mathcal J}=0\  \text{for any} \ m\geq 0\\
\limsup_{m\rightarrow \infty} \dfrac{\log P_{L^{\otimes m},\mathcal J}}{\log m}, &\  \text{otherwise.}
\end{array}\end{cases}$$

The Kodaira dimension $\kappa^J(X)$ is defined by choosing $L=\mathcal K$ and $\mathcal J$ to be the bundle almost complex structure induced by $\bar{\p}$.
\end{defn}

The advantage to have the second description is that the intersection theory of almost complex submanifolds developed by the second author in \cite{Z2} can come into play. The theory works particularly well when the base manifold $(X, J)$ is of dimension $4$. In this situation, the zero locus of a pseudoholomorphic section is a $J$-holomorphic curve in the first Chern class of the complex bundle $E$. With this understood, the rich theory of pseudoholomorphic curves are in our armory.

As we mentioned above, the plurigenera and thus the Kodaira dimension are classical birational invariants. We certainly expect it is true for our $P_m(X)$ and $\kappa^J(X)$.
As suggested by Theorem 1.5 in \cite{Z2}, a degree one pseudoholomorphic map is the right notion of birational morphism in the almost complex setting, at least in dimension $4$. The next result, as a combination of Theorems \ref{Kodbir} and \ref{hodgebir}, confirms the birational invariance of plurigenera, Kodaira dimension and the irregularity $h^{1,0}(X):=\dim H^{0}(X, \Omega^p(\mathcal O))$.

\begin{thm}\label{birintro}
Let $u: (X, J_X)\rightarrow (Y, J_Y)$ be a degree one pseudoholomorphic map between closed almost complex $4$-manifolds. Then $P_m(X, J_X)=P_m(Y, J_Y)$ and thus $\kappa^{J_X}(X)= \kappa^{J_Y}(Y)$. Moreover, the Hodge number $h^{1,0}(X)=h^{1,0}(Y)$.
\end{thm}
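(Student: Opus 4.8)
The plan is to reduce the theorem to a study of the structure of a degree one pseudoholomorphic map $u:(X,J_X)\to(Y,J_Y)$ between closed almost complex $4$-manifolds, and then to compare spaces of pseudoholomorphic sections of $\mathcal K_X^{\otimes m}$ and $\mathcal K_Y^{\otimes m}$ via pullback. First I would recall from \cite{Z2} (Theorem 1.5) the local and global picture of such a map: a degree one pseudoholomorphic map is, outside a finite set of points and curves, a biholomorphism in the almost complex sense, and the exceptional set is a union of $J_X$-holomorphic curves contracted by $u$. Thus $u$ induces an isomorphism $u^*:H^0(Y,\mathcal K_Y^{\otimes m})\to H^0(X\setminus Z,\mathcal K_X^{\otimes m})$ where $Z$ is the exceptional locus, and the problem becomes an extension problem: every pseudoholomorphic section of $\mathcal K_X^{\otimes m}$ on $X\setminus Z$ extends across $Z$, and conversely every section on $X$ restricts and pushes forward. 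The reverse containment $P_m(X)\le P_m(Y)$ should come from the fact that $u$ is degree one, hence generically injective, so a section of $\mathcal K_X^{\otimes m}$ on the open dense set where $u$ is a local diffeomorphism descends to $Y\setminus u(Z)$ and then extends.

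The crucial analytic ingredient is the Hartogs-type extension theorem in the almost complex category that the introduction advertises (proved "by the foliation-by-disks technique"). Concretely, I would argue: a $(J_X,\mathcal J)$-pseudoholomorphic section of $\mathcal K_X^{\otimes m}$ defined on the complement of a $J_X$-holomorphic curve $C$ (or a point) in a $4$-manifold, which is bounded near $C$, extends pseudoholomorphically across $C$. Boundedness near the exceptional curves is where the canonical bundle plays its special role: on the resolution side, the adjunction-type relation $\mathcal K_X = u^*\mathcal K_Y \otimes \mathcal O(E)$ for the exceptional divisor $E$ (with $E$ effective and $u$-contracted) forces pulled-back canonical sections to vanish along $E$ to the appropriate order, so no pole is introduced; this is exactly the mechanism that makes plurigenera — rather than sections of a general line bundle — birationally invariant. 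I would make this precise using the intersection theory of \cite{Z2}: the zero locus of a pseudoholomorphic section of $\mathcal K_X^{\otimes m}$ is a $J_X$-holomorphic curve in $mc_1(\mathcal K_X)$, and positivity of intersections with the contracted curves controls the order of vanishing.

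For the irregularity statement $h^{1,0}(X)=h^{1,0}(Y)$, I would run the same contraction-and-extension argument for pseudoholomorphic $(1,0)$-forms (sections of $\Omega^1(\mathcal O)$) rather than pluricanonical sections. Here the extension is cleaner: a bounded holomorphic $1$-form on the complement of a curve or point in a surface extends, and pullback by a degree one pseudoholomorphic map identifies $H^0(Y,\Omega^1_{J_Y}(\mathcal O))$ with its image in $H^0(X,\Omega^1_{J_X}(\mathcal O))$, while contracted curves being rational (they have negative self-intersection in the exceptional set) carry no holomorphic $1$-forms, so nothing is lost in the other direction.

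The main obstacle I anticipate is the extension step across the exceptional $J_X$-holomorphic curves: unlike the integrable case, one cannot invoke Hartogs or Riemann-extension theorems directly, and the pseudoholomorphic sections are only solutions of a first-order elliptic system with non-constant coefficients, so one must genuinely use the foliation-by-pseudoholomorphic-disks construction to propagate the section across the singular locus and then check that the extended object still lies in the kernel of $\bar\p_{\mathcal K^{\otimes m}}$. A secondary difficulty is bookkeeping the multiplicities: degree one pseudoholomorphic maps in dimension $4$ can have fairly complicated exceptional fibers (trees of $J$-holomorphic spheres), and one needs the intersection-theoretic input from \cite{Z2} to guarantee that the adjunction relation holds with the correct coefficients so that pulled-back pluricanonical sections remain bounded.
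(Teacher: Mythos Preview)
Your overall strategy—pullback is injective, then show surjectivity by pushing sections down and extending—is correct, and you even state the right step in one sentence (``descends to $Y\setminus u(Z)$ and then extends''). But you then misplace where the extension actually occurs, and this leads you to propose machinery that the argument does not need.

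The structural input from \cite{Z2} (Theorem~1.5) is that there is a \emph{finite set} $Y_1\subset Y$ with $u:X\setminus u^{-1}(Y_1)\to Y\setminus Y_1$ a diffeomorphism. Given $\sigma\in H^0(X,\mathcal K_X^{\otimes m})$, one pushes it down via $(u^{-1})^*$ to $H^0(Y\setminus Y_1,\mathcal K_Y^{\otimes m})$, and the extension problem is to cross the finitely many \emph{points} of $Y_1$. This is exactly the almost complex Hartogs theorem (Theorem~\ref{hartogs}): a pseudoholomorphic section of any $(E,\mathcal J)$ over $X\setminus\{p\}$ extends over $p$, with \emph{no} boundedness hypothesis—it is a genuine codimension-two phenomenon, proved by Cauchy integrals along the disks of a $J$-fiber-diffeomorphism. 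Once the extended $\tau\in H^0(Y,\mathcal K_Y^{\otimes m})$ is in hand, $u^*\tau$ is defined on all of $X$ (since $u$ is) and agrees with $\sigma$ on the dense set $X\setminus u^{-1}(Y_1)$, hence equals $\sigma$.

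Consequently your second paragraph is aimed at a non-problem. There is no extension across exceptional \emph{curves} in $X$ to perform; no adjunction relation $\mathcal K_X=u^*\mathcal K_Y\otimes\mathcal O(E)$ is needed (and making sense of $\mathcal O(E)$ as a bundle with pseudoholomorphic structure would itself require work); no vanishing orders or boundedness estimates enter. The canonical bundle plays no special role: the identical argument gives $h^{p,0}(X)=h^{p,0}(Y)$ for all $p$ (Theorem~\ref{hodgebir}), so in particular the irregularity statement needs neither the rationality of contracted curves nor any restriction argument on them. Note also that your first-paragraph claim that $u^*:H^0(Y,\mathcal K_Y^{\otimes m})\to H^0(X\setminus Z,\mathcal K_X^{\otimes m})$ is an isomorphism already \emph{presupposes} Hartogs on $Y$ (the natural target is $H^0(Y\setminus Y_1,\mathcal K_Y^{\otimes m})$); once you grant that, the further ``extension across $Z$'' you propose is redundant. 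The obstacle you anticipate in your final paragraph therefore never arises.
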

The most essential ingredient is to establish the desired  Hartogs extension theorem in the almost complex setting, which certainly has its independent interest. It is only established in dimension $4$ by the foliation-by-disks technique (see {\it e.g.} \cite{T96, Z2}). It is Theorem \ref{hartogs} which we reproduce in the following.

\begin{thm}\label{hartogsintro}
Let $(E, \mathcal J)$ be a complex vector bundle with a bundle almost complex structure over the almost complex $4$-manifold $(X, J)$, and $p\in X$. Then any section in $H^0(X\setminus p, (E, \mathcal J)|_{X\setminus p})$ extends to a section in $H^0(X, (E, \mathcal J))$.
\end{thm}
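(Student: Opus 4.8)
The statement is local at $p$ and empty away from it, so it suffices to produce a pseudoholomorphic extension of the given section across $p$ on a small ball $B\ni p$ over which $E$ is trivial, and then glue it to the original section on $X\setminus p$. Fixing a smooth frame of $E$ over $B$, a pseudoholomorphic section over $B$ (resp.\ $B\setminus p$) becomes a smooth $\mathbb{C}^r$-valued map $u$ satisfying $\bar\partial u+Au=0$, where $A$ is the smooth $\mathrm{End}(\mathbb{C}^r)$-valued $(0,1)$-form recording $\bar\partial_E$ in the frame (Leibniz rule) and $\bar\partial$ is the $(0,1)$-part of $d$ with respect to $J$. So I must show: a solution of this linear first order system on $B\setminus p$ extends, as a solution, across $p$. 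Here I would invoke the foliation-by-disks technique (this is the only place where dimension $4$ is used; cf.\ \cite{T96, Z2}): choose a local foliation of a neighborhood of $p$ by $J$-holomorphic disks, i.e.\ a diffeomorphism $\Phi\colon\Delta\times\Delta\to U\subseteq B$ with $\Phi(0,0)=p$, each slice $\Phi(\cdot,t)\colon\Delta\to U$ a $J$-holomorphic embedding, and $\Phi^{-1}(p)=\{(0,0)\}$ (leaves of a foliation being disjoint, $p$ lies on a single leaf, and for $t\neq 0$ that leaf, together with the circle $\Phi(\{|\eta|=\rho\}\times\{t\})$, stays off $p$). Such foliations near a point of a smooth almost complex $4$-manifold are classical, assembled from Nijenhuis--Woolf local pseudoholomorphic disks. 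Setting $v:=u\circ\Phi$, smooth on $(\Delta\times\Delta)\setminus\{(0,0)\}$ in the pulled-back frame, the $J$-holomorphicity of the slices makes $\Phi^*\bar\partial_E$ restrict on each slice to a one-variable $\bar\partial$-operator, so $v(\cdot,t)$ solves $\partial_{\bar\zeta}v(\cdot,t)=-B_t(\zeta)\,v(\cdot,t)$ for a matrix $B_t$ that is smooth jointly in $(\zeta,t)$ on all of $\Delta\times\Delta$ (it encodes $A$ and $\Phi$ only, and does not see the puncture).

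Next I would build the extension directly by a parametrized Cauchy--Pompeiu integral equation. Fix a concentric subdisk $\Delta_\rho=\{|\zeta|<\rho\}$ with $\rho$ so small that $4\rho\sup_B|A|<1$, and set $h_t(\zeta):=\tfrac{1}{2\pi i}\oint_{|\eta|=\rho}\tfrac{v(\eta,t)}{\eta-\zeta}\,d\eta$ and $T_\rho[g](\zeta):=\tfrac1\pi\iint_{|\eta|<\rho}\tfrac{g(\eta)}{\eta-\zeta}\,dA(\eta)$. Since $\{|\eta|=\rho\}\times\Delta$ misses $\Phi^{-1}(p)$, the datum $h_t(\zeta)$ is defined and depends smoothly on $(\zeta,t)\in\Delta_\rho\times\Delta$, and $\|T_\rho\|_{C\to C}\le 4\rho$; hence for each fixed $t$ the map $w\mapsto h_t+T_\rho[B_t w]$ is a contraction on $C(\overline{\Delta_\rho};\mathbb{C}^r)$, and I let $\tilde v(\cdot,t)$ be its unique fixed point. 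For $t\neq 0$ the slice avoids $p$, so $v(\cdot,t)$ is smooth up to $|\zeta|=\rho$ and the Cauchy--Pompeiu formula
\[ v(\zeta,t)=\frac{1}{2\pi i}\oint_{|\eta|=\rho}\frac{v(\eta,t)}{\eta-\zeta}\,d\eta-\frac1\pi\iint_{|\eta|<\rho}\frac{\partial_{\bar\eta}v(\eta,t)}{\eta-\zeta}\,dA(\eta) \]
shows $v(\cdot,t)=h_t+T_\rho[B_t v(\cdot,t)]$, whence $\tilde v(\cdot,t)=v(\cdot,t)$ for every $t\neq 0$ by uniqueness. Because $h_t$ and $B_t$ are jointly smooth, smooth dependence of the fixed point makes $\tilde v$ smooth on $\Delta_\rho\times\Delta$. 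Finally $\Delta_\rho\times(\Delta\setminus\{0\})$ is dense in $(\Delta_\rho\times\Delta)\setminus\{(0,0)\}$, where $v$ and $\tilde v$ are continuous and coincide on that dense set, so they coincide on all of $(\Delta_\rho\times\Delta)\setminus\{(0,0)\}$, in particular on the punctured central slice. Consequently the smooth $(0,1)$-form $\Phi^*(\bar\partial u+Au)$ built from $\tilde v$ vanishes off $(0,0)$, hence identically, so $\tilde u:=\tilde v\circ\Phi^{-1}$ is a smooth pseudoholomorphic section on $U':=\Phi(\Delta_\rho\times\Delta)$ agreeing with $u$ on $U'\setminus p$; gluing $\tilde u$ with $u$ on $X\setminus p$ yields the extension to $X$. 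Note that no separate removable-singularity theorem is required: $\tilde v$ is smooth by construction.

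The step I expect to demand the most care is the foliation itself — arranging a genuine $J$-holomorphic foliation near $p$, controlling the joint smoothness of the slice-wise operators $\partial_{\bar\zeta}v(\cdot,t)=-B_t v(\cdot,t)$, and keeping the boundary circles $\Phi(\{|\eta|=\rho\}\times\{t\})$ uniformly off $p$ — since this is precisely where the $4$-dimensional hypothesis enters and where one must rely on the technique of \cite{T96, Z2}. Granting that, the contraction estimate (choosing $\rho$ small) and the density-and-continuity matching are routine.
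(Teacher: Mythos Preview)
Your argument is correct and takes a genuinely different route from the paper's. Both proofs rest on the same foundation --- the Taubes $J$-fiber-diffeomorphism, which is indeed where dimension~$4$ enters --- but the analysis after that point differs substantially.

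The paper first chooses a special trivialization of $E$ over $U$ making the restriction of $\bar\partial_E$ to each leaf $f(D_w)$ and to the transverse disk $f(0\times D)$ the standard $\bar\partial$, so the slice equations are literally $\partial_{\bar\zeta}v=0$. It then reads off the Laurent coefficients $a_j(z_2)=\tfrac{1}{2\pi i}\oint_{|\xi|=\rho}\xi^{-j-1}s(\xi,z_2)\,d\xi$, argues by continuity that the negative ones vanish at $z_2=0$, and so extends $s$ along the central leaf and the transverse disk. To upgrade this to a full pseudoholomorphic extension, the paper then \emph{rotates the foliation}: it covers a neighborhood of $p$ by finitely many families of $J$-holomorphic disks through $p$ whose tangent directions sweep $\mathbb{CP}^1$, checks the extended values all agree, and concludes holomorphicity in every complex direction.

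You bypass both the special trivialization and the rotation step. Working in an arbitrary frame, you carry the zeroth-order term $B_t$ and solve the slice equation by a Cauchy--Pompeiu contraction on a small disk; the parametric fixed point $\tilde v$ is then jointly smooth on $\Delta_\rho\times\Delta$, agrees with $v$ for $t\neq 0$ by uniqueness, hence on the whole punctured chart by density, and the full equation $\bar\partial_E\tilde u=0$ follows by continuity of a smooth form vanishing off a point. This is cleaner: smoothness and pseudoholomorphicity come in one stroke, with no directional bookkeeping. The paper's version, by contrast, uses nothing beyond the classical Cauchy integral, at the cost of the extra $\mathbb{CP}^1$-covering argument.

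The one place in your writeup that deserves a sentence more is the claim that ``smooth dependence of the fixed point makes $\tilde v$ smooth on $\Delta_\rho\times\Delta$''. Smooth dependence of $t\mapsto\tilde v(\cdot,t)$ in $C^0$ is immediate from the contraction; to get joint $C^\infty$, either bootstrap using that $T_\rho$ gains a derivative in $\zeta$ (so $\tilde v(\cdot,t)\in C^\infty$ for each $t$) and then differentiate the fixed-point equation in $t$, or note that $(I-T_\rho[B_t\,\cdot\,])^{-1}$ depends smoothly on $t$ as a bounded operator on each $C^{k,\alpha}$ and apply it to the smooth datum $h_t$. Either way this is routine, but worth stating since the whole argument rests on $\tilde v$ being smooth at $(0,0)$.
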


The next step is to study the property of plurigenera under deformation of almost complex structures. For projective manifolds, the plurigenera are invariant under projective deformation. On complex surfaces, the plurigenera (hence the Kodaira dimension) are even diffeomorphism invariants \cite{FM, FQ}, although it is no longer true when the complex dimension is greater than $2$ (see \cite{R}). Moreover, the irregularity of a complex surface is a homotopy invariant.

By virtue of our Hodge theoretic description of plurigenera, they are upper semi-continuous functions under smooth deformation. However, it is easy to see that the dimensions could jump. When we deform an integrable almost complex structure of a surface of general type, a generic perturbed almost complex structure does not admit any pseudoholomorphic curve, while as mentioned above the zero locus of a non-trivial pseudoholomorphic section of a pluricanonical bundle is a pseudoholomorphic curve in the class $mK$. This argument itself does not exclude the possibility of invariance when the canonical class is torsion. In Section 6, we construct some explicit deformations on Kodaira-Thurston surface and $4$-torus, and show that the plurigenera, the Kodaira dimension and the irregularity are not constant under smooth deformation even when the canonical class is trivial.

Also in Section 6, we study the relation between non-integrability of almost complex structures and the Kodaira dimension. Namely, we search the possible values of Kodaira dimension if the almost complex structure is non-integrable. By applying the Riemann-Roch formula and the almost complex K\"unneth formula, we prove the following result (Theorem \ref{niKod}).

\begin{thm}
For every $k\in\{-\infty, 0, 1, \cdots, n-1\}$, $n\geq 2$, there are examples of compact 2n-dimensional non-integrable almost complex
manifolds with $\kappa^J=k$.
\end{thm}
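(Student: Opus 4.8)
\emph{Reduction to dimension four.} The plan is to reduce first to complex dimension $2$ by taking products. For compact almost complex manifolds $(X_1,J_1)$ and $(X_2,J_2)$, the almost complex K\"unneth formula gives $H^0(X_1\times X_2,\mathcal K^{\otimes m})\cong H^0(X_1,\mathcal K^{\otimes m})\otimes H^0(X_2,\mathcal K^{\otimes m})$ for the product almost complex structure, so $P_m(X_1\times X_2)=P_m(X_1)\,P_m(X_2)$ and $\kappa^{J_1\times J_2}(X_1\times X_2)=\kappa^{J_1}(X_1)+\kappa^{J_2}(X_2)$ (with $-\infty+c=-\infty$). Since for every $d\ge1$ and every $\ell\in\{-\infty,0,\dots,d\}$ there is a smooth projective $d$-fold $V^{(d)}_\ell$ of Kodaira dimension $\ell$ ($\mathbb{CP}^d$ for $\ell=-\infty$, an abelian $d$-fold for $\ell=0$, and the product of $\ell$ genus-$\ge2$ curves with an abelian $(d-\ell)$-fold for $1\le\ell\le d$), it then suffices to produce compact non-integrable almost complex $4$-manifolds $M_{-\infty},M_0,M_1$ with $\kappa^J=-\infty,0,1$ respectively: for $n\ge2$ and $k\in\{-\infty,0,\dots,n-1\}$ one takes
\[
X=\begin{cases}M_{-\infty}\times V^{(n-2)}_0,& k=-\infty,\\[3pt] M_0\times V^{(n-2)}_k,& 0\le k\le n-2,\\[3pt] M_1\times V^{(n-2)}_{n-2},& k=n-1,\end{cases}
\]
(where a $0$-fold is a point), which is a non-integrable almost complex $2n$-manifold of Kodaira dimension $k$ by the additivity above. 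For $k=n-1$ the factor $M_1$ is needed, because a complex factor of dimension $n-2$ contributes Kodaira dimension at most $n-2$.

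\emph{The case $\kappa^J=-\infty$.} For $M_{-\infty}$ I would take $\mathbb{CP}^2$ with a small non-integrable perturbation $J$ of the Fubini--Study complex structure, still tamed by the Fubini--Study form $\omega$. For $m\ge1$, $\mathcal K^{\otimes m}$ is not topologically trivial ($c_1=-3mH$, with $H$ the hyperplane class), so a pseudoholomorphic section of it would have to vanish along a non-empty $J$-holomorphic curve Poincar\'e dual to $-3mH$, of $\omega$-area $-3m\langle[\omega],H\rangle<0$; this is impossible. Hence $P_m(M_{-\infty})=0$ for all $m\ge1$, and $\kappa^J(M_{-\infty})=-\infty$.

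\emph{The cases $\kappa^J=0$ and $\kappa^J=1$.} I would build both examples by the same twisting construction. Fix a $2$-torus $E$ with flat coordinates $(x,y)$, let $\Sigma$ be an elliptic curve (for $M_0$) or a curve of genus $g\ge2$ (for $M_1$), and choose a non-constant smooth function $\sigma\colon\Sigma\to\mathbb R_{>0}$. On $M:=E\times\Sigma$ let $J$ be the almost complex structure whose $(1,0)$-forms are spanned by $\eta=dx+i\sigma\,dy$ together with the pull-backs of the $(1,0)$-forms of $\Sigma$; then $J$ is non-integrable precisely because $\sigma$ is non-constant, and $\pi\colon M\to\Sigma$ is pseudoholomorphic with $2$-torus fibers. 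Since $\mathcal K_M^{\otimes m}$ restricts to each fiber as $\mathcal O$ tensored with a constant line, a pseudoholomorphic section of $\mathcal K_M^{\otimes m}$ is constant along fibers, so $H^0(M,\mathcal K_M^{\otimes m})\cong H^0(\Sigma,\pi_*\mathcal K_M^{\otimes m})$ with $\pi_*\mathcal K_M^{\otimes m}=\mathcal K_\Sigma^{\otimes m}\otimes N_m$, where $N_m:=\pi_*\mathcal K_{M/\Sigma}^{\otimes m}$ is a holomorphic line bundle whose underlying smooth bundle is trivialized by $\eta^{\otimes m}$. One checks that the $(0,1)$-connection form of $\eta^{\otimes m}$ in this trivialization is $-\tfrac{m}{2}\bar\partial\log\sigma$, which is $\bar\partial$-exact; hence $N_m\cong\mathcal O$, and in particular $\deg N_m=0$. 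For $M_0$, $\mathcal K_\Sigma$ is trivial, so $P_m(M_0)=h^0(\Sigma,\mathcal O)=1$ for all $m$ and $\kappa^J(M_0)=0$. For $M_1$, $\deg(\mathcal K_\Sigma^{\otimes m}\otimes N_m)=m(2g-2)>2g-2$ for $m\ge2$, so Riemann--Roch on $\Sigma$ gives $P_m(M_1)=(2m-1)(g-1)$, which grows linearly in $m$; thus $\kappa^J(M_1)=1$.

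\emph{Where the difficulty lies.} Once $M_{-\infty},M_0,M_1$ are available the theorem follows from the products above, so the real work is four-dimensional: (i) setting up, within the almost complex Hodge theory of the earlier sections, the push-forward isomorphism $H^0(M,\mathcal K_M^{\otimes m})\cong H^0(\Sigma,\pi_*\mathcal K_M^{\otimes m})$ for a pseudoholomorphic torus fibration --- the mechanism being that a pseudoholomorphic section of a line bundle restricting trivially to a compact connected fiber is constant there; (ii) identifying $\pi_*\mathcal K_M^{\otimes m}$ as a degree-$0$ twist of $\mathcal K_\Sigma^{\otimes m}$ and, for $M_0$, confirming $N_m\cong\mathcal O$ via exactness of its $\bar\partial$-connection form; and (iii) the vanishing $P_m(M_{-\infty})=0$, which uses that the zero locus of a pseudoholomorphic pluricanonical section is a $J$-holomorphic curve in the class $m\,c_1(\mathcal K)$ together with positivity of $\omega$-area of $J$-holomorphic curves. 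Riemann--Roch --- the Atiyah--Singer index of $\bar\partial_{\mathcal K^{\otimes m}}$, which is insensitive to integrability --- enters in (ii) and is consistent with all these counts; I expect (i), making the push-forward rigorous in the non-integrable category, to be the main technical point.
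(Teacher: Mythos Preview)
Your overall strategy matches the paper's: reduce to complex dimension $2$ via the almost complex K\"unneth formula (Proposition~\ref{Kun} and its corollary on additivity of $\kappa^J$), then exhibit non-integrable $4$-manifolds with $\kappa^J=-\infty,0,1$ and take products with suitable complex curves. Your $M_1$ construction --- a twisted almost complex structure on $T^2\times S$ with $g(S)\ge 2$, analyzed by showing pluricanonical sections are fiberwise constant and then applying Riemann--Roch on $S$ --- is essentially the same as the paper's Proposition~\ref{4}, with a slightly different (and in fact simpler) choice of twist.

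Where you differ is in the $\kappa^J=-\infty$ and $\kappa^J=0$ building blocks. The paper uses the Kodaira--Thurston surface $S^1\times(\Gamma\backslash\Nil^3)$ with the family $J_a$ of Section~\ref{kt}: for $a\notin\pi\mathbb Q$ one gets $\kappa^{J_a}=-\infty$, and for $a\in\pi\mathbb Q\setminus\{0\}$ one gets $\kappa^{J_a}=0$ (Proposition~\ref{6.1}), so a single family handles both cases. You instead take a tamed non-integrable perturbation of $\mathbb{CP}^2$ for $-\infty$ (using that the zero locus of a pluricanonical section is a $J$-curve in the negative class $mK$, hence empty) and the same $T^2\times\Sigma$ fibration construction with $\Sigma$ elliptic for $0$, checking that the induced degree-$0$ twist $N_m$ is holomorphically trivial because its connection form $\tfrac{m}{2}\bar\partial\log\sigma$ is $\bar\partial$-exact. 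Both routes are correct. The paper's choice is more self-contained (Fourier analysis on KT, no need for the zero-locus result from \cite{Z2}); yours has the virtue of unifying the $\kappa^J=0$ and $\kappa^J=1$ cases under one fibration mechanism, and the $\mathbb{CP}^2$ argument is a clean illustration of how tameness forces $P_m=0$ when $K$ pairs negatively with a symplectic form.
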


We point out that the above range of Kodaira dimension for non-integrable almost complex structure is optimal. More precisely, we will show that if the Kodaira dimension equals the complex dimension of the manifold, then the almost complex structure must be integrable. This will appear in the second paper.

In the last section of the paper, we compute the Hodge numbers, the plurigenera and the Kodaira dimension on the six sphere $S^6$. It is classically known that there exist almost complex structures on $S^6$ \cite{Eh}. A standard construction is to use the cross product of $\mathbb R^7$ applying to the tangent space of $S^6$. Denote this standard almost complex structure by $\mathsf J$. In Theorem \ref{sphere}, we prove that

\begin{thm}
For the standard almost complex structure $\mathsf J$ on $S^6$, the following hold: (1) $h^{1,0}=h^{2,0}=h^{2,3}=h^{1,3}=0$; (2) $P_m(S^6, \mathsf J)=1$ for any $m\geq 1$ and $\kappa^{\mathsf J}=0$.
\end{thm}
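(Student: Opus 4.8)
The plan is to compute the two pieces separately, using the Hodge-theoretic machinery of the earlier sections together with explicit knowledge of the standard almost complex structure $\mathsf{J}$ on $S^6$. First I would set up coordinates: realize $S^6$ as the unit sphere in the imaginary octonions $\mathrm{Im}\,\mathbb{O}\cong\mathbb{R}^7$, with $\mathsf{J}_x(v)=x\times v$ the cross-product almost complex structure. The crucial structural fact I would extract is that $\mathsf{J}$ is \emph{never integrable} (its Nijenhuis tensor is nowhere zero) and, more importantly, that it admits a large symmetry group: $G_2$ acts transitively on $S^6$ preserving $\mathsf{J}$, so $\mathsf{J}$ is homogeneous. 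This homogeneity is what makes the harmonic-forms computation tractable — any $\bar{\p}$-harmonic form with respect to a $G_2$-invariant Hermitian metric can be averaged over $G_2$, so $\mathcal{H}_{\bar{\p}}^{(p,q)}$ is computed from $G_2$-invariant forms, which is a finite-dimensional linear-algebra problem on the isotropy representation of $SU(3)\subset G_2$ on $\Lambda^{p,q}(\mathbb{C}^3)^*$.

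For part (1), the Hodge numbers $h^{1,0}, h^{2,0}, h^{2,3}, h^{1,3}$: by Serre duality (Proposition \ref{Serre}) I have $h^{2,3}=h^{1,0}$ and $h^{1,3}=h^{2,0}$ (up to the correct index bookkeeping for $n=3$), so it suffices to show $h^{1,0}=h^{2,0}=0$. For $h^{1,0}=\dim H^0(S^6,\Omega^1(\mathcal O))$: a $\bar\p$-closed $(1,0)$-form is, after $G_2$-averaging, a $G_2$-invariant $(1,0)$-form; but $S^6=G_2/SU(3)$ and the isotropy representation on $(\mathbb{C}^3)^*$ is the standard irreducible $SU(3)$-representation, which has no trivial summand, so there are no nonzero invariant $(1,0)$-forms — hence $h^{1,0}=0$. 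The same argument applies to $\Lambda^{2,0}(\mathbb{C}^3)^*\cong\overline{(\mathbb{C}^3)^*}$ (again the standard representation, up to conjugation), giving $h^{2,0}=0$. I should double-check the precise definition of $\mathcal{H}_{\bar\p_E}^{(p,q)}$ used in the paper and confirm that harmonicity plus $G_2$-invariance really forces the form to be parallel, or at least that the averaging projection lands in the invariants — this uses that the chosen metric and $\mathsf{J}$ are $G_2$-invariant so the Laplacian commutes with the group action.

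For part (2), the plurigenera: here the relevant bundle is $\mathcal{K}=\mathcal{K}_{S^6,\mathsf{J}}=\Lambda^{3,0}T^*S^6$, a \emph{complex line bundle}. Since $c_1(S^6)=0$ (indeed $H^2(S^6;\mathbb{Z})=0$), the canonical bundle and all its powers $\mathcal{K}^{\otimes m}$ are topologically trivial, so $H^0(S^6,\mathcal{K}^{\otimes m})$ can be identified with the space of $\bar\p$-closed functions $f$ for an appropriate pseudoholomorphic structure $\bar\p_{\mathcal{K}^{\otimes m}}$, i.e. with $\mathcal{H}_{\bar\p_{\mathcal{K}^{\otimes m}}}^{(0,0)}(S^6,\mathcal{K}^{\otimes m})$. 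The $G_2$-invariant trivializing section of $\mathcal{K}^{\otimes m}$ is pseudoholomorphic (its $\bar\p$ is $G_2$-invariant, lands in an isotropy representation with no trivial part for $m\neq 0$, hence vanishes), so $P_m\geq 1$. For the reverse inequality $P_m\leq 1$: dividing any pseudoholomorphic section by the invariant one reduces to showing every global solution of $\bar\p_{\mathcal O}u=0$ on the compact $S^6$ (for the trivial pseudoholomorphic structure twisted by the flat connection) is constant — this follows from Theorem 1.1 / the maximum principle: $\bar\p u=0$ makes $|u|^2$ subharmonic for a suitable metric, or more directly $u$ is a pseudoholomorphic function on a compact almost complex manifold and hence constant. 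Once $P_m=1$ for all $m\geq 1$, the Iitaka-dimension formula in Definition \ref{Iv1} gives $\kappa^{\mathsf{J}}=\limsup_m \frac{\log 1}{\log m}=0$.

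The main obstacle I anticipate is making the $G_2$-averaging argument fully rigorous at the level of \emph{harmonic} forms rather than just cohomology classes: I must confirm that the Hodge decomposition of Section (the Hodge theory part) is compatible with the isometric $G_2$-action, so that harmonic representatives can be taken invariant, and then carefully identify the isotropy $SU(3)$-representations on $\Lambda^{p,q}$ to verify the absence of trivial summands in each needed bidegree. A secondary subtlety is the precise bundle-almost-complex-structure/pseudoholomorphic-structure on $\mathcal{K}^{\otimes m}$ and checking that the invariant section is genuinely in its kernel — this is where I'd lean on Corollary \ref{holo} identifying kernel elements of $\bar\p_E$ with $\mathcal{J}$-pseudoholomorphic sections, and on the explicit formula for $\bar\p$ on the canonical bundle of an almost complex manifold.
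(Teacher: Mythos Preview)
Your argument for part (2) is essentially correct and parallels the paper's: both produce a $G_2$-invariant nowhere-vanishing section $\Phi$ of $\mathcal K$, show it is $\bar\partial$-closed, and then use the maximum principle to conclude that any pseudoholomorphic section of $\mathcal K^{\otimes m}$ is a constant multiple of $\Phi^{\otimes m}$. The paper verifies $\bar\partial\Phi=0$ by an explicit Lie-algebra computation on $G_2$; your representation-theoretic shortcut (that $\bar\partial\Phi$ is a $G_2$-invariant section of a bundle whose isotropy $SU(3)$-representation has no invariants) is a legitimate alternative.

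Your argument for part (1), however, has a genuine gap. You assert that because $G_2$ acts isometrically and commutes with the Laplacian, the harmonic space $\mathcal H_{\bar\partial}^{(p,0)}$ ``is computed from $G_2$-invariant forms,'' and then conclude $h^{1,0}=h^{2,0}=0$ from the absence of $SU(3)$-invariants in $(\mathbb C^3)^*$ and $\Lambda^2(\mathbb C^3)^*$. This inference is invalid. The $G_2$-action makes $H^0(S^6,\Omega^p)$ into a finite-dimensional $G_2$-representation, but the absence of \emph{invariant} vectors only says the trivial representation does not appear as a summand; it does not force the whole space to vanish (compare $SU(2)$ on $\mathbb C^2$). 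Averaging a nonzero $\bar\partial$-closed $(p,0)$-form over $G_2$ may simply yield zero. The analogy with de Rham cohomology of a compact homogeneous space fails here: $H^0(S^6,\Omega^p)$ is a space of \emph{sections}, not of cohomology classes, so there is no ``pass to an invariant representative'' step available.

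The paper avoids this by a direct computation: it pulls an arbitrary $\bar\partial$-closed $(p,0)$-form back along the pseudoholomorphic submersion $p:G_2\to S^6$, expresses it in a left-invariant coframe $\phi^1,\dots,\phi^7$, and uses the explicit structure equations of $\mathfrak g_2$ to obtain first-order PDE on the (non-constant) coefficient functions. Iterated use of specific bracket relations (e.g.\ $[\bar X_1,\bar X_2]$, $[\bar X_3,\bar X_5]$, $[X_3,\bar X_3]$) then forces all coefficients to vanish. To rescue your approach you would have to determine the full $G_2$-module structure of $H^0(S^6,\Omega^p)$---for instance via Frobenius reciprocity together with an analysis of how $\bar\partial$ acts on each isotypic component of $\Gamma(S^6,\Lambda^{p,0})$---which is considerably more than checking for invariants alone.
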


This calculation is somewhat surprising since it is generally believed that the Kodaira dimension of a hypothetical complex structure is $-\infty$. Our plurigenera distinguish $\mathsf J$ from hypothetical complex structures on $S^6$, since for the latter $P_1=h^{3, 0}=0$.

In paper II, we will interpret the Kodaira dimension through the pluricanonical map and discuss the significant geometric consequences. We will also investigate its comparison with the symplectic Kodaira dimension \cite{L} on symplectic 4-manifolds. Some vanishing theorems on positively-curved almost Hermitian manifolds will also be proved.\\

\textbf{Acknowledgements} The authors are kindly informed by Tian-Jun Li that he has a joint project with Gabriel La Nave on Kodaira dimension for almost K\"ahler manifolds with a totally different strategy. The first author would also like to thank Professors Bo Guan, Jiaping Wang and Fangyang Zheng for their encouragement and thank Xiaolan Nie for her support.

\section{Notations}
We start by fixing our notations and explain the natural pseudoholomorphic structure on the pluricanonical bundles.

Let $(X,J)$ be a $2n$-dimensional almost complex manifold. The complexification of the cotangent bundle of $X$ decomposes as $T^*X\otimes \mathbb C=(T^*X)^{1, 0}\oplus (T^*X)^{0,1}$ where $(T^*X)^{1,0}$ annihilates the subspace in $TX\otimes \mathbb C$ where $J$ acts as $-i$. A $(1,0)$-form is a smooth section of $(T^*X)^{1, 0}$; similarly for a $(0, 1)$-form. The splitting of the cotangent bundle induces a splitting of all exterior powers. Write $\Lambda^{p, q}X=\Lambda^p((T^*X)^{1, 0})\otimes \Lambda^q((T^*X)^{0, 1})$. Then for any $r\ge 0$, we have the decomposition $$\Lambda^rT^*X\otimes \mathbb C=\oplus_{p+q=r}\Lambda^{p, q}X.$$ Let $\pi^{p,q}$ be the projection to $\Lambda^{p, q}X$. A $(p, q)$-form is a smooth section of the bundle $\Lambda^{p, q}X$. The space of all such sections is denoted $\Omega^{p, q}(X)=\Gamma(X, \Lambda^{p,q})$.

The $\bar{\partial}$ and $\partial$ operator can be defined by:
$$\bar{\partial}=\pi^{p,q+1}\circ d: \Omega^{p,q}(X)\rightarrow \Omega^{p,q+1}(X)$$
$$\partial=\pi^{p+1,q}\circ d: \Omega^{p,q}(X)\rightarrow \Omega^{p+1,q}(X),$$
where $d$ is the exterior differential. Both $\bar{\partial}$ and $\partial$ satisfy the Leibniz rule, but in general $\bar{\partial}^2$ and $\partial^2$ may not be zero. They contain important information of almost complex structures. Apply $\bar{\p}$ to $\Lambda^{p, 0}$ and in particular $\mathcal K=\Lambda^{n,0}$, we have$$\bar{\partial}: \Lambda^{p,0}\rightarrow \Lambda^{p,1}\cong (T^*X)^{0,1} \otimes \Lambda^{p,0},$$
 $$\bar{\partial}: \mathcal K\rightarrow \Lambda^{n,1}\cong (T^*X)^{0,1} \otimes \mathcal K.$$ Here we write $\mathcal K$ (or any vector bundle) in short for any smooth sections of $\mathcal K$ (the vector bundle). We can extend the $\bar{\partial}$ to an operator $\bar{\partial}_m: \mathcal K^{\otimes m}\rightarrow (T^*X)^{0,1}\otimes \mathcal K^{\otimes m}$ for $ m\geq 2$ inductively by the product rule
$$\bar{\partial}_m(s_1\otimes s_2)=\bar{\partial}s_1\otimes s_2+s_1\otimes \bar{\partial}_{m-1} s_2.$$
It satisfies the Leibniz rule $\bar{\p}_m (fs)=\bar{\p}f\otimes s+f\bar{\p}_m s$ for any section $s$ of $\mathcal K^{\otimes m}$ and $\Lambda^{p,0}$.
\begin{defn}
The space of holomorphic sections of $\mathcal K^{\otimes m}$ is defined to be
$$H^0(X, \mathcal K^{\otimes m})=\{s\in \Gamma(X, \mathcal K^{\otimes m}): \bar{\partial}_m s=0\}.$$
\end{defn}
\begin{remk} The space $H^0(X, \mathcal K^{\otimes m})$ is categorical in the almost complex category. Indeed, if $(X', J')$ is another almost complex manifold which has a diffeomorphism $F: X'\rightarrow X$ to $(X,J)$ satisfying $dF\circ J'=J\circ dF$, we say that $(X', J')$ is \textit{pseudoholomorphic isomorphic} to $(X,J)$. Then $F^*(\Omega^{p,q}(X))=\Omega^{p,q}(X')$ and $F^*\circ \bar{\p}_J=\bar{\p}_{J'}\circ F^*$. So $s\in \Gamma(X, \mathcal K_J)$ satisfying $\bar{\p}s=0$ if and only if $\bar{\p}_{J'} F^* s=0$. Similar result holds on $\mathcal K_J^{\otimes m}$ where $F^*$ and $\bar{\p}$ are replaced by an isomorphism $F^*_m$ and the operator $\bar{\p}_m$. Therefore, $F$ induces an isomorphism $F^*_m: H^0(X, \mathcal K_J^{\otimes m})\rightarrow H^0(X',\mathcal K_{J'}^{\otimes m})$ for any $m\geq 1$.
\end{remk}

\section{Hodge theory on almost complex manifolds}

In this section, we will define Dolbeault cohomology groups for a complex bundle associated with a pseudoholomorphic structure. We show that they are finite dimensional when $X$ is compact in Theorem \ref{finite}. As a consequence, $H^0(X,\mathcal K^{\otimes m})$ is finite dimensional. We will follow the method of Hodge theory to define a formal adjoint operator of $\bar{\partial}_m$ and apply the elliptic theory.

Hodge theory is well developed on compact complex manifolds (see \cite{GH}, \cite{Huy}, \cite{V}, \cite{Z}). The Hodge theory on compact almost complex manifolds was initiated in \cite{Hir} for $(p,q)$ forms. To derive our results, we will set up the Hodge theory for $(p,q)$ forms with value in a Hermitian pseudoholomorphic bundle $E$. We then apply it to complex line bundles, in particular the bundle $\mathcal K^{\otimes m}$ and show that $H^0(X, \mathcal K^{\otimes m})$ is finite dimensional. One of our observations is that for a holomorphic vector bundle $E$ over a complex manifold, the $\bar{\p}$ operator on the holomorphic dual $E^*$ coincides with the $(0,1)$ part of a Hermitian connection on $\bar{E}$ when identifying $E^*$ with $\bar{E}$ by a Hermitian metric.

Choose a Riemannian metric $g$ on $X$ compatible with $J$, namely $g(Ju, Jv)=g(u,v)$ for any $v, w\in TX$. Then $g$ induces a Hermitian structure $h$ on $TX\otimes \mathbb{C}$ by $h=g-i\omega$, where $\omega(u, v)=g(Ju, v)$. Also, $h$ can be extended to Hermitian structures on the bundles $\Lambda^{p,q}X$ for any $(p, q)$ which we still denote by $h$. Explicitly, assume that $\{e_i\}$ is a local unitary frame in $TX\otimes \mathbb{C}$ and $\{\phi_i\}$ is the unitary coframe so that $$h=\sum^n_{i=1} \phi_i\otimes \bar{\phi}_i.$$ If $\alpha=\{i_1,i_2,\cdots, i_p\}, \beta=\{j_1,j_2,\cdots, j_q\}$ is any ordered set of $(p,q)$ multiindices, denote $\phi_{\alpha}=\phi_{i_1}\wedge\phi_{i_2}\wedge\cdots\wedge \phi_{i_p}$, $\bar{\phi}_{\beta}=\bar{\phi}_{j_1}\wedge\bar{\phi}_{j_2}\wedge\cdots\wedge \bar{\phi}_{j_p}$. Then $h$ on  $\Lambda^{p,q}$ is defined by letting $\{\phi_{\alpha}\wedge \bar{\phi}_{\beta}\}$ be orthogonal and $h(\phi_{\alpha}\wedge \bar{\phi}_{\beta}, \phi_{\alpha}\wedge \bar{\phi}_{\beta})=2^{p+q}$.

The $*$ operator on an almost Hermitian manifold is the unique $\mathbb{C}$-linear operator $$*: \Lambda^{p,q}\rightarrow \Lambda^{n-q,n-p}$$ satisfying \begin{align} h(\varphi_1, \varphi_2)dV=\varphi_1\wedge\overline{*\varphi_2}\end{align} where $dV$ is the volume form of $g$ and $\varphi_1, \varphi_2\in \Lambda^{p,q}.$

Using the unitary coframe $\{\phi_i\}$, we can write out the $*$ operator directly. Let $\hat{\alpha}, \hat{\beta}$ be the ordered set of the complement multi-indices of $\alpha, \beta$ in $\{1,2,\cdots,n\}$. As $\omega=i\sum_{i=1}^n \phi_i\wedge \bar{\phi}_i$ and $dV=\dfrac{\omega^n}{n!}$, if we define on the basis \begin{align} *(\phi_{\alpha}\wedge\bar{\phi}_{\beta})=2^{(p+q-n)}(-i)^n\epsilon_{\alpha\beta\hat{\beta}\hat{\alpha}} \phi_{\hat{\beta}}\wedge\bar{\phi}_{\hat{\alpha}} \end{align} where $\epsilon_{\alpha\beta\hat{\beta}\hat{\alpha}}$ is the sign of permutation of $$(i_1,\cdots, i_p, j_1,\cdots, j_q, \hat{j}_1,\cdots,\hat{j}_{n-q},\hat{i}_1,\cdots,\hat{i}_{n-p})\rightarrow (1,1',2,2'\cdots,n,n'),$$
then the operator of (2) satisfies (1). By uniqueness it gives the $*$ operator.

Define an inner product on $\Omega^{p,q}(X)$ by $\langle \varphi_1, \varphi_2\rangle=\int_X h(\varphi_1,\varphi_2)dV$ for $\varphi_1,\varphi_2 \in \Omega^{p,q}(X)$. Let $$\bar{\partial}^*=-*\partial*.$$ Then the following holds $$\langle \bar{\partial}\varphi_1, \varphi_2\rangle=\langle \varphi_1, \bar{\partial}^*\varphi_2\rangle.$$ The proof is the same with the integrable case, because the Leibniz rule holds and $\bar{\partial}=d$ acting on $A^{(n,n-1)}$.
Indeed, by (1), we have \begin{align*} \langle \bar{\partial}\varphi_1, \varphi_2\rangle&= \int_X h(\bar{\partial}\varphi_1,\varphi_2)dV=\int_X \bar{\partial}\varphi_1\wedge \overline{*\varphi_2}\\
&=\int_X \bar{\partial}(\varphi_1\wedge \overline{*\varphi_2})-(-1)^{p+q-1}\int_X \varphi_1\wedge \overline{\partial*\varphi_2}\\
&=\int_X \varphi_1\wedge \overline{*(\bar{\partial}^*\varphi_2)}\\
&=\int_X h(\varphi_1,\bar{\partial}^*\varphi_2)dV=\langle \varphi_1, \bar{\partial}^*\varphi_2\rangle,
\end{align*}
where we use the Stokes' theorem in the third line.

The above discussion produces the formal dual operator of $\bar{\p}$ on $\Omega^{p,q}(X)$. The next important step is to generalize this operator to any Hermitian bundle $E$ with a pseudoholomorphic structure.

\begin{defn} Let $(E,h_E)$ be a Hermitian vector bundle over $(X,J)$. A connection $\nabla: \Gamma(X,E)\rightarrow \Gamma(X, (T^*X\otimes \C)\otimes E)$ is called a Hermitian connection if \begin{align} \label{def} d (h_E(s_1, s_2))=h_E(\nabla s_1, s_2)+h_E(s_1, \nabla s_2),\end{align}
for any two sections $s_1, s_2$ of $E$. \end{defn}

\begin{defn}\label{ps}
A pseudoholomorphic structure on $E$ is given by a differential operator $\bar{\partial}_E: \Gamma(X, E)\rightarrow  \Gamma(X, (T^*X)^{0,1}\otimes E)$ which satisfies the Leibniz rule $$\bar{\partial}_E (fs)=\bar{\partial}f\otimes s+f\bar{\partial}_Es$$ where $f$ is a smooth function and $s$ is a section of $E$.
\end{defn}
If the pseudoholomorphic structure $\bar{\p}_E$ satisfying $\bar{\partial}_E^2=0$ on a complex manifold, it is equivalent to a holomorphic structure on the complex bundle $E$ by Koszul-Malgrange's theorem. In particular, any pseudoholomorphic structure on a complex vector bundle over a Riemann surface $S$ is holomorphic, since $(T^*S)^{0,2}=0$.

Denote $\nabla^{(1,0)}, \nabla^{(0,1)}$ the $(1,0)$ and $(0,1)$ components of $\nabla$. We have
\begin{lem}\label{cancon}
For any Hermitian bundle $(E, h_E)$ with a pseudoholomorphic structure $\bar{\p}_E$, there is a unique Hermitian connection $\nabla$ so that $\nabla^{(0,1)}=\bar{\partial}_E$.

\end{lem}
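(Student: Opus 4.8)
The plan is to construct $\nabla$ directly from $\bar{\partial}_E$ and $h_E$, so that existence and uniqueness fall out of the same formula; the only inputs are the non-degeneracy of $h_E$ and the fact that $d=\partial+\bar{\partial}$ on functions (note that although $d\neq\partial+\bar{\partial}$ on forms of positive degree when $J$ is non-integrable, the decomposition does hold on the functions $h_E(s_1,s_2)$, which is all that is used). Adopt the convention that $h_E$ is $\mathbb{C}$-linear in its first argument and conjugate-linear in its second, with $h_E(a,b)=\overline{h_E(b,a)}$; the opposite convention is handled symmetrically.

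First I would split \eqref{def} by bidegree. Writing $\nabla=\nabla^{(1,0)}+\nabla^{(0,1)}$ and applying $\partial+\bar{\partial}$ to the function $h_E(s_1,s_2)$, one observes that $h_E(\nabla^{(1,0)}s_1,s_2)$ and $h_E(s_1,\nabla^{(0,1)}s_2)$ have type $(1,0)$, while $h_E(\nabla^{(0,1)}s_1,s_2)$ and $h_E(s_1,\nabla^{(1,0)}s_2)$ have type $(0,1)$, since in each term the form produced by $\nabla$ is conjugated or not according to which slot of $h_E$ it occupies. Hence \eqref{def} is equivalent to the pair
\[
\partial h_E(s_1,s_2)=h_E(\nabla^{(1,0)}s_1,s_2)+h_E(s_1,\nabla^{(0,1)}s_2)
\]
together with
\[
\bar{\partial}h_E(s_1,s_2)=h_E(\nabla^{(0,1)}s_1,s_2)+h_E(s_1,\nabla^{(1,0)}s_2),
\]
and, using $h_E(a,b)=\overline{h_E(b,a)}$ together with the identity $\overline{\bar{\partial}f}=\partial\bar f$ on functions, the second identity is obtained from the first by swapping $s_1\leftrightarrow s_2$ and conjugating. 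So it suffices to arrange the first identity with $\nabla^{(0,1)}$ set equal to $\bar{\partial}_E$.

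Next I would define $\nabla^{(1,0)}$ by declaring, for fixed $s_1$,
\[
h_E(\nabla^{(1,0)}s_1,s_2):=\partial h_E(s_1,s_2)-h_E(s_1,\bar{\partial}_E s_2)\qquad\text{for all }s_2 .
\]
A short check — replacing $s_2$ by $fs_2$ and using $\overline{\bar{\partial}f}=\partial\bar f$ so that the $\partial f$-terms cancel — shows the right-hand side is conjugate-$C^\infty(X)$-linear in $s_2$ and $(T^*X)^{1,0}$-valued, so by non-degeneracy of $h_E$ it determines a unique $\nabla^{(1,0)}s_1\in\Gamma(X,(T^*X)^{1,0}\otimes E)$. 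The analogous computation with $s_1$ replaced by $fs_1$ yields $\nabla^{(1,0)}(fs_1)=\partial f\otimes s_1+f\nabla^{(1,0)}s_1$, so $\nabla:=\nabla^{(1,0)}+\bar{\partial}_E$ satisfies $\nabla(fs)=df\otimes s+f\nabla s$, i.e.\ it is a connection with $\nabla^{(0,1)}=\bar{\partial}_E$ and with the $(1,0)$-part of \eqref{def} built in. The $(0,1)$-part of \eqref{def} then holds automatically: it is exactly the conjugate of the displayed defining equation after interchanging $s_1$ and $s_2$, using Hermitian symmetry and $\overline{\partial h_E(s_2,s_1)}=\bar{\partial}h_E(s_1,s_2)$. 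Uniqueness is immediate, since any Hermitian $\nabla$ with $\nabla^{(0,1)}=\bar{\partial}_E$ is forced to satisfy this same formula for $\nabla^{(1,0)}$.

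I do not expect a genuine obstacle: this is the almost-complex analogue of the existence and uniqueness of the Chern connection, and the argument is formal. The one point requiring care is the bookkeeping of bidegrees and complex conjugations (which slot of $h_E$ a given form lands in, and the identities $\partial\bar f=\overline{\bar{\partial}f}$ and $h_E(a,b)=\overline{h_E(b,a)}$), together with the observation above that the type decomposition of $d$ is invoked only on functions. If one prefers a computational proof, the same conclusion can be read off in a local unitary frame $\{e_i\}$ of $(E,h_E)$: writing $\nabla=d+A$ and $\bar{\partial}_E=\bar{\partial}+B$ with $B$ a matrix of $(0,1)$-forms, the Hermitian condition becomes $A+\bar A^{\mathrm T}=0$ and $\nabla^{(0,1)}=\bar{\partial}_E$ becomes $A^{(0,1)}=B$, forcing $A=B-\bar B^{\mathrm T}$; one then checks invariance under unitary changes of frame (or simply invokes the uniqueness just established).
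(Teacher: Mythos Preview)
Your proof is correct. Interestingly, your primary argument and the paper's argument swap roles: the paper proves the lemma by the local unitary frame computation you sketch in your final paragraph (writing $\bar{\partial}_E s_i=\theta_i^j s_j$ and setting the connection form to $\omega_i^j=\theta_i^j-\overline{\theta_j^i}$, then checking the transition law under a change of unitary frame), whereas your main approach is the intrinsic one, defining $\nabla^{(1,0)}$ by the metric formula $h_E(\nabla^{(1,0)}s_1,s_2)=\partial h_E(s_1,s_2)-h_E(s_1,\bar{\partial}_E s_2)$. Your route buys you uniqueness for free and avoids the frame-independence check; the paper's route makes the connection matrix explicit (which is convenient for the computations in Remark~\ref{chern} and elsewhere). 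Both are standard and essentially equivalent---indeed your closing formula $A=B-\bar B^{\mathrm T}$ is exactly the paper's $\omega=\theta-\bar\theta^{\mathrm T}$.
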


The lemma is well known when $J$ is integrable and should be known to experts for general $J$ (see \cite{DeT}). We include a proof for convenience of readers.

\begin{proof} We first prove the existence. Assume that $\{U_\a\}$ is an open chart covering of $X$ with partition of unity $\{\varphi_\a\}$ such that $E|_{U_\a}$ is trivial. For any $s\in \Gamma(X,E)$ and any connection $\nabla$, $\nabla s=\nabla\sum_{\a} \varphi_\a s=\sum_{\a} \nabla(\varphi_\a s)$. So we only need to define $\nabla$ locally on $U_\a$.

Let $\{s_{i}, 1\leq i\leq N\}$ be a unitary frame of $E$ on $U_\a$. Using summation notation, denote $\bar{\p}_Es_i=\theta_i^js_j$, where $\theta_i^j\in T^{0,1}$. Let $\{s'_{i}, 1\leq i\leq N\}$ be another unitary frames of $E$ with $s'_i=f_i^j s_j$. As $\sum_jf_i^j\bar{f}_k^j=\delta_{ik}$, $(f^{-1})_i^j=\bar{f}^i_j$. Denote $\bar{\p}_Es'_i=(\theta')_i^js_j'$. We have
$$(\theta')_i^j=\sum_k(\bar{\p}f_i^k+f_i^l\theta_l^k)\bar{f}^k_j.$$
To define $\nabla$, let $\omega^j_i=\theta_i^j-\overline{\theta_j^i}$ and $\nabla s_{i}=\omega^j_i s_j$. Similarly, for $\{s'_{i}\}$, let $(\omega')^j_i=(\theta')_i^j-\overline{(\theta')_j^i}$ and $\nabla s'_{i}=(\omega')^j_i s'_j$. If $\{\omega^j_i\}$ and $\{(\omega')^j_i\}$ satisfy the transition equation
$(\omega')_i^j=\sum_k(df^k_i+f_i^l\omega_l^k)\bar{f}_j^k,$ they give a well defined connection. This follows by
\begin{align*}
(\omega')_i^j&=(\theta')_i^j-\overline{(\theta')_j^i}\\
&=\sum_k((\bar{\p}f_i^k+f_i^l\theta_l^k)\bar{f}^k_j-(\p\bar{f}_j^k+\bar{f}_j^l\overline{\theta_l^k})f^k_i)\\
&=\sum_k(df^k_i+f_i^l(\theta_l^k-\overline{\theta_k^l}))\bar{f}^k_j=\sum_k(df^k_i+f_i^l\omega_l^k)\bar{f}_j^k
\end{align*}
 where we use $\p(\sum_kf_i^k\bar{f}_j^k)=0$ for the third equality. So $\nabla$ is independent of the frames. From the skew symmetry of $\nabla$, we know that it is a Hermitian connection compatible with $h_E$.

The uniqueness follows easily if we restrict $\nabla$ to the open chart above.
\end{proof}

\begin{remk}\label{chern}
Recall that the almost Chern connection \cite{EL}(see also \cite{Gau2}) associated to $g$ is the unique connection $\nabla^c$ on the tangent bundle such that $\nabla^c J=\nabla^c g=0$ and that the torsion $\Theta$ has vanishing $(1,1)$ part. The $\bar{\p}$ operator on $(T^*X)^{1,0}$ induces a natural pseudoholomrphic structure.  It turns out that the unique Hermitian connection on $(T^*X)^{1,0}$ induced by $\bar{\p}$ as in Lemma \ref{cancon} equals $\nabla^c$. To see this, assume that $\nabla^c e_i=\omega_i^j e_j$ for a unitary frame $\{e_i\}$. By the first structure equation, the $i^{th}$ component of $\Theta$ is $\Theta^i=d \phi_i+\omega_j^i \wedge \phi_j$, where $\{\phi_i\}$ is the coframe. Also $\nabla^c$ acts on $(T^*X)^{1,0}$ by $\nabla^c \phi_i=-\omega^i_j\phi_j$. Then $\Theta^i$ has vanishing $(1,1)$ part if and only if $\bar{\p}\phi_i+(\omega_j^i)^{0,1} \wedge \phi_j=0$ which is equivalent to $(\nabla^c)^{(0,1)}=\bar{\p}$.

Now suppose $E$ is the pluricanonical bundle $\mathcal K_J^{\otimes m}$ with the induced pseudoholomorphic structure $\bar{\p}_m$. Following from the above discussion, the unique Hermitian connection on $\mathcal K_J^{\otimes m}$ induced by the Chern connection on $(T^*X)^{1,0}$ is just the unique Hermitian connection determined by $\bar{\p}_m$ in Lemma \ref{cancon}.

\end{remk}

Let $(E,h_E)$ be the Hermitian bundle with a pseudoholomorphic structure $\bar{\p}_E$. We can define a unique dual pseudoholomorphic structure on $E^*$: $$\bar{\p}_{E^*}: \Gamma(X, E^*)\rightarrow \Gamma(X, (T^*X)^{0,1}\otimes E^*)$$ as follows. For any section $s^*\in \Gamma(X, E^*)$ and any section $s'\in \Gamma(X, E)$, let
\begin{align}\label{L} (\bar{\p}_{E^*}(s^*))(s')=\bar{\p} (s^*(s'))-s^*(\bar{\p}_E(s')). \end{align}
It is easy to verify that $\bar{\p}_{E^*}$ satisfies the Leibniz rule, giving a pseudoholomorphic structure.
With the Hermitian structure $h_E$, there exists a natural complex linear isomorphism $E^*\cong \bar{E}$, where $\bar{E}$ is the conjugate bundle of $E$. Therefore, $\bar{\p}_{E^*}$ induces a pseudoholomorphic structure on $\bar{E}$. On the other side, by Lemma \ref{cancon}, there is a unique Hermitian connection $\nabla$ on $E$ determined by $\bar{\p}_E$ and $h_E$. The conjugate of the $(1,0)$ part of $\nabla$ induces $$\overline{\nabla^{(1,0)}}: \bar{E}\rightarrow (T^*X)^{0,1}\otimes \bar{E}.$$
Define $\bar{\p}_{\bar{E}}=\overline{\nabla^{(1,0)}}$. We have

 \begin{lem}\label{lem3.5}

By identifying $\bar{E}$ with $E^*$, $\bar{\p}_{\bar{E}}=\bar{\p}_{E^*}.$

 \end{lem}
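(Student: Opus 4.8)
### Proof Proposal

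The plan is to verify the identity $\bar{\p}_{\bar{E}} = \bar{\p}_{E^*}$ locally in a unitary frame, exploiting the fact that both sides are already known to satisfy the Leibniz rule, so it suffices to compare them on frame sections. First I would fix an open chart $U_\alpha$ on which $E$ is trivial and choose a unitary frame $\{s_i\}$ of $E$, writing $\bar{\p}_E s_i = \theta_i^j s_j$ with $\theta_i^j \in (T^*X)^{0,1}$, exactly as in the proof of Lemma \ref{cancon}. From that proof, the unique Hermitian connection determined by $\bar{\p}_E$ is $\nabla s_i = \omega_i^j s_j$ with $\omega_i^j = \theta_i^j - \overline{\theta_j^i}$, so its $(1,0)$ part is $\nabla^{(1,0)} s_i = -\overline{\theta_j^i}\, s_j$ (the $(1,0)$-valued piece of $\omega_i^j$), and hence $\overline{\nabla^{(1,0)}}$ acts on the conjugate frame $\{\bar s_i\}$ of $\bar E$ by $\bar{\p}_{\bar E}\, \bar s_i = \overline{\nabla^{(1,0)} s_i} = -\theta_j^i\, \bar s_j$.

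Next I would compute $\bar{\p}_{E^*}$ on the dual frame. Let $\{s^i\}$ be the dual coframe to $\{s_i\}$, so $s^i(s_k) = \delta_k^i$. Applying the defining formula \eqref{L} with $s^* = s^i$ and $s' = s_k$ gives $(\bar{\p}_{E^*} s^i)(s_k) = \bar{\p}(\delta_k^i) - s^i(\bar{\p}_E s_k) = -s^i(\theta_k^j s_j) = -\theta_k^i$, and therefore $\bar{\p}_{E^*} s^i = -\theta_k^i\, s^k$. Finally I would unwind the identification $E^* \cong \bar E$ induced by $h_E$: since the frame $\{s_i\}$ is unitary, the Hermitian metric sends $s_i$ to the functional $s' \mapsto h_E(s', s_i)$, which in the dual frame is precisely $s^i$; equivalently the conjugate-linear isomorphism $\bar E \to E^*$ carries $\bar s_i$ to $s^i$. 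Under this correspondence the formula $\bar{\p}_{\bar E}\, \bar s_i = -\theta_j^i\, \bar s_j$ becomes $\bar{\p}_{E^*} s^i = -\theta_j^i\, s^j$, which matches the computation above. Since both operators agree on a local unitary frame and both satisfy the Leibniz rule, they agree globally.

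The main obstacle, though a mild one, is purely bookkeeping: getting the conjugations and index placements right when passing between $E$, its dual $E^*$, and the conjugate $\bar E$, and in particular being careful that the isomorphism $E^* \cong \bar E$ is conjugate-linear rather than complex-linear in the naive sense, so that a factor $\theta \mapsto \bar\theta$ does or does not appear at the right spot. I would therefore state explicitly at the outset which convention for $E^* \cong \bar E$ is in force (the one making a unitary frame of $E$ correspond to the dual coframe), and check consistency by noting that both $\bar{\p}_{E^*}$ and $\overline{\nabla^{(1,0)}}$ land in $(T^*X)^{0,1}\otimes(\,\cdot\,)$, which forces the same $(0,1)$-type coefficients to appear. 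A secondary point worth a sentence is independence of the chosen unitary frame: this follows because $\bar{\p}_{E^*}$ is defined intrinsically by \eqref{L} and $\nabla$ is frame-independent by Lemma \ref{cancon}, so the local identity automatically globalizes.
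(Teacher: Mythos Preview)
Your proof is correct but takes a different route from the paper. The paper gives a coordinate-free, one-line argument: taking the $(0,1)$ part of the Hermitian compatibility condition \eqref{def} for the canonical connection $\nabla$ yields
\[
\bar{\p}\,h_E(s',\bar{s}) = h_E(\bar{\p}_E s',\bar{s}) + h_E(s',\overline{\nabla^{(1,0)}}\,\bar{s}),
\]
which is exactly the defining product rule \eqref{L} for $\bar{\p}_{E^*}$, so $\bar{\p}_{\bar E}=\overline{\nabla^{(1,0)}}$ agrees with $\bar{\p}_{E^*}$ by uniqueness. You instead compute both operators explicitly on a local unitary frame, reading off $\nabla^{(1,0)}$ from the formula $\omega_i^j=\theta_i^j-\overline{\theta_j^i}$ in the proof of Lemma~\ref{cancon}, and match the resulting connection matrices $-\theta_j^i$ on $\bar s_i$ and $s^i$. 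Your approach has the virtue of being completely concrete and makes the identification $\bar s_i\leftrightarrow s^i$ fully explicit; the paper's approach is quicker and, more importantly, produces equation \eqref{bar} as an intermediate identity, which is immediately reused in the derivation of the Leibniz rule \eqref{Leibniz} for the wedge pairing. If you adopt your version in isolation, you would still need to record \eqref{bar} separately for that later use.
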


 \begin{proof} Let $\bar{s}\in \bar{E}$ and $s'\in E$. The inner product $h_E$ on $E$ induces a bilinear paring between $E$ and $\bar{E}$ which we still denote by $h_E$. Then by (\ref{def}), \begin{align} \label{bar} \bar{\p}h_E(s',\bar{s})=h_E(\nabla^{0,1}s',\bar{s})+h_E(s',\overline{\nabla^{(1,0)}}\bar{s})=h_E(\bar{\p}_Es',\bar{s})+h_E(s',\bar{\p}_{\bar{E}}\bar{s}).\end{align} Therefore, $\bar{\p}_{\bar{E}}$ satisfies the product rule (\ref{L}). Therefore, $\bar{\p}_{\bar{E}}=\bar{\p}_{E^*}$.
 \end{proof}

Next we can extend the $\bar{\p}_E$ operator to $\Lambda^{p,q}\otimes E$ and $\bar{\p}_{\bar{E}}$ to $\Lambda^{r,s}\otimes \bar{E}$ by \begin{align} \label{tensor} \bar{\p}_E (\varphi\otimes u)=(\bar{\p}\varphi)\otimes u+(-1)^{p+q}\varphi\wedge \bar{\p}_E u\\ \notag \bar{\p}_{\bar{E}} (\phi\otimes v)=(\bar{\p}\phi)\otimes v+(-1)^{r+s}\phi\wedge \bar{\p}_{\bar{E}} v.\end{align} Then there is a wedge pairing  $$\wedge: (\Lambda^{p,q}\otimes E)\ \times\ (\Lambda^{r,s}\otimes
\bar{E})\rightarrow  \Lambda^{p+r,q+s}$$ defined by $(\varphi_1\otimes u)\wedge (\varphi_2\otimes v)=h_E(u,v)\varphi_1\wedge\varphi_2$. As before, in the situation, $h_E(u,v)$ is denoted to be the $\mathbb{C}$-bilinear product between $E$ and $\bar{E}$. We have the Leibniz rule for the wedge pairing
\begin{align}\label{Leibniz} \bar{\p}_E(\varphi_1\otimes u)\wedge (\varphi_2\otimes v)=&(\bar{\p}\varphi_1\otimes u+(-1)^{p+q}\varphi_1\wedge \bar{\p}_E u)\wedge (\varphi_2\otimes v)  \notag \\
=&h_E(u,v)\bar{\p}\varphi_1\wedge\varphi_2+(-1)^{2(p+q)}h_E(\bar{\p}_E u,v)\wedge\varphi_1\wedge\varphi_2  \notag\\
=&h_E(u,v)(\bar{\p}(\varphi_1\wedge\varphi_2)-(-1)^{p+q}\varphi_1\wedge\bar{\p}\varphi_2)\\
& +(\bar{\p}h_E(u,v)-h_E(u, \bar{\p}_{\bar{E}} v))\wedge\varphi_1\wedge\varphi_2 \notag \\
=&\bar{\p}((\varphi_1\otimes u)\wedge (\varphi_2\otimes v))-(-1)^{p+q}(\varphi_1\otimes u)\wedge \bar{\p}_{\bar{E}}(\varphi_2\otimes v) \notag
\end{align}
where we use (\ref{bar}) in the third line and $h(u, \bar{\p}_{\bar{E}} v)\wedge\varphi_1\wedge\varphi_2 = (-1)^{p+q+r+s}(\varphi_1\otimes u)\wedge (\varphi_2\otimes \bar{\p}_{\bar{E}}v)$ for the fourth line.

Now we are able to find the dual operator $\bar{\p}^*_E$.  Define $$*: \Lambda^{p,q}\otimes E\rightarrow \Lambda^{n-q,n-p}\otimes E$$ by $*(\varphi\otimes u)=(*\varphi)\otimes u$ for any $\varphi\in \Lambda^{p,q}, u\in E$. From the definition we have
$$h_E(\varphi_1\otimes u_1, \varphi_2\otimes u_2)dV=\varphi_1\otimes u_1\wedge \overline{*(\varphi_2\otimes u_2)}$$
where $\varphi_1\otimes u_1, \varphi_2\otimes u_2\in \Lambda^{p,q}\otimes E$, $h_E$ denotes the original inner product on $E$ and $dV$ is the volume form of $X$. The inner product on $\Gamma(X, \Lambda^{p,q}\otimes E)$ is given by $$
 \langle \varphi_1\otimes u_1, \varphi_2\otimes u_2\rangle =\int_X h_E(\varphi_1\otimes u_1, \varphi_2\otimes u_2)dV.$$
 Define $$\bar{\p}_E^*=-*\nabla^{(1,0)}*.$$ We have
\begin{align*}
\langle \bar{\p}_E(\phi\otimes w), \varphi\otimes u\rangle&=\int_X h_E(\bar{\p}_E(\phi\otimes w), \varphi\otimes u)dV\\
&=\int_X\bar{\p}_E(\phi\otimes w)\wedge \overline{*(\varphi\otimes u)}\\
&=\int_X \bar{\p}(\phi\otimes w\wedge \overline{*(\varphi\otimes u)})-(-1)^{p+q-1}(\phi\otimes w)\wedge \bar{\p}_{\bar{E}}(\overline{*(\varphi\otimes u)})\\
&=-\int_X (\phi\otimes w)\wedge (-1)^{p+q-1}\overline{\nabla^{(1,0)}*(\varphi\otimes u)}\\
&=\langle \phi\otimes w, \bar{\p}_E^*(\varphi\otimes u) \rangle
\end{align*}
for $\phi\otimes w\in \Lambda^{p,q-1}\otimes E, \varphi\otimes u\in \Lambda^{p,q}\otimes E$. We use \eqref{Leibniz} in the third line and Stokes' theorem in the fourth line. So $\bar{\p}_E^*$ gives the formal adjoint of $\bar{\p}_E$.

Define the Laplacian \begin{align}\label{lap} \Delta_{\bar{\p}_E}=\bar{\p}_E\bar{\p}_E^*+\bar{\p}_E^*\bar{\p}_E.
\end{align}
As $\langle \Delta_{\bar{\p}_E} s, s\rangle=\langle \bar{\p}_E s, \bar{\p}_E s\rangle +\langle \bar{\p}_E^*s, \bar{\p}_E^* s\rangle$, $\Delta_{\bar{\p}_E} s=0$ if and only if $\bar{\p}_E s=0$ and $\bar{\p}_E^*s=0$. Denote the space of harmonic $(p,q)$ form section of $E$ by:
$$\mathcal{H}_{\bar{\p}_E}^{(p,q)}(X,E)=\{s\in \Gamma(X, \Lambda^{p,q}\otimes E)| \Delta_{\bar{\p}_E} s=0\}.$$
\begin{thm} \label{finite}
$\Delta_{\bar{\p}_E}$ is an elliptic differential operator and $\mathcal{H}_{\bar{\p}_E}^{(p,q)}(X,E)$ is finite dimensional.
\end{thm}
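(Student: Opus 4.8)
The plan is to reduce the statement to the standard elliptic package. The key point is that ellipticity of a second–order operator depends only on its principal symbol, so I first compute $\sigma(\Delta_{\bar\partial_E})$ and show it is (up to a nonzero scalar) $|\xi|^2\,\mathrm{Id}$ on each fiber of $\Lambda^{p,q}\otimes E$. Recall $\bar\partial_E$ and $\bar\partial_E^*$ are first–order differential operators: locally $\bar\partial_E(\varphi\otimes u)=(\bar\partial\varphi)\otimes u+(\text{zeroth order})$, and similarly $\bar\partial_E^*=-*\nabla^{(1,0)}*$ has the same leading term as $-*\partial*$ tensored with the identity on $E$, since $\nabla^{(1,0)}$ differs from $\partial$ by a bundle endomorphism (a zeroth–order term). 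Hence the principal symbols are $\sigma_{\bar\partial_E}(\xi)=\sigma_{\bar\partial}(\xi)\otimes\mathrm{Id}_E$ and $\sigma_{\bar\partial_E^*}(\xi)=\sigma_{\bar\partial}^*(\xi)\otimes\mathrm{Id}_E$, where $\sigma_{\bar\partial}(\xi)$ is (a constant times) exterior multiplication by $\xi^{0,1}$ and $\sigma_{\bar\partial}^*(\xi)$ is its adjoint, contraction. Therefore
\[
\sigma_{\Delta_{\bar\partial_E}}(\xi)=\bigl(\sigma_{\bar\partial}(\xi)\sigma_{\bar\partial}^*(\xi)+\sigma_{\bar\partial}^*(\xi)\sigma_{\bar\partial}(\xi)\bigr)\otimes\mathrm{Id}_E,
\]
and the classical Dolbeault computation (valid pointwise, so unaffected by non-integrability) gives that $\sigma_{\bar\partial}(\xi)\sigma_{\bar\partial}^*(\xi)+\sigma_{\bar\partial}^*(\xi)\sigma_{\bar\partial}(\xi)$ equals a positive constant times $|\xi^{0,1}|^2=\tfrac12|\xi|^2$ times the identity on $\Lambda^{p,q}$. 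For $\xi\neq0$ this is invertible, so $\Delta_{\bar\partial_E}$ is elliptic.

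Next I invoke the standard theory of elliptic operators on compact manifolds. Since $X$ is compact and $\Delta_{\bar\partial_E}$ is an elliptic, formally self-adjoint, second–order operator acting on sections of the Hermitian vector bundle $\Lambda^{p,q}\otimes E$, its kernel $\mathcal H^{(p,q)}_{\bar\partial_E}(X,E)$ is finite dimensional; this is elliptic regularity plus Rellich compactness, or equivalently the existence of a parametrix. Concretely: any $\Delta_{\bar\partial_E}$-harmonic section is smooth by elliptic regularity, and the $L^2$-closed subspace of harmonic sections has a compact identity map by the Rellich lemma applied to the a priori estimate $\|s\|_{W^{2,2}}\le C(\|\Delta_{\bar\partial_E}s\|_{L^2}+\|s\|_{L^2})$, forcing it to be finite dimensional. (Formal self-adjointness of $\Delta_{\bar\partial_E}$ follows already from the computation in the excerpt showing $\bar\partial_E^*$ is the formal adjoint of $\bar\partial_E$.)

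I do not expect any genuine obstacle here; the content is entirely standard once the symbol is identified. The only place requiring a small amount of care is the symbol computation: one must check that replacing $\partial$ by $\nabla^{(1,0)}$ in $\bar\partial_E^*=-*\nabla^{(1,0)}*$ does not disturb the leading-order term, i.e.\ that $\nabla^{(1,0)}-\partial\otimes\mathrm{Id}$ is a bundle map. This is immediate from the Leibniz rule: for a local frame $\{s_i\}$ one has $\nabla^{(1,0)}s_i=\omega_i^{j,(1,0)}\,s_j$ with the connection forms $\omega_i^j$ smooth, so on $\varphi\otimes s_i$ the operator $\nabla^{(1,0)}$ agrees with $\partial\varphi\otimes s_i$ modulo the zeroth-order term $(-1)^{p+q}\varphi\wedge\omega_i^{j,(1,0)}\otimes s_j$. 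Thus the proof is: (1) observe $\bar\partial_E,\bar\partial_E^*$ are first order with the stated symbols; (2) compute $\sigma_{\Delta_{\bar\partial_E}}(\xi)=c|\xi|^2\mathrm{Id}$, $c>0$, deducing ellipticity; (3) apply elliptic regularity and Rellich's theorem on the compact manifold $X$ to conclude finite-dimensionality of the harmonic space.
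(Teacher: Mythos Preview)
Your proposal is correct and follows the same overall architecture as the paper: identify the principal symbol of $\Delta_{\bar\partial_E}$, conclude ellipticity, and then invoke the standard elliptic package on a compact manifold to get finite-dimensionality of the harmonic space.

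The only difference is in how ellipticity is established. You compute the symbol directly via the Clifford-type identity $e(\xi^{0,1})\iota(\xi^{0,1})+\iota(\xi^{0,1})e(\xi^{0,1})=|\xi^{0,1}|^2\,\mathrm{Id}$ on $\Lambda^{p,q}$, after observing (correctly) that the bundle connection terms in $\bar\partial_E$ and $\nabla^{(1,0)}$ are zeroth order. The paper instead ``freezes coefficients'': at a point $p$ it compares $(J,h)$ with the constant structure $(J_c,h_c)$ agreeing at $p$, notes that $\bar\partial$ and $\bar\partial_c$ differ by an operator whose coefficients vanish at $p$, and then cites ellipticity of the flat Dolbeault Laplacian on $\mathbb C^n$. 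Both arguments are standard and yield the same conclusion; yours is the textbook symbol computation, the paper's is a comparison-with-the-model-case argument that sidesteps writing the symbol explicitly. Neither buys anything the other does not.
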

When $E$ is the trivial bundle, this was pointed out in \cite{Hir}.

\begin{proof}
 We first prove the case when $E$ is a trivial line bundle with $\bar{\p}_E=\bar{\p}$. We shall show that $\Delta_{\bar{\p}}$ is elliptic at any point $p\in X$. As it is a local property, it suffices to discuss in a coordinate chart $U$. Let $(J_c, h_c)$ be the constant almost complex structure and Hermitian structure on $U$ with $J_c(p)=J(p), h_c(p)=h(p)$. $J_c$ is isomorphic to the canonical complex structure of open set in $\mathbb{C}^n$. Denote $\bar{\p}_c$ the "dbar" operator of $J_c$ and $*_c$ the operator corresponding to $h_c$. We have
$$(\bar{\p}\varphi-\bar{\p}_c\varphi)(p)=0, \ *(\varphi)(p)=*_c(\varphi)(p)$$
for any $\varphi\in \Gamma(U, \Lambda^{p,q})$. So $\bar{\p}$ and  $\bar{\p}_c$ differ by a differential operator whose coefficients vanish at $p$. As the principal symbol is only related to the highest degree differential, any operator from compositions of $\bar{\p}, *, \bar{\p}$ would have the same principal symbol at $p$ with the operators if we replace them by $\bar{\p}_c, *_c, \bar{\p}_c$. In particular, $\Delta_{\bar{\p}}$ has the same principal symbol with $\Delta_{\bar{\p}_c}$ at $p$. The latter is the flat Laplacian on $\mathbb{C}^n$ which is elliptic. Therefore $\Delta_{\bar{\p}}$ is elliptic at $p$ and hence everywhere.

For a complex vector bundle $E$. Let $\{\phi_i\}$ be a local unitary coframe of $(T^*X)^{1,0}$ and $\{u_{\nu}\}$ a unitary frame of $E$. Using Einstein summation notation, any section $s$ of $\Lambda^{p,q}\otimes E$ can be expressed as:
$$s=f^{\alpha\beta\nu}\phi_{\alpha}\wedge\bar{\phi}_{\beta}\otimes u_{\nu},$$
where $(\alpha, \beta)$ runs over all multi-indices of $(p,q)$. Then $$\bar{\p}_E s= \bar{\p}f^{\alpha\beta\nu}\wedge\phi_{\alpha}\wedge\bar{\phi}_{\beta}\otimes u_{\nu}+f^{\alpha\beta\nu}\bar{\p}_E(\phi_{\alpha}\wedge\bar{\phi}_{\beta}\otimes u_{\nu}).$$
The principal symbol is calculated solely from the first term.
We use $\simeq$ to mean operators with the same principal symbol. Let $\bar{\p}\otimes id$ represent the differential operator determined by $\bar{\p}\otimes id(f^{\alpha\beta\nu}\phi_{\alpha}\wedge\bar{\phi}_{\beta}\otimes u_{\nu}) := \bar{\p}(f^{\alpha\beta\nu}\phi_{\alpha}\wedge\bar{\phi}_{\beta})\otimes u_{\nu}$ and the Leibniz rule (it depends on choice of $\{u_{\nu}\}$ and is only defined locally). Let $\Delta_{\bar{\p}}\otimes id$ be the operator given by $\Delta_{\bar{\p}}\otimes id (f^{\alpha\beta\nu}\phi_{\alpha}\wedge\bar{\phi}_{\beta}\otimes u_{\nu}):= \Delta_{\bar{\p}}(f^{\alpha\beta\nu}\phi_{\alpha}\wedge\bar{\phi}_{\beta})\otimes u_{\nu}$. We have $$\bar{\p}_E\simeq \bar{\p}\otimes id \ \text{and}\  \Delta_{\bar{\p}_E}\simeq \Delta_{\bar{\p}}\otimes id.$$
We have shown above that $\Delta_{\bar{\p}}$ is elliptic. Then $\Delta_{\bar{\p}}\otimes id$ is elliptic. Therefore  $\Delta_{\bar{\p}_E}$ is elliptic.

The second part follows directly from the elliptic theory (e.g. \cite{LM}). Explicitly, there is a Green operator $G$ together with the projection operator $H: \Omega^{p,q}(X,E)\rightarrow \mathcal{H}_{\bar{\p}_E}^{(p,q)}(X,E)$ such that $\Delta_{\bar{\p}_E}\circ G+H=Id$. Also, $\Delta_{\bar{\p}_E}$ and $G$ are both Fredholm operators and $\mathcal{H}_{\bar{\p}_E}^{(p,q)}(X,E)$ is finite dimensional.
\end{proof}
Moreover, the following Serre duality also holds on compact almost complex manifolds.
\begin{prop}\label{Serre}
For any $0\leq p,q\leq n$, $\mathcal{H}_{\bar{\p}_{E}}^{(p,q)}(X, E)\cong (\mathcal{H}_{\bar{\p}_{E^*}}^{(n-p,n-q)}(X, E^*))^*$.
\end{prop}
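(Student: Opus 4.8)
The plan is to mimic the classical Hodge-theoretic proof of Serre duality, using the $*$ operator as the duality isomorphism and showing it intertwines the relevant Laplacians. First I would recall that the $*$ operator extended to bundle-valued forms, $*: \Lambda^{p,q}\otimes E\to \Lambda^{n-q,n-p}\otimes E$, combined with the conjugate-linear Hermitian identification $E\cong \bar E\cong E^*$ from Lemma \ref{lem3.5}, produces a conjugate-linear isomorphism, call it $\bar *_E: \Lambda^{p,q}\otimes E\to \Lambda^{n-q,n-p}\otimes E^*$ (concretely $\varphi\otimes u\mapsto (*\bar\varphi)\otimes \bar u$ under the identification, or one works with $\overline{*(\cdot)}$). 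The key computation is that, up to sign, $\bar *_E$ conjugates $\bar\partial_E$ to $\bar\partial_{E^*}^*$ and $\bar\partial_E^*$ to $\bar\partial_{E^*}$; since $\bar\partial_E^* = -*\nabla^{(1,0)}*$ was \emph{defined} through the $*$ operator, these identities are essentially formal manipulations using $**=\pm\mathrm{id}$ on $\Lambda^{p,q}$ and the compatibility of $*$ with the pairing in \eqref{Leibniz}. Consequently $\bar *_E$ intertwines $\Delta_{\bar\partial_E}$ on $\Lambda^{p,q}\otimes E$ with $\Delta_{\bar\partial_{E^*}}$ on $\Lambda^{n-q,n-p}\otimes E^*$, hence restricts to an isomorphism of the spaces of harmonic sections.

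The steps, in order: (1) Verify $** = (-1)^{p+q}\mathrm{id}$ (or the appropriate sign from formulas (1)--(2)) on $\Lambda^{p,q}$ on a $2n$-manifold, and note this carries over verbatim to $E$-valued forms since $*$ acts only on the form part. (2) Using the definition $\bar\partial_{E}^* = -*\nabla^{(1,0)}*$ and the fact (from Remark \ref{chern} / Lemma \ref{cancon}) that $\nabla^{(1,0)}$ on $E$ conjugates to $\bar\partial_{\bar E} = \overline{\nabla^{(1,0)}}$ on $\bar E$, show that under $\bar *_E$ the operator $\bar\partial_E$ on $E$-valued $(p,q)$-forms matches $\pm \bar\partial_{E^*}^*$ on $E^*$-valued $(n-q,n-p)$-forms. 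The cleanest route is to take the adjointness identity $\langle \bar\partial_E \alpha,\beta\rangle = \langle \alpha, \bar\partial_E^*\beta\rangle$ already established in the excerpt, rewrite the inner products as integrals of wedge pairings via $h_E(\cdot,\cdot)dV = \cdot\wedge\overline{*(\cdot)}$, and read off that $\overline{*(\cdot)}$ transforms $\bar\partial_E^*$ into the $\bar\partial$-type operator on the dual side. (3) Conclude $\bar *_E \Delta_{\bar\partial_E} = \Delta_{\bar\partial_{E^*}} \bar *_E$, so $s\in \mathcal H^{(p,q)}_{\bar\partial_E}(X,E)$ iff $\bar *_E s \in \mathcal H^{(n-p,n-q)}_{\bar\partial_{E^*}}(X,E^*)$. (4) Since $\bar *_E$ is a conjugate-linear bijection between these finite-dimensional spaces (finite-dimensionality is Theorem \ref{finite}), it induces a $\mathbb C$-linear isomorphism $\mathcal H^{(p,q)}_{\bar\partial_E}(X,E)\cong \big(\mathcal H^{(n-p,n-q)}_{\bar\partial_{E^*}}(X,E^*)\big)^*$ by $s\mapsto \langle \bar *_E s, \,\cdot\,\rangle$ (or equivalently by $s\mapsto \int_X s\wedge(\cdot)$ under the wedge pairing), which is the claimed statement.

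I expect the main obstacle to be purely bookkeeping: getting the signs and conjugations right in Step (2), since in the non-integrable setting one cannot invoke $\bar\partial^2=0$ and must be careful that $\bar\partial_E^*$ was defined via $\nabla^{(1,0)}$ rather than via some $\partial_E$, and that the identification $E^*\cong\bar E$ of Lemma \ref{lem3.5} is conjugate-linear. The conceptual content — that the adjointness of $\bar\partial_E$ and $\bar\partial_E^*$ \emph{is} Serre duality once one dualizes the bundle and applies $*$ — is exactly as in the complex case and uses nothing about integrability; in particular no harmonic theory beyond Theorem \ref{finite} is needed, only the already-proven formal adjoint relation and Stokes' theorem. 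A slightly delicate but routine point is checking that the wedge pairing $(\Lambda^{p,q}\otimes E)\times(\Lambda^{r,s}\otimes \bar E)\to \Lambda^{p+r,q+s}$ from \eqref{Leibniz}, followed by integration over $X$, is a perfect pairing between the two harmonic spaces; this follows because on harmonic representatives the pairing agrees (up to the conjugate-linear identification) with the $L^2$ inner product, which is nondegenerate.
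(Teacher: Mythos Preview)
Your proposal is correct and follows essentially the same route as the paper: the paper also uses the conjugate-linear map $s\mapsto \overline{*s}$ (your $\bar *_E$), verifies directly that $\bar\partial_{E^*}(\overline{*s})=\overline{\nabla^{(1,0)}*s}$ and $\bar\partial^*_{E^*}(\overline{*s})=\pm *\overline{\bar\partial_E s}$ (your Step~(2) intertwining identities), concludes $\overline{*s}$ is harmonic for $E^*$ whenever $s$ is harmonic for $E$, and finishes by noting $\int_X s\wedge \overline{*s}=\|s\|^2\neq 0$ gives nondegeneracy of the wedge pairing. The only cosmetic difference is that the paper checks harmonicity of $\overline{*s}$ directly rather than phrasing it as a Laplacian intertwining, but the underlying computations are identical.
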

\begin{proof} The argument is essentially the same as the classical case (see for example Proposition 4.1.15 of \cite{Huy}), except clarifying the operators on the bundle $E^*$. We only need to show that the natural paring between $\mathcal{H}_{\bar{\p}_{E}}^{(p,q)}(X, E)$ and $\mathcal{H}_{\bar{\p}_{E^*}}^{(n-p,n-q)}(X, E^*)$ is nondegenerate. For any nonzero $s\in \mathcal{H}_{\bar{\p}_{E}}^{(p,q)}(X, E)$, since $\bar{\p}_E s=\bar{\p}_E^* s=0$, by Lemma \ref{lem3.5}, we have $$\bar{\p}_{E^*}(\overline{*s})=\bar{\p}_{\bar{E}}(\overline{*s})=\overline{\nabla^{(1,0)}*s}=0,$$ and $$\bar{\p}^*_{E^*}(\overline{*s})=-*\overline{\bar{\p}_E}*(\overline{*s})=-(-1)^{(p+q)(n-p-q)}*\overline{\bar{\p}_E s}=0.$$ So $\overline{*s}\in \mathcal{H}_{\bar{\p}_{E^*}}^{(n-p,n-q)}(X, E^*)$. As $\int_X s\wedge \overline{*s}=\|s\|^2\neq 0$, the non-degeneracy stands.
\end{proof}

Since $\Lambda^{p,1}\cong (T^*X)^{0,1}\otimes \Lambda^{p,0}$, the $\bar{\p}$ operator induces a natural pseudoholomorphic structure on $\Lambda^{p,0}$ for $0\leq p\leq n$. Denote $\Omega^p(E)=\Lambda^{p,0}\otimes E$. The pseudoholomorphic structures on $\Lambda^{p,0}$ and $E$ gives a pseudoholomorphic structure on $\Omega^p(E)$. Identifying $\Lambda^{p,1}\otimes E$ with $(T^*X)^{0,1}\otimes \Omega^p(E)$ by a permutation sign, the pseudoholomorphic structure on $\Omega^p(E)$ coincides with the $\bar{\p}_E$ operator given by (\ref{tensor}). Define $$H^0(X, \Omega^p(E))=\{s\in \Gamma(X,\Omega^p(E))=\Omega^{p,0}(X, E): \bar{\p}_{E} s=0\}.$$ We have

\begin{prop}\label{finsecE}
Let $E$ be a complex vector bundle with a pseudoholomrphic structure over a compact almost complex manifold $X$, then $H^0(X, \Omega^p(E))$ is finite dimensional for $0\leq p\leq n$.
\end{prop}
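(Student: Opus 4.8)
The plan is to realize $H^0(X,\Omega^p(E))$ as the $(0,0)$ harmonic space of an auxiliary bundle and invoke Theorem \ref{finite}. First I would regard $\Omega^p(E)=\Lambda^{p,0}\otimes E$ as a single complex vector bundle $E':=\Omega^p(E)$ equipped with the pseudoholomorphic structure described in the paragraph preceding the statement, which, under the identification $\Lambda^{p,1}\otimes E\cong (T^*X)^{0,1}\otimes \Omega^p(E)$, agrees with the tensor operator $\bar\partial_E$ of (\ref{tensor}) restricted to $(p,0)$-forms. Endowing $E'$ with the Hermitian metric induced by $g$ and $h_E$, Theorem \ref{finite} applied to $E'$ with bidegree $(0,0)$ gives that $\Delta_{\bar\partial_{E'}}$ is elliptic and $\mathcal{H}^{(0,0)}_{\bar\partial_{E'}}(X,E')$ is finite dimensional.

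Next I would identify $H^0(X,\Omega^p(E))$ with $\mathcal{H}^{(0,0)}_{\bar\partial_{E'}}(X,E')$. The key observation is that on $(0,0)$-forms with values in $E'$ the formal adjoint $\bar\partial_{E'}^{*}=-*\nabla^{(1,0)}*$ lands in $\Lambda^{0,-1}\otimes E'=0$, hence vanishes identically; consequently, from $\langle \Delta_{\bar\partial_{E'}}s,s\rangle=\langle\bar\partial_{E'}s,\bar\partial_{E'}s\rangle+\langle\bar\partial_{E'}^{*}s,\bar\partial_{E'}^{*}s\rangle=\|\bar\partial_{E'}s\|^2$ one gets $\Delta_{\bar\partial_{E'}}s=0$ if and only if $\bar\partial_{E'}s=0$. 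Since $H^0(X,\Omega^p(E))$ is by definition $\ker\big(\bar\partial_E\colon \Omega^{p,0}(X,E)\to \Omega^{p,1}(X,E)\big)$, which under the above identification is the same as $\ker\big(\bar\partial_{E'}\colon \Gamma(X,E')\to \Gamma(X,(T^*X)^{0,1}\otimes E')\big)$, this exhibits $H^0(X,\Omega^p(E))=\mathcal{H}^{(0,0)}_{\bar\partial_{E'}}(X,E')$, which is finite dimensional.

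I do not expect a genuine obstacle here; the only point requiring care is the compatibility, already recorded above, between the pseudoholomorphic structure on $\Omega^p(E)$ and the operator $\bar\partial_E$ of (\ref{tensor}), so that the definition of $H^0(X,\Omega^p(E))$ really matches the $(0,0)$ harmonic space of $E'$. Alternatively, one may avoid introducing $E'$ and argue directly: the Laplacian $\Delta_{\bar\partial_E}$ on $\Lambda^{p,0}\otimes E$ is elliptic by the symbol computation in the proof of Theorem \ref{finite} (the reduction to the flat model $\Delta_{\bar\partial_c}\otimes\mathrm{id}$ is insensitive to the value of $p$), its kernel is finite dimensional by elliptic theory on the compact manifold $X$, and on $(p,0)$-forms that kernel coincides with $\ker\bar\partial_E=H^0(X,\Omega^p(E))$ because $\bar\partial_E^{*}$ is identically zero in that bidegree.
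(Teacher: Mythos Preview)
Your proposal is correct and essentially matches the paper's proof. The paper uses your ``alternative'' route directly: it observes that $\bar\partial_E^{*}=0$ on $\Omega^{p,0}(X,E)$, so $\bar\partial_E s=0$ is equivalent to $\Delta_{\bar\partial_E}s=0$, whence $H^0(X,\Omega^p(E))=\mathcal{H}^{(p,0)}_{\bar\partial_E}(X,E)$, which is finite dimensional by Theorem~\ref{finite}. Your primary approach, passing to $E'=\Lambda^{p,0}\otimes E$ and working at bidegree $(0,0)$, is a harmless repackaging of the same argument.
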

\begin{proof}
As $\bar{\p}^*_{E}=0$ on  $ \Omega^{p,0}(X, E)$, $\bar{\p}_{E} s=0$ is  equivalent to $\Delta_{\bar{\p}_{E} }s=0$. So $$H^0(X, \Omega^p(E))=\mathcal{H}_{\bar{\p}_{E}}^{(p,0)}(X, E),$$ which is finite dimensional.
\end{proof}

\begin{cor}
$H^0(X,\mathcal K^{\otimes m})$ is finite dimensional.
\end{cor}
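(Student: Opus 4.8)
The plan is to obtain this as an immediate consequence of Proposition \ref{finsecE}. Recall from Section 2 that the pluricanonical bundle $\mathcal K^{\otimes m}$ is a complex line bundle over $X$, and that the operator $\bar{\p}_m\colon \mathcal K^{\otimes m}\to (T^*X)^{0,1}\otimes \mathcal K^{\otimes m}$ satisfies the Leibniz rule $\bar{\p}_m(fs)=\bar{\p}f\otimes s+f\bar{\p}_m s$; hence $\bar{\p}_m$ is a pseudoholomorphic structure on $\mathcal K^{\otimes m}$ in the sense of Definition \ref{ps}.

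Next I would apply Proposition \ref{finsecE} with $E=\mathcal K^{\otimes m}$ (equipped with $\bar{\p}_m$) and $p=0$. Since $\Omega^0(E)=\Lambda^{0,0}\otimes E=E$, and under this identification the $\bar{\p}_E$ operator of (\ref{tensor}) on $\Omega^0(E)$ is exactly $\bar{\p}_m$, we get
$$H^0(X,\Omega^0(\mathcal K^{\otimes m}))=\{s\in \Gamma(X,\mathcal K^{\otimes m}):\bar{\p}_m s=0\}=H^0(X,\mathcal K^{\otimes m}).$$
Proposition \ref{finsecE} then says this space is finite dimensional. Unwinding the proof of that proposition, one fixes the Hermitian metric on $\mathcal K^{\otimes m}$ induced by $g$ (as in Remark \ref{chern}), observes that $\bar{\p}_E^*=0$ on $\Omega^{0,0}(X,\mathcal K^{\otimes m})$, so that $\ker \bar{\p}_m=\mathcal{H}^{(0,0)}_{\bar{\p}_E}(X,\mathcal K^{\otimes m})=\ker\Delta_{\bar{\p}_E}$, and invokes the ellipticity of $\Delta_{\bar{\p}_E}$ from Theorem \ref{finite}.

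I do not expect any genuine obstacle: all the analytic work is already contained in Theorem \ref{finite} and Proposition \ref{finsecE}. The only things to verify are the purely formal points that the $\bar{\p}_E$ operator supplied by the general construction on $\Omega^0(\mathcal K^{\otimes m})$ coincides with the hands-on operator $\bar{\p}_m$ from Section 2, and that the resulting dimension does not depend on the auxiliary choice of compatible metric $g$ — the latter being automatic since the vector space $\ker\bar{\p}_m$ makes no reference to $g$.
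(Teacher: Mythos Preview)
Your proposal is correct and matches the paper's proof essentially verbatim: the paper simply sets $E=\mathcal K^{\otimes m}$ with $\bar{\p}_E=\bar{\p}_m$ and $p=0$, then invokes Proposition \ref{finsecE}. Your additional remarks about the coincidence of operators and metric independence are fine elaborations but not needed for the argument.
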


\begin{proof}
Let $E=\mathcal K^{\otimes m}$ with $\bar{\p}_E=\bar{\p}_m$ and $p=0$. Then it follows from Proposition \ref{finsecE}.
\end{proof}

Proposition \ref{finsecE} also implies that $\mathcal{H}_{\bar{\p}_{E}}^{(p,0)}(X, E)$ is independent of the Hermitian metric used to define  $\Delta_{\bar{\p}_{E}}$. This should not hold for $\mathcal{H}_{\bar{\p}_{E}}^{(p,q)}(X, E)$ for $q>0$.  However, it is possible that the dimension of $\mathcal{H}_{\bar{\p}_{E}}^{(p,q)}(X, E)$ is independent of the defining Hermitian metric. When $q=\dim_{\C}X$, Proposition \ref{Serre} implies this is true. In general, we have the following question as a generalization of Problem 20 (Kodaira-Spencer) in Hirzebruch's list \cite{Hir}.
 \begin{que}\label{Dol}
Does $\dim \mathcal{H}_{\bar{\p}_{E}}^{(p,q)}(X, E)$ depends only on $J$ and $\bar{\p}_{E}$ for any $0<q<\dim_{\C} X$?
\end{que}
\section{Bundle almost complex structure and Iitaka dimension}\label{defIita}

Recall that for any holomorphic vector bundle over a complex manifold, the total space is also a complex manifold so that any smooth section $s$ is $\bar{\p}$ closed if and only if $s$ induces a holomorphic map. For a complex vector bundle $E$ over the almost complex manifold $(X,J)$,  a {\it bundle almost complex structure} as in \cite{DeT} (here we use the rephrasement from \cite{LS}) is an almost complex structure $\mathcal J$ on $TE$ so that
\begin{enumerate}[(i)]
 \item the projection is $(\mathcal J, J)$-holomorphic,
 \item  $\mathcal J$ induces the standard complex structure on each fiber, i.e. multiplying by $i$,
 \item  the fiberwise addition $\alpha: E\times_M E\rightarrow E$ and the fiberwise multiplication by a complex number $\mu:\C\times E\rightarrow E$ are both pseudoholomorphic.
 \end{enumerate}

It is shown in \cite{DeT} that a bundle almost complex structure $\mathcal J$ on $E$ determines a pseudoholomorphic structure $\bar{\p}_{\mathcal J}$, and the map $\mathcal J\mapsto  \bar{\p}_{\mathcal J}$ is a bijection between the spaces of bundle almost complex structures and pseudoholomorphic structures on $E$.
We include here a direct proof for reader's convenience.

\begin{prop}[De Bartolomeis-Tian]\label{bacs=ps} There is a bijection between bundle almost complex structures and the pseudoholomorphic structures on $E$. \label{deT}
\end{prop}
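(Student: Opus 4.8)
The plan is to establish the bijection by exhibiting explicit inverse constructions in both directions and checking they are mutually inverse. Given a bundle almost complex structure $\mathcal J$ on $TE$, the horizontal distribution is \emph{not} canonical, but the splitting $TE|_{\text{zero section}} = \pi^*TX \oplus E$ along the zero section, together with the pseudoholomorphicity of the fiberwise scalar multiplication $\mu_t$ for $t\in\C$, lets one transport the picture everywhere. Concretely, I would work locally over a chart $U$ where $E$ is trivialized as $U\times\C^N$ with fiber coordinates $w = (w^1,\dots,w^N)$; there $\mathcal J$ is determined by how it acts on $dw^\nu$, and conditions (i), (ii), (iii) force
$$\mathcal J^* (dw^\nu) = i\, dw^\nu + A^\nu,$$
where $A^\nu \in (T^*X)^{0,1}$ has coefficients that are \emph{linear} in $w$ (linearity is exactly where the additivity and scalar multiplication axioms (iii) enter — they kill any nonlinear dependence on the fiber variable). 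Writing $A^\nu = -\theta^\nu_\mu\, w^\mu$ with $\theta^\nu_\mu$ a matrix of $(0,1)$-forms on $U$, one then \emph{defines} $\bar\p_{\mathcal J}$ on a local section $s = (s^\nu)$ by $\bar\p_{\mathcal J} s = (\bar\p s^\nu + \theta^\nu_\mu s^\mu)\otimes u_\nu$ in the local frame $\{u_\nu\}$.

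The key steps, in order: (1) show that a bundle almost complex structure, restricted to a trivializing chart, has the normal form above — this uses (i) to see that $\mathcal J$ preserves the vertical tangent space and acts correctly on the quotient $\pi^*TX$, (ii) to pin the vertical action to multiplication by $i$, and (iii) to force the linear dependence on $w$; (2) check that the locally defined $\bar\p_{\mathcal J}$ is independent of the trivialization, i.e. that under a transition $s'_\nu = f^\mu_\nu s_\mu$ the connection forms $\theta^\nu_\mu$ transform by $(\theta')^\nu_\mu = (\bar\p f^\lambda_\mu + f^\kappa_\mu \theta^\lambda_\kappa)(f^{-1})^\nu_\lambda$ — which is precisely the rule making $\bar\p f\otimes s + f\,\bar\p_{\mathcal J} s$ well defined, so the Leibniz rule of Definition \ref{ps} holds and $\bar\p_{\mathcal J}$ is a genuine pseudoholomorphic structure; (3) conversely, given $\bar\p_E$, write its local connection forms $\theta^\nu_\mu$ and \emph{define} $\mathcal J$ on $TE|_U = T(U\times\C^N)$ by declaring $\mathcal J$ to be $J$ on the "horizontal" complement cut out by the subbundle where $dw^\nu + \theta^\nu_\mu w^\mu$ vanishes, and multiplication by $i$ on the vertical — then verify (i), (ii), (iii) directly, and check the global consistency using the same transition rule from step (2); (4) verify the two constructions are mutually inverse, which at this point is a matter of reading off that both directions are governed by the same local data $\{\theta^\nu_\mu\}$ modulo the same gauge equivalence.

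The main obstacle I expect is step (1): extracting the normal form of $\mathcal J$, and in particular proving that $A^\nu$ depends \emph{linearly} on the fiber coordinate. Off the zero section there is no canonical horizontal space, so one must genuinely exploit axiom (iii). The cleanest route is to use the pseudoholomorphicity of $\mu_t\colon w\mapsto tw$ for all $t\in\C$: this is an action of $\C^*$ (extended over $t=0$) by pseudoholomorphic bundle maps, and averaging / homogeneity under this action forces the fiber-dependence of $\mathcal J$ to be homogeneous of the right weight, i.e. linear. Similarly, pseudoholomorphicity of the fiberwise addition $\alpha\colon E\times_X E\to E$ says $\mathcal J$ at $w_1+w_2$ is compatible with $\mathcal J$ at $w_1$ and $w_2$, which upgrades homogeneity to genuine additivity. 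Once the linear normal form is in hand, everything else is bookkeeping with transition functions that mirrors the proof of Lemma \ref{cancon} almost verbatim, so I would present step (1) in detail and treat steps (2)–(4) more briskly, pointing to the analogy with Lemma \ref{cancon} and Lemma \ref{lem3.5}.
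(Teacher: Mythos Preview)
Your approach is correct and establishes the same bijection, but by a genuinely different route. The paper works intrinsically: given $\mathcal J$, it sets $d''s=\frac{1}{2}(ds+\mathcal J\circ ds\circ J)$ for any section $s$, shows (using axiom (i) alone) that $d''s$ lands in the vertical bundle $V(E)\cong\pi^*E$, and reads off $\bar\partial_{\mathcal J}s$ as the $(0,1)$-part; the Leibniz rule then falls out of (ii) and (iii) in one line. Conversely, an arbitrary section $s$ produces a splitting $T_vE=ds(TX)\oplus V_v(E)$ on which $\mathcal J$ is written down explicitly, and one checks independence of $s$. Your local-coordinate approach instead extracts the connection matrix $\theta^\nu_\mu$ and exhibits the bijection as ``same local data modulo the same gauge rule,'' which is more explicit and parallels Lemma~\ref{cancon} closely. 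The paper's version is shorter and yields Corollary~\ref{holo} for free (since $d''s=0$ is literally the $(J,\mathcal J)$-holomorphicity of $s$), whereas yours makes the affine structure on the space of pseudoholomorphic structures transparent. One small caution in your step (3): because $\theta^\nu_\mu$ is $(0,1)$-valued, the condition ``$dw^\nu+\theta^\nu_\mu w^\mu=0$'' does not cut out a real horizontal distribution; the clean formulation is to declare the $(1,0)$-coframe of $\mathcal J$ to be $\pi^*(T^*X)^{1,0}$ together with the forms arising from your normal form in step (1), which is what that step already encodes.
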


\begin{proof}

Assume that $\mathcal{J}$ is a bundle almost complex structure. Let $s:X\lra E$ be in $\Gamma(X,E)$ and $\pi: E\lra X$ be the projection. We have $\pi\circ s=id_X$. Define $d''s: TX\lra TE$ by $d''s=\frac{1}{2}(ds+\mathcal{J}\circ ds\circ J)$. Then $d''s=0$ if and only if $s$ is a $(J,\mathcal{J})$ holomorphic map.
From $d''s$, we will define an element in $\Gamma(X,(T^*X)^{(0,1)}\otimes E)$. We have \begin{align*}
d\pi\circ d''s&=\frac{1}{2}(d\pi\circ ds+d\pi\circ \mathcal{J}\circ ds\circ J)\\
&=\frac{1}{2}(id+J\circ d\pi\circ ds\circ J)=\frac{1}{2}(id-id)=0,\end{align*}
where property i) is used in the second line. So $d''s(TX)\in ker(d\pi)=V(E)$ with $V(E)$ being the vertical tangent bundle of $E$. For a vector bundle, $V(E)$ is canonically isomorphic to $\pi^*(E)$ on $E$ and there is a commutative diagram:

\begin{center}
\begin{tikzcd}
	V(E)\cong \pi^*(E) \arrow{r}{\pi^*} \arrow{d}
& E\arrow{d}
\\
E\arrow{r}{\pi}
& X
\end{tikzcd}
\end{center}

 As $\pi\circ s=id_X$, we get a bundle homomorphism $\pi^*\circ d''s: TX\lra E$ over $id:X\lra X$. Define \begin{align}\label{partial} \bar{\p}_Es=\pi^*\circ d''s|_{(TX)^{(0,1)}},\end{align} then $\bar{\p}_Es\in \G(X,(T^*X)^{(0,1)}\otimes E)$. To verify that $\bar{\p}_E$ satisfies the Leibniz rule, from $d(fs)=df s+fds$, we get
 \begin{align*} d''(fs)&=\f{1}{2}(d(fs)+ \mathcal{J}\circ d(fs)\circ J)=\f{1}{2}(df s+fds+\mathcal{J}\circ (dfs+fds)\circ J)\\
 &=\f{1}{2}((df+idf\circ J)s+f(ds+ \mathcal{J}\circ ds\circ J))\\
 &=\bar{\p}fs+fd''s
 \end{align*}
 where we use property ii) and iii) of $\mathcal{J}$ in the second line. Passing to $E$, we have $\bar{\partial}_E (fs)=\bar{\partial}f\otimes s+f\bar{\partial}_Es$. So $\bar{\partial}_E$ gives a pseudoholomorphic structure on $E$.

On the other hand, let $\bar{\p}_E$ be a pseudoholomorphic structure and $s:X\lra E$ in $\G(X,E)$. Then $\bar{\p}_Es\in\G(X,(T^*X)^{(0,1)}\otimes E)$ and $\overline{\bar{\p}_Es}\in\G(X,(T^*X)^{(1,0)}\otimes E)$. Together they give $\bar{\p}_Es+\overline{\bar{\p}_Es}: TX\lra E$. As $ V(E)\cong\pi^*(E)=\{(v_1,v_2)\in E\times E, \pi(v_1)=\pi(v_2)\}$, there is an embedding $s^*: E\lra V(E)$ induced by $s$ given by $$s^*(v)=(s\pi(v), v).$$
 Composing it with $\bar{\p}_Es+\overline{\bar{\p}_Es}$, we have $s^*\circ (\bar{\p}_Es+\overline{\bar{\p}_Es}): TX\lra V(E)\subset TE$. At each point $v$ of $E$, from $d\pi\circ ds=id_{TX}$, we get $T_vE=ds(TX)\oplus V_v(E)$. Then define the bundle almost complex structure $\mathcal{J}$ on $TE$ by letting $$\mathcal{J}=(-2s^*\circ (\bar{\p}_Es+\overline{\bar{\p}_Es})+ds)\circ J\circ ds^{-1}$$ on $ds(TX)$ and $\mathcal{J}=J_{st}$ on the vertical tangent space where $J_{st}$ is the standard complex structure by multiplying with $i$. Using the Leibniz rule of $\bar{\p}_E$, it can be showed that $\mathcal{J}$ is in independent of the smooth sections $s$ and satisfies properties i), ii), iii). Also, the constructions give the one-to-one correspondence discovered in \cite{DeT}.
\end{proof}

Denote $\bar{\p}_{\mathcal{J}}$ the pseudoholomorphic structure determined by a bundle almost complex structure $\mathcal{J}$. We have

\begin{cor} \label{holo}
For any $s\in \Gamma(X, E)$, $\bar{\p}_{\mathcal{J}}s=0$ if and only if $s$ is $(J, \mathcal{J})$ holomorphic.
\end{cor}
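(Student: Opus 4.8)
The plan is to unwind both directions of the correspondence in Proposition \ref{bacs=ps} and check that, for a fixed section $s$, the vanishing of $\bar\partial_{\mathcal J}s$ is literally the same condition as $d''s = 0$. Recall from the proof of Proposition \ref{bacs=ps} that $\mathcal J = \bar\partial_{\mathcal J}$ is defined by setting $\bar\partial_E s = \pi^*\circ d''s|_{(TX)^{(0,1)}}$, where $d''s = \frac12(ds + \mathcal J\circ ds\circ J)$ and $\pi^*\colon V(E)\xrightarrow{\sim}\pi^*(E)$ is the canonical identification of the vertical tangent bundle with the pullback of $E$. Since $\pi^*$ is a fiberwise isomorphism, $\pi^*\circ d''s|_{(TX)^{(0,1)}} = 0$ if and only if $d''s|_{(TX)^{(0,1)}} = 0$, i.e. $d''s$ vanishes on the $(0,1)$ part of $T_{\mathbb C}X$.

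The next step is to observe that $d''s$ automatically vanishes on the $(1,0)$ part as well, so that $d''s|_{(TX)^{(0,1)}} = 0$ already forces $d''s = 0$. Indeed, $d''s = \frac12(ds + \mathcal J\circ ds\circ J)$ is by construction $\mathbb C$-antilinear in the appropriate sense: one checks directly that $\mathcal J\circ d''s = -d''s\circ J$, so $d''s$ is a $(J,\mathcal J)$-antilinear bundle map $TX\to TE$, hence determined by its restriction to $(TX)^{(0,1)}$ (its value on $(TX)^{(1,0)}$ is the conjugate). Therefore $\bar\partial_{\mathcal J}s = 0 \iff d''s|_{(TX)^{(0,1)}} = 0 \iff d''s = 0$. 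Finally, by the remark already made in the proof of Proposition \ref{bacs=ps}, $d''s = 0$ holds precisely when $s\colon(X,J)\to(E,\mathcal J)$ is a pseudoholomorphic map, since $ds$ then commutes with the almost complex structures $J$ and $\mathcal J$.

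One has to be a little careful that the $\mathcal J$ appearing in $d''s$ is the \emph{same} $\mathcal J$ produced from $\bar\partial_E$ by the reverse construction in Proposition \ref{bacs=ps} — i.e. that the bijection $\mathcal J\leftrightarrow\bar\partial_{\mathcal J}$ is genuinely inverse to $\bar\partial_E\mapsto \mathcal J$ — but this is exactly the content of the last sentence of that proof, which we may assume. With that in hand the corollary is immediate: start from $\bar\partial_{\mathcal J}$, recover $\mathcal J$, and the two descriptions of ``$s$ is holomorphic'' coincide. The only genuine point requiring attention, and the main (mild) obstacle, is verifying the antilinearity identity $\mathcal J\circ d''s = -d''s\circ J$, which uses $J^2 = -\mathrm{id}$, $\mathcal J^2 = -\mathrm{id}$, and nothing more; everything else is bookkeeping with the isomorphism $V(E)\cong\pi^*(E)$.
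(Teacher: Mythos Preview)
Your argument is correct and coincides with the paper's first proof: the paper simply records the identity $\pi^*\circ d''s=\bar\partial_{\mathcal J}s+\overline{\bar\partial_{\mathcal J}s}$ (which is exactly your antilinearity observation $\mathcal J\circ d''s=-d''s\circ J$ rephrased) and concludes immediately that $\bar\partial_{\mathcal J}s=0\iff d''s=0$. The paper also supplies a second, independent proof by restricting to $J$-holomorphic disks through each point and each complex direction, where the bundle becomes genuinely holomorphic and the equivalence is classical; you do not need this, but it is worth knowing as an alternative local argument.
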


\begin{proof}
Since the result is used frequently in this note, we offer two separate proofs which have their own ingredients.

The first proof follows directly from Proposition \ref{deT}. From (\ref{partial}) we have $\pi^*\circ d's=\bar{\p}_{\mathcal{J}}s+\overline{\bar{\p}_{\mathcal{J}}s}$. Then $\bar{\p}_{\mathcal{J}}s=0$ is equivalent to $d's=0$, which means that $s$ is $(J, \mathcal{J})$ holomorphic.

The second proof applies a local argument. For each 2-dimensional $J$-invariant subspace $P$ in $T_pX$ at a point $p$, we know there is a $J$-holomorphic disk $D$ passing through $p$ with the tangent plane $P$. Then $J$ is integrable on $D$ and $E|_D$ is a holomorphic bundle by dimension reason (see the argument after Definition 3.2). Restricted on $D$, it is known that $\bar{\p}_{\mathcal{J}}s=0$ is equivalent to that $s$ is $(J, \mathcal{J})$ holomorphic. Since both $\bar{\p}_{\mathcal{J}}s=0$ and $(J, \mathcal{J})$ holomorphic are local conditions and only depend on the complex directions, we can choose $P$ to be any directions and obtain the general equivalence.
\end{proof}

We call such a section $s$ a pseudoholomorphic section of $(E, \mathcal J)$. The above correspondence builds the bridge to the second author's paper \cite{Z2} on the intersections of almost complex submanifolds, and is used frequently in this paper. In particular, when $E$ is a complex line bundle over $4$-manifold $(X, J)$, the zero locus of a pseudoholomorphic section $s$ is a $J$-holomorphic $1$-subvariety in class $c_1(E)$ by Corollary 1.3 of \cite{Z2}.

For any $(E, \mathcal J)$, define $$H^0(X, (E, \mathcal J))=H^{0,0}_{\bar{\p}_{\mathcal J}}(X, E)=\{s\in \Gamma(X, E): \bar{\p}_{\mathcal J} s=0\}.$$ By Theorem \ref{finite}, it is finite dimensional. The $(E, \mathcal J)$-genus of $X$ is defined as $P_{E, \mathcal J}:=\dim H^0(X, (E, \mathcal J))$. When there is no confusion of the choice of bundle almost complex structure $\mathcal J$, we will simply write it as $P_E$. The bundle almost complex structure $\mathcal J$ on $E$ induces bundle almost complex structures on $E^{\otimes m}$, which is also denoted by $\mathcal J$. Thus the notation $P_{E^{\otimes m}, \mathcal J}$ or simply $P_{E^{\otimes m}}$ makes sense. When $E=\mathcal K$ endowed with the standard bundle almost complex structure, we have the $m^{th}$ plurigenus of $(X,J)$ is defined to be $P_m(X, J)=\dim H^0(X, \mathcal K^{\otimes m})$.

We are now ready to define the Iitaka dimension (and the Kodaira dimension).

\begin{defn}
The Iitaka dimension $\kappa^J(X, (L, \mathcal J))$ of a complex line bundle $L$ with bundle almost complex structure $\mathcal J$ over $(X, J)$ is defined as
$$\kappa^J(X, (L, \mathcal J))=\begin{cases}\begin{array}{cl} -\infty, &\ \text{if} \ P_{L^{\otimes m},\mathcal J}=0\  \text{for any} \ m\geq 0\\
\limsup_{m\rightarrow \infty} \dfrac{\log P_{L^{\otimes m},\mathcal J}}{\log m}, &\  \text{otherwise.}
\end{array}\end{cases}$$

The Kodaira dimension $\kappa^J(X)$ is defined by choosing $L=\mathcal K$ and $\mathcal J$ to be the bundle almost complex structure induced by $\bar{\p}$.
\end{defn}

\section{Birational invariants}\label{birational}

As suggested by the results in \cite{Z2}, degree $1$ pseudoholomorphic maps are the right notion of birational morphism in almost complex category. We define two almost complex manifolds $M$ and $N$ to be birational to each other if there are almost complex manifolds $M_1, \cdots, M_{n+1}$ and $X_1, \cdots, X_{n}$ such that $M_1=M$ and $M_{n+1}=N$, and there are degree one pseudoholomorphic maps $f_i: X_i\rightarrow M_i$ and $g_i: X_{i}\rightarrow M_{i+1}$, $i=1, \cdots, n$.

The next natural step is to find birational invariants. In birational geometry, there are many important birational invariants, including the fundamental group, the Hodge numbers $h^{p,0}$, the plurigenera and in particular the Kodaira dimension. As shown in Theorem 1.5 of \cite{Z2}, $X=M\#k\overline{\mathbb CP^2}$ when there is a degree one pseudoholomorphic map $\phi: X\rightarrow M$ between $4$-dimensional almost complex manifolds. Hence, the fundamental group is apparently also birationally invariant in the almost complex category.

We will show in this section that the almost complex Kodaira dimension $\kappa^J$, plurigenera $P_m$ and Hodge numbers $h^{p,0}$ are birational invariants for $4$-dimensional almost complex manifolds.

We first show that plurigenera and Kodaira dimension for almost complex $4$-manifolds are non-increasing under pseudoholomorphic maps of non-zero degree.

\begin{lem}\label{kjsurj}
Let $u: (X, J)\rightarrow (Y, J_Y)$ be a surjective pseudoholomorphic map between closed almost complex $2n$-manifolds. Then $P_m(X, J)\ge P_m(Y, J_Y)$. Hence, $\kappa^{J}(X)\ge \kappa^{J_Y}(Y)$.
\end{lem}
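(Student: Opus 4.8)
The plan is a pull-back argument. First I would check that $u$, being pseudoholomorphic, gives a well-defined pull-back on pluricanonical sections. Since $du$ intertwines $J$ and $J_Y$, the pull-back $u^*$ on forms is type-preserving, $u^*\bigl(\Omega^{p,q}(Y)\bigr)\subseteq\Omega^{p,q}(X)$, and it commutes with $d$, hence with $\bar{\partial}=\pi^{p,q+1}\circ d$. In particular $u^*$ sends $\Gamma(Y,\mathcal K_Y)=\Omega^{n,0}(Y)$ to $\Gamma(X,\mathcal K_X)$ intertwining $\bar{\partial}$. Because $u^*$ is multiplicative on tensor products and $\bar{\partial}_m$ is built from $\bar{\partial}$ by the product rule, $u^*$ extends to a linear map $u_m^*\colon \Gamma(Y,\mathcal K_Y^{\otimes m})\to\Gamma(X,\mathcal K_X^{\otimes m})$ with $u_m^*\circ\bar{\partial}_m=\bar{\partial}_m\circ u_m^*$; consequently $u_m^*$ restricts to a linear map $H^0(Y,\mathcal K_Y^{\otimes m})\to H^0(X,\mathcal K_X^{\otimes m})$. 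This is the same naturality that makes $H^0(X,\mathcal K^{\otimes m})$ categorical, now applied to a not-necessarily-invertible pseudoholomorphic map.

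The heart of the matter is to show $u_m^*$ is injective; then $P_m(Y,J_Y)=\dim H^0(Y,\mathcal K_Y^{\otimes m})\le \dim H^0(X,\mathcal K_X^{\otimes m})=P_m(X,J)$ follows at once. Since $u$ is a surjective smooth map between closed $2n$-manifolds, Sard's theorem forces the existence of a regular point: otherwise every value of $u$ would be a critical value, a set of measure zero, contradicting $u(X)=Y$. Thus the set $W=\{x\in X: du_x \text{ is an isomorphism}\}$ is open and nonempty, $u|_W$ is a local diffeomorphism, and $V:=u(W)$ is open and nonempty in $Y$. For $x\in W$, $du_x$ is $\C$-linear and invertible on $(1,0)$-tangent spaces, so $u_m^*$ is a fiberwise isomorphism $\mathcal K_{Y,u(x)}^{\otimes m}\xrightarrow{\ \sim\ }\mathcal K_{X,x}^{\otimes m}$. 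Now let $0\ne s\in H^0(Y,\mathcal K_Y^{\otimes m})$. By Proposition \ref{finsecE}, $s$ lies in $\mathcal H_{\bar{\partial}_E}^{(0,0)}(Y,E)$ for $E=\mathcal K_Y^{\otimes m}$, i.e.\ in the kernel of the second-order elliptic operator $\Delta_{\bar{\partial}_E}$ of Theorem \ref{finite}; by the unique continuation property for such operators, $s$ cannot vanish identically on the nonempty open set $V$. Choosing $y=u(x)\in V$ with $s_y\ne 0$ and $x\in W$, we get that $(u_m^* s)_x$ is the image of $s_y$ under a fiberwise isomorphism, so $(u_m^* s)_x\ne 0$ and $u_m^* s\ne 0$. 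Hence $u_m^*$ is injective.

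For the Kodaira dimension, apply the inequality $P_{\mathcal K_Y^{\otimes m}}\le P_{\mathcal K_X^{\otimes m}}$ for every $m\ge 1$ and compare Iitaka dimensions via Definition \ref{Iv1}. If $\kappa^{J_Y}(Y)=-\infty$ there is nothing to prove; otherwise $P_{\mathcal K_Y^{\otimes m}}>0$ for some $m$, hence $P_{\mathcal K_X^{\otimes m}}>0$ for that $m$ and $\kappa^{J}(X)\ne-\infty$ as well, while the termwise inequality $\log P_{\mathcal K_Y^{\otimes m}}/\log m\le \log P_{\mathcal K_X^{\otimes m}}/\log m$ gives $\kappa^{J_Y}(Y)\le \kappa^{J}(X)$ upon taking $\limsup$ as $m\to\infty$.

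The step I expect to be the main obstacle is injectivity of $u_m^*$, specifically ruling out that a nonzero pluricanonical section $s$ on $Y$ pulls back to zero. The formal pull-back is routine, but one genuinely needs (i) a point where $du$ is an isomorphism — handled by Sard together with surjectivity — and (ii) the fact that $s$ does not vanish on the open set $V$ where $u$ is a local diffeomorphism, which is where ellipticity of $\Delta_{\bar{\partial}_E}$ (Theorem \ref{finite}) and unique continuation enter. One could instead argue that the zero locus of a nonzero pseudoholomorphic section has empty interior, but invoking unique continuation for the elliptic Laplacian seems the cleanest route in this generality.
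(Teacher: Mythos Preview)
Your argument is correct. The paper proves injectivity of $u_m^*$ differently: it cites \cite{Z2} for the fact that the singular set $\mathcal S_u$ of a pseudoholomorphic map between equidimensional closed manifolds has finite $(2n-2)$-dimensional Hausdorff measure, so $u(\mathcal S_u)$ has empty interior and $Y\setminus u(\mathcal S_u)$ is dense. If $u_m^*s=0$, then $s$ vanishes on that dense set (since $du$ is invertible off $\mathcal S_u$), and continuity of $s$ finishes. Your route via Sard's theorem plus unique continuation for the elliptic Laplacian $\Delta_{\bar\partial_E}$ is more self-contained, avoiding the nontrivial structure theorem for pseudoholomorphic maps. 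In fact you can simplify further: Sard already tells you that the set of \emph{regular values} is dense in $Y$; if $u_m^*s=0$ then $s$ vanishes at every regular value (pick any preimage, which is necessarily a regular point), hence $s\equiv0$ by continuity alone, and unique continuation is not needed. The paper's approach buys you a sharper description of the singular locus, which is used later in the birational invariance arguments; your approach buys a shorter, reference-free proof of this particular lemma.
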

\begin{proof}
Pullback of sections defines $$u^*_m: H^0(Y, \mathcal K_Y^{\otimes m})\rightarrow H^0(X, \mathcal K_X^{\otimes m})$$ for all $m\ge 1$. Combining the argument of Theorem 5.5 and the result of Theorem 3.8 in \cite{Z2}, we know that the  singularity subset $\mathcal S_u$ has finite $(2n-2)$-dimensional Hausdorff measure. (Theorem 1.4 of \cite{Z2} shows that  $\mathcal S_u$ supports a $J$-holomorphic $1$-subvariety when $n=2$.)

For any $s\in H^0(Y, \mathcal K_Y^{\otimes m})$, if $u^*_m(s)=0$, then the restriction $u_m^*(s)|_{X\setminus \mathcal S_u}=0$ would imply $s|_{Y\setminus u(\mathcal S_u)}=0$. Since $s$ is smooth and $\overline{Y\setminus u(\mathcal S_u)}=Y$, we know $s=0$. Hence $u_m^*$ is injective, which implies the inequalities.
\end{proof}

For any pseudoholomorphic structure $\bar\partial_E$ of a complex vector bundle $E$, it also induces a pseudoholomorphic structure on $E|_D$ for any non-compact embedded $J$-holomorphic curve $D\subset X$. By Koszul-Malgrange theorem, it is holomorphic. Since $D$ is Stein, by Oka's principle, any holomorphic bundle is isomorphic to the product $D\times \C^k$.

Let $s$ be a smooth section of $E$ over a compact almost complex manifold $X$. Then for any point $x\in X$ and a $J$-holomorphic disk $D$ passing through it, we could write $s|_{D}$ as a vector valued complex function $s': D\rightarrow \C^k$. In fact, $s'$ is the composition of $s$ with the projection from $D\times \C^k$ to $\C^k$. Since the  projection is holomorphic, $s$ is $(J, \mathcal J)$-holomorphic if and only if $s'$ is holomorphic.  In other words, $s'$ is a holomorphic function. Later, we will simply write $s$ instead of $s'$ by abuse of notation.

Since there is  no local complex coordinate system for a general almost complex manifold, we use the $J$-fiber-diffeomorphism \cite{T96} to play such a role.

We start with any point $x\in M$, and want to choose an open neighborhood $U$ of $x$. Without loss of generality, as in \cite{T96, Z2}, we can assume the almost complex structure $J$ is on $\mathbb C^2$. It agrees with the standard almost complex structure $J_0$ at the origin, but typically nowhere else.

Denote a family of holomorphic disks $D_w:=\{(\xi, w)| |\xi|<\rho\}$, where $w\in D$. What we get from \cite{T96}, mainly Lemma 5.4, is a diffeomorphism $f: D\times D \rightarrow \mathbb C^2$ onto its image $U$, where $D\subset \mathbb C$ is the disk of radius $\rho$, such that:
\begin{itemize}
 \item For all $w\in D$, $f(D_w)$ is a $J$-holomorphic submanifold containing $(0, w)$.
 \item For all $w\in D$, dist$((\xi, w); f(\xi, w))\le z\cdot \rho\cdot |\xi|$. Here $z$ depends only on $\Omega$ and $J$.
\item For all $w\in D$, the derivatives of order $m$ of $f$ are bounded by $z_m\cdot \rho$, where $z_m$ depends only on $\Omega$ and $J$.
\end{itemize}

We call such a diffeomorphism $J$-fiber-diffeomorphism. We have freedom to choose the ``direction" of these disks by rotating the Gaussian coordinate system. As in \cite{T96, Z2, BZ}, we are also able to choose the center $f(0\times D)$ to be $J$-holomorphic.

With these preparations, we are able to derive the following version of Hartogs' extension theorem for almost complex manifolds.

\begin{thm}\label{hartogs}
Let $(E, \mathcal J)$ be a complex vector bundle with a bundle almost complex structure over the almost complex $4$-manifold $(X, J)$, and $p\in X$. Then any section in $H^0(X\setminus p, (E, \mathcal J)|_{X\setminus p})$ extends to a section in $H^0(X, (E, \mathcal J))$.
\end{thm}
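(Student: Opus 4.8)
The plan is to reduce the extension problem to a one-variable Hartogs-type phenomenon along the foliation by $J$-holomorphic disks provided by the $J$-fiber-diffeomorphism. First I would work in a small neighborhood $U$ of $p$ modeled on $\mathbb C^2$ as in the Taubes construction recalled above: choose a $J$-fiber-diffeomorphism $f:D\times D\to U$ with $p=f(0,0)$, such that each $f(D_w)$ is $J$-holomorphic. Shrinking, I may assume $E|_U$ is trivial and, since each disk $f(D_w)$ is a noncompact embedded $J$-holomorphic curve, the restriction $(E,\mathcal J)|_{f(D_w)}$ is a holomorphic bundle over a Stein disk, hence (by Oka's principle, as the excerpt notes) trivial; so $s$ restricted to $f(D_w)$ is literally a $\mathbb C^k$-valued holomorphic function of $\xi$ for each fixed $w$. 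The section $s\in H^0(X\setminus p,(E,\mathcal J)|_{X\setminus p})$ is then, on $U\setminus p$, a family $s_w(\xi)$ of holomorphic functions on $D\setminus\{(0,0)\}$: holomorphic in $\xi$ on all of $D$ for $w\neq 0$, and holomorphic on the punctured disk $D\setminus\{0\}$ for $w=0$.

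Next I would extend $s$ across $p$ disk by disk. For each $w\neq 0$ there is nothing to do; the only issue is the central disk $D_0$, on which $s_0$ is holomorphic on $D\setminus\{0\}$ and a priori could have a pole or essential singularity at $\xi=0$. To control this I would use the standard Cauchy-integral trick: define $\tilde s$ on $D\times D$ by the Cauchy integral of $s$ over a fixed circle $|\xi|=r<\rho$ in each fiber,
\begin{equation}
\tilde s(\xi,w)=\frac{1}{2\pi i}\oint_{|\zeta|=r}\frac{s_w(\zeta)}{\zeta-\xi}\,d\zeta,\qquad |\xi|<r.
\end{equation}
For $w\neq 0$ this reproduces $s_w$ on $|\xi|<r$ by Cauchy's formula; and $\tilde s$ is visibly smooth in $(\xi,w)$ (the integrand depends smoothly on parameters, since $s$ is smooth away from $p$ and the circle $|\zeta|=r$, $w$ arbitrary, stays in $U\setminus p$) and holomorphic in $\xi$ on each fiber including $w=0$. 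By continuity in $w$ together with agreement for $w\neq 0$, the fiberwise-holomorphic function $\tilde s$ agrees with $s_0$ on an annulus $0<|\xi|<r$ in the central fiber, hence $\tilde s(\cdot,0)$ is the holomorphic extension of $s_0$ across $0$; in particular $s_0$ had a removable singularity. Thus $\tilde s$ is a smooth section of $E$ over $U$ which equals $s$ on $U\setminus p$ (at least on $\{|\xi|<r\}\times D$; after shrinking $U$ this is the whole neighborhood) and is fiberwise holomorphic along every leaf $f(D_w)$. Gluing $\tilde s$ on $U$ with the original $s$ on $X\setminus p$ gives a global smooth section, which I still call $s$.

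It remains to verify that the extended $s$ lies in $H^0(X,(E,\mathcal J))$, i.e. $\bar\partial_{\mathcal J}s=0$ at $p$ as well. By Corollary \ref{holo}, being in the kernel of $\bar\partial_{\mathcal J}$ is equivalent to $s$ being $(J,\mathcal J)$-holomorphic, which by the local/second proof there is equivalent to $s$ being holomorphic along every $J$-holomorphic disk through the point. Since we have freedom — as recalled after the $J$-fiber-diffeomorphism discussion — to rotate the Gaussian coordinates so that the disk $f(0\times D)$ through $p$ points in an arbitrary $J$-invariant $2$-plane direction at $p$, and since for each such choice the above construction makes the extension holomorphic along that central disk, we conclude $s$ is holomorphic along every $J$-holomorphic disk through $p$, hence $(J,\mathcal J)$-holomorphic at $p$. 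Therefore $s\in H^0(X,(E,\mathcal J))$, which proves the theorem.

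I expect the main obstacle to be the smoothness and compatibility bookkeeping in the middle step: one must make sure the Cauchy-integral extension $\tilde s$ is genuinely smooth \emph{across} the central fiber $w=0$ (using that the circle of integration can be taken in $U\setminus p$ uniformly in $w$, so no singularity of $s$ is ever encountered), and that the fiberwise-holomorphic $\tilde s$ really does patch with the original $s$ — i.e. that the two agree on the overlap and hence glue to a $C^\infty$ global section. The dimension-four hypothesis enters essentially, since it is only there that each leaf of the Taubes foliation is a curve on which $(E,\mathcal J)$ is automatically holomorphic (so $(T^*D)^{0,2}=0$ and Koszul–Malgrange applies), and only there that the $J$-fiber-diffeomorphism of \cite{T96} is available.
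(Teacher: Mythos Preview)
Your approach is essentially the paper's: trivialize $E$ over a $J$-fiber-diffeomorphism chart and use a Cauchy integral along the fibers $f(D_w)$ to produce a smooth extension across $p$, then vary the fiber direction to check $(J,\mathcal J)$-holomorphicity in every complex tangent plane. The paper does the same thing, phrasing the Cauchy step via the Laurent coefficients $a_j(z_2)=\frac{1}{2\pi i}\int_{|\xi|=\rho}\frac{s(\xi,z_2)}{\xi^{j+1}}d\xi$ rather than your single Cauchy kernel; these are equivalent.

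Two small points. First, in your last paragraph you write ``the disk $f(0\times D)$'' when you clearly mean the fiber $f(D_0)=f(D\times\{0\})$ through $p$; your construction shows holomorphicity in $\xi$ (along the fibers), not along the transverse center $f(0\times D)$, so keep the notation straight. Second, you need the \emph{single} smooth trivialization of $E|_U$ to be holomorphic along every fiber $f(D_w)$ for the Cauchy integral to output a fiberwise-holomorphic section---the separate Oka trivializations along each $f(D_w)$ do not a priori glue smoothly in $w$. The paper handles this explicitly (trivialize first along the $J$-holomorphic center $f(0\times D)$, then extend holomorphically along each $f(D_w)$); you should invoke that. With those fixes, your uniqueness-by-continuity argument (any two extensions agree off $p$, hence at $p$) cleanly replaces the paper's explicit consistency check between the $D_0$ and $0\times D$ values, and the rest goes through.
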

\begin{proof}
Near $p$, as in \cite{Z2}, we choose a $J$-fiber-diffeomorphism of a neighborhood $U$ of $p$, $f: D\times D\rightarrow U$, such that $f(0\times D)$ and $f(D\times w), \forall w\in D$ are embedded $J$-holomorphic disks. By possibly shrinking $U$, our complex vector bundle $(E, \mathcal J)$ could be trivialized such that each section of it (on a subset of $U$) restricts to $f(D_w)$ and $f(0\times D)$ are complex vector valued functions. We can achieve it by first choose the trivialization along $f(0\times D)$ and then fiberwise along each $f(D_w)$.

Let $s\in H^0(X\setminus p, (E, \mathcal J)|_{X\setminus p})$. By choosing the above trivialization and the previous discussion, $s$ is a vector valued holomorphic function along each $D_w$ when $w\ne 0$, $(D\setminus \{0\})\times 0$ and $0\times (D\setminus \{0\})$. We use Cauchy integration formula to define  $$a_j(z_2)=\frac{1}{2\pi i}\int_{|\xi|=\rho}\frac{s(\xi, z_2)}{\xi^{j+1}}d\xi, \, \,\, \forall j\in \mathbb Z.$$ It is a smooth  (vector valued) function and $a_0(z_2)=s(0, z_2)$ when $z_2\ne 0$. Hence, in particular, $a_0(z_2)$ is holomorphic on $D\setminus \{0\}$. We let $a_j(0)=\lim_{z_2\rightarrow 0} a_j(z_2)$. Since for fixed $z_2\ne 0$, $ s(\xi, z_2)$ is holomorphic for $\xi \in D$, we know $a_{-j}(z_2)=0$ for $j>0$. By the continuity of $s$, we know $a_{-j}(0)=0, \forall j>0$. Hence, $s(\xi, 0)=\sum_{j=-\infty}^{\infty}a_j(0) \xi^j=\sum_{j=0}^{\infty}a_j(0) \xi^j$ is also holomorphic at $\xi=0$ with value $a_0(0)$ at $\xi=0$. In particular, $a_0(0)=\frac{1}{2\pi i}\int_{|\xi|=\rho}\frac{s(\xi, 0)}{\xi}d\xi$. Since $a_0(z_2)=s(0, z_2)$ is holomorphic when $z_2\ne 0$, the partial derivative $$\frac{\partial}{\partial \bar z_2}a_0(z_2)=\frac{1}{2\pi i}\int_{|\xi|=\rho}\frac{\frac{\partial}{\partial \bar z_2}s(\xi, z_2)}{\xi}d\xi=0.$$ This extends to $z_2= 0$ since $s$ is smooth. Hence, $a_0(z_2)=s(0, z_2)$ is also holomorphic at $z_2=0$.

To summarize, what we have proved in the above is that the extensions of holomorphic functions $s(0, z_2)$ and $s(z_1, 0)$ on $0\times (D\setminus\{0\})$ and $(D\setminus\{0\})\times 0$ to $(0, 0)$  have the same value $a_0(0)$, and are holomorphic at both disks $0\times D$ and $D_0$.  As in \cite{Z2}, we can choose the center $f(0\times D)$ of the $J$-fiber diffeomorphism  (that transverse to $f(D_0)$) to be (a subdisk of) any given $J$-holomorphic disk. Moreover, we can also choose a family of disks passing through $p$ whose complex tangent directions at $p$ form a disk around a given direction $\kappa$ in $\mathbb CP^1$. Moreover, each of them is the $D_0$ fiber of a $J$-fiber diffeomorphism (see Lemma 5.8 of \cite{T96} or Lemma 3.10 of \cite{Z2}).  Since $\mathbb CP^1$ is compact, we can choose finite many such families such that their union covers a neighborhood of $p$, and their tangent directions cover $\mathbb CP^1$.

We choose $J$-fiber diffeomorphisms around $p$ whose fiber passing through $p$ lying in the above union of families and take the center of the foliation to be either $f(0\times D)$ or $f(D_0)$ in our above construction. By this process, we know all the disks in the above union of families have the same extended value at $p$, and are holomorphic at all the directions. Hence, our section $s\in H^0(X\setminus p, (E, \mathcal J)|_{X\setminus p})$ is extended over $p$ to a section in $H^0(X, (E, \mathcal J))$.
\end{proof}

We are ready to show the Kodaira dimension $\kappa^J$ is a birational invariant for almost complex $4$-manifolds.

\begin{thm}\label{Kodbir}
Let $u: (X, J)\rightarrow (Y, J_Y)$ be a degree one pseudoholomorphic map between closed almost complex $4$-manifolds. Then $P_m(X, J)=P_m(Y, J_Y)$ and thus $\kappa^{J}(X)= \kappa^{J_Y}(Y)$.
\end{thm}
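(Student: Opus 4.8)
The plan is to establish the reverse inequality to Lemma \ref{kjsurj}, namely $P_m(X, J) \le P_m(Y, J_Y)$, since combined with Lemma \ref{kjsurj} (applied to the surjective degree one map $u$) this forces equality of all plurigenera, hence equality of the Kodaira dimensions by the very definition of $\kappa^J$ as a $\limsup$ of $\log P_{L^{\otimes m}}/\log m$. So the entire content is: every pseudoholomorphic pluricanonical section on $X$ descends to one on $Y$. First I would invoke the structure theorem for degree one pseudoholomorphic maps between almost complex $4$-manifolds (Theorem 1.5 of \cite{Z2}): such a $u$ is, up to pseudoholomorphic isomorphism, a composition of blow-downs, so that the exceptional set $\mathcal S_u$ where $u$ fails to be a local diffeomorphism is a $J$-holomorphic $1$-subvariety and $Y \setminus u(\mathcal S_u)$ is open dense with $X \setminus \mathcal S_u \cong Y \setminus u(\mathcal S_u)$ via $u$.

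Next, given $s \in H^0(X, \mathcal K_X^{\otimes m})$, I would push it forward over the locus where $u$ is a biholomorphism. On $X \setminus \mathcal S_u$ the map $u$ is a pseudoholomorphic diffeomorphism onto $Y \setminus u(\mathcal S_u)$, so $(u^{-1})^*$ carries $s|_{X \setminus \mathcal S_u}$ to a section $t \in H^0(Y \setminus u(\mathcal S_u), \mathcal K_Y^{\otimes m})$; here one uses that $\mathcal K_X = u^*\mathcal K_Y$ away from the exceptional locus and that $\bar\partial_m$ is natural under pseudoholomorphic diffeomorphisms, exactly as in the Remark after Definition 2.2. The remaining task is to extend $t$ across the complex-codimension-one set $u(\mathcal S_u) \subset Y$. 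This is where Theorem \ref{hartogs} enters: although $u(\mathcal S_u)$ is not a single point, it is locally (near a generic point) a $J$-holomorphic curve, and near such a point one can foliate a neighborhood by $J$-holomorphic disks transverse to it — the section $t$ is bounded (indeed, $s$ is smooth on compact $X$, so $t$ is bounded near $u(\mathcal S_u)$), hence extends disk-by-disk by the one-variable Riemann removable singularity theorem, and the family-of-disks argument of Theorem \ref{hartogs} (Cauchy-integral coefficients $a_j(z_2)$, checking $\partial_{\bar z_2} a_0 = 0$, covering $\mathbb{CP}^1$ of tangent directions) shows the extension is genuinely $(J, \mathcal J)$-holomorphic in all complex directions. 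One first handles the smooth locus of $u(\mathcal S_u)$ this way, obtaining an extension over $Y$ minus a set of real codimension $\ge 4$ (the finitely many singular points of the curve together with points where it could be worse), and then removes those isolated points using Theorem \ref{hartogs} verbatim.

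The output is a section $\tilde t \in H^0(Y, \mathcal K_Y^{\otimes m})$ with $u^*_m(\tilde t) = s$ by construction (the two agree on the dense open set $X \setminus \mathcal S_u$ and both are smooth). Hence the pullback map $u^*_m \colon H^0(Y, \mathcal K_Y^{\otimes m}) \to H^0(X, \mathcal K_X^{\otimes m})$, already injective by Lemma \ref{kjsurj}, is also surjective, so it is an isomorphism and $P_m(X,J) = P_m(Y,J_Y)$ for every $m \ge 1$. The definition of $\kappa^J$ then immediately gives $\kappa^{J}(X) = \kappa^{J_Y}(Y)$.

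I expect the main obstacle to be the bookkeeping around $u(\mathcal S_u)$: one must know precisely that it is a $J$-holomorphic $1$-subvariety of $Y$ (so that the foliation-by-disks machinery applies near its smooth points) and that $u$ restricts to an isomorphism off it, and then carefully reduce the extension problem in two stages (across the codimension-one smooth part, then across the residual finite set) so that Theorem \ref{hartogs} can be applied as a black box. Identifying $\mathcal K_X$ with $u^*\mathcal K_Y$ compatibly with the $\bar\partial_m$ operators, and checking that the disk-by-disk extension assembles into a smooth — not merely locally bounded — pseudoholomorphic section, are the points where care is needed; everything else is formal.
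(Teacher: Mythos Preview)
Your overall strategy --- Lemma \ref{kjsurj} for one inequality, then push a section forward off the exceptional locus and extend via Theorem \ref{hartogs} for the other --- is exactly the paper's. But you have misread the structure theorem: Theorem 1.5 of \cite{Z2}, as the paper invokes it, produces a \emph{finite} set $Y_1\subset Y$ such that $u:X\setminus u^{-1}(Y_1)\to Y\setminus Y_1$ is a diffeomorphism. A composition of blow-downs collapses each exceptional curve to a \emph{point}, so while $\mathcal S_u\subset X$ is indeed a $J$-holomorphic $1$-subvariety, its image $u(\mathcal S_u)\subset Y$ is a finite set, not a curve of complex codimension one. Your intermediate ``extend across the smooth locus of a curve'' step is therefore vacuous, and the paper simply applies Theorem \ref{hartogs} point-by-point over $Y_1$ and is done.

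The boundedness claim underpinning that step is also false in general and worth flagging. The pushforward $t=(u^{-1})^*s$ of an $m$-canonical section involves the $m$-th power of the Jacobian of $u^{-1}$, which blows up near each point of $Y_1$ because $\det du$ vanishes along the exceptional curves; in the complex model a smooth $f\,(dz\wedge dt)^{\otimes m}$ on the blow-up pushes to $f(Z_1,Z_2/Z_1)\,Z_1^{-m}(dZ_1\wedge dZ_2)^{\otimes m}$ downstairs. This is harmless precisely because Theorem \ref{hartogs} is a genuine Hartogs-type statement requiring no growth control at the puncture --- but it means the Riemann-removable-singularity argument you sketch across a hypothetical curve would not have gone through had such a curve actually been there.
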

\begin{proof}
First, by Corollary \ref{holo}, we know that any element in $H^0(X, \mathcal K_X^{\otimes m})$ is $(J, \mathcal J_{J})$-holomorphic where $\mathcal J_J$ is the bundle almost complex structure corresponding to $\bar{\p}_m$.

By Lemma \ref{kjsurj}, we only need to show $u^*_m$ is surjective.
By Theorem 1.5 of \cite{Z2}, we know there is a finite set $Y_1\subset Y$ such that $$u: X\setminus u^{-1}(Y_1) \rightarrow Y\setminus Y_1$$ is a diffeomorphism.  For $\sigma\in H^0(X, \mathcal K_X^{\otimes m})$, we could pull it back by $u^{-1}|_{Y\setminus Y_1}$ to get $(u^{-1})^*(\sigma)\in H^0(Y\setminus Y_1, \mathcal K_{Y\setminus Y_1}^{\otimes m})$. By Theorem \ref{hartogs}, we could extend point-by-point over $Y_1$ to get a unique element in $H^0(Y, \mathcal K_Y^{\otimes m})$.

 Hence, $u_m^*$ is surjective and we complete the proof.
\end{proof}

There are also other birational invariants. Since $\Lambda^{p,1}\cong (T^*X)^{0,1}\otimes \Lambda^{p,0}$, the $\bar{\p}$ operator induces a natural pseudoholomorphic structure on $\Lambda^{p,0}$ for $0\leq p\leq n$. By Proposition \ref{finsecE}, $H^0(X, \Omega^p_X):=H^0(X, \Omega^p(\mathcal O))=\mathcal H_{\bar{\p}}^{(p,0)}(X, \mathcal O)$ is finite dimensional. We denote $h^{p,0}(X):=\dim H^0(X, \Omega^p(\mathcal O))$. For a pseudoholomorphic map $u: (X, J)\rightarrow (Y, J_Y)$ between closed almost complex $2n$-manifolds and any $0\le p\le n$, pullback of sections defines $u^*: H^0(Y, \Omega^p_Y)\rightarrow H^0(X, \Omega^p_X)$. When $u$ is surjective, by the same argument as Lemma \ref{kjsurj}, we have

\begin{lem}\label{p0surj}
Let $u: (X, J)\rightarrow (Y, J_Y)$ be a surjective pseudoholomorphic map between closed almost complex $2n$-manifolds. Then $u^*: H^0(Y, \Omega^p_Y)\rightarrow H^0(X, \Omega^p_X)$ is injective and $h^{p,0}(X)\ge h^{p,0}(Y)$ for any $0\le p\le n$.
\end{lem}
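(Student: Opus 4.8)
The plan is to mirror the proof of Lemma \ref{kjsurj} verbatim, replacing the pluricanonical bundle $\mathcal K_X^{\otimes m}$ by the bundle $\Omega^p_X = \Lambda^{p,0}X$ with its natural pseudoholomorphic structure $\bar\partial$ coming from the splitting $\Lambda^{p,1}\cong (T^*X)^{0,1}\otimes\Lambda^{p,0}$. First I would recall from the discussion preceding Proposition \ref{finsecE} that $H^0(X,\Omega^p_X) = \mathcal H^{(p,0)}_{\bar\partial}(X,\mathcal O)$ is a space of smooth $(p,0)$-forms annihilated by $\bar\partial$, and by Corollary \ref{holo} these are exactly the $(J,\mathcal J)$-holomorphic sections for the induced bundle almost complex structure. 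The pullback $u^*$ on $(p,0)$-forms is well-defined: since $u$ is pseudoholomorphic, $u^*$ preserves the type decomposition (as noted in the Remark after the definition of $H^0(X,\mathcal K^{\otimes m})$, $u^*\Omega^{p,q}(Y)\subseteq\Omega^{p,q}(X)$) and commutes with $\bar\partial$, so $u^*$ indeed maps $H^0(Y,\Omega^p_Y)$ into $H^0(X,\Omega^p_X)$.

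The core argument is the injectivity, which follows the structure of Lemma \ref{kjsurj}. By combining Theorem 5.5 and Theorem 3.8 of \cite{Z2}, the singularity set $\mathcal S_u\subset X$ where $u$ fails to be a local diffeomorphism has finite $(2n-2)$-dimensional Hausdorff measure; in particular $X\setminus\mathcal S_u$ is open and dense, $u$ is a local diffeomorphism on it, and $u(\mathcal S_u)$ has empty interior so that $\overline{Y\setminus u(\mathcal S_u)}=Y$. Suppose $\sigma\in H^0(Y,\Omega^p_Y)$ with $u^*\sigma = 0$. On $X\setminus\mathcal S_u$ the map $u$ is a local diffeomorphism, so $u^*$ is fiberwise an isomorphism on $(p,0)$-forms there; hence $u^*\sigma|_{X\setminus\mathcal S_u}=0$ forces $\sigma|_{Y\setminus u(\mathcal S_u)}=0$. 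Since $\sigma$ is a smooth form and $Y\setminus u(\mathcal S_u)$ is dense in $Y$, we conclude $\sigma = 0$. Therefore $u^*$ is injective and $h^{p,0}(X)\ge h^{p,0}(Y)$.

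I expect the only genuine subtlety — the one place where a word of care beyond Lemma \ref{kjsurj} is needed — to be the fiberwise nondegeneracy of $u^*$ on $\Lambda^{p,0}$ at points of $X\setminus\mathcal S_u$. For the canonical bundle $\mathcal K = \Lambda^{n,0}$ this is immediate: where $du$ is invertible, $\Lambda^n du^*$ is a nonzero scalar. For general $p$ one uses the same fact, namely that if a linear isomorphism $du_x\colon T_xX\to T_{u(x)}Y$ is $(J,J_Y)$-complex-linear then the induced maps $\Lambda^p(du_x^*)$ on $\Lambda^{p,0}$ are isomorphisms; this is elementary linear algebra. Everything else is a direct transcription of Lemma \ref{kjsurj}, so no new geometric input is required, and the proof can be left as short as ``by the same argument as Lemma \ref{kjsurj}'' as the authors indicate.
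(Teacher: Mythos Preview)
Your proposal is correct and follows exactly the approach the paper indicates: it is the argument of Lemma \ref{kjsurj} transported verbatim to $\Omega^p$, with the only extra remark being the (elementary) fiberwise injectivity of $\Lambda^p(du^*)$ on $\Lambda^{p,0}$ at points where $du$ is invertible. The paper itself gives no separate proof and simply writes ``by the same argument as Lemma \ref{kjsurj}'', which is precisely what you have spelled out.
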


We can also show that $h^{p,0}$ are birational invariants in dimension $4$. In fact, the only one which does not follow from Theorem \ref{Kodbir} is the irregularity $h^{1,0}$.

\begin{thm}\label{hodgebir}
Let $u: (X, J)\rightarrow (Y, J_Y)$ be a degree one pseudoholomorphic map between closed almost complex $4$-manifolds. Then $h^{p,0}(X)=h^{p,0}(Y)$ for any $0\le p\le 2$.
\end{thm}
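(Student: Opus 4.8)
The plan is to reduce the cases $p=0$ and $p=2$ to Theorem~\ref{Kodbir} and its proof method, and to treat $p=1$ separately since it is the genuinely new content. For $p=0$, a section of $\Omega^0(\mathcal O)=\mathcal O$ in the kernel of $\bar\partial$ is just a $\bar\partial$-closed (hence, by Corollary~\ref{holo}, pseudoholomorphic) function, and since $X$ and $Y$ are closed and connected, $h^{0,0}(X)=h^{0,0}(Y)=1$. For $p=2=n$, the bundle $\Omega^2(\mathcal O)$ is exactly $\mathcal K$ with its standard pseudoholomorphic structure, so $h^{2,0}(X)=P_1(X,J)=P_1(Y,J_Y)=h^{2,0}(Y)$ by Theorem~\ref{Kodbir}. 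So the only work is $p=1$.

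For $p=1$, I would mimic the proof of Theorem~\ref{Kodbir} verbatim, replacing $\mathcal K_X^{\otimes m}$ by $\Omega^1_X=\Lambda^{1,0}X$ and the operator $\bar\partial_m$ by the pseudoholomorphic structure $\bar\partial$ on $\Lambda^{1,0}$ described just before Lemma~\ref{p0surj}. By Lemma~\ref{p0surj} (with $n=2$, $p=1$), the pullback $u^*\colon H^0(Y,\Omega^1_Y)\to H^0(X,\Omega^1_X)$ is injective, so it suffices to show it is surjective. By Theorem~1.5 of \cite{Z2}, there is a finite set $Y_1\subset Y$ with $u\colon X\setminus u^{-1}(Y_1)\to Y\setminus Y_1$ a diffeomorphism. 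Given $\sigma\in H^0(X,\Omega^1_X)$, transport it by $(u^{-1})^*$ to a section of $\Omega^1$ over $Y\setminus Y_1$ lying in the kernel of $\bar\partial$ there; then apply Theorem~\ref{hartogs} with $(E,\mathcal J)=(\Lambda^{1,0}Y,\mathcal J_{\bar\partial})$ to extend across the finite set $Y_1$ point-by-point, obtaining a genuine element of $H^0(Y,\Omega^1_Y)$ whose pullback is $\sigma$. This gives surjectivity, hence $h^{1,0}(X)=h^{1,0}(Y)$.

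The one point needing care is that Theorem~\ref{hartogs} is stated for an arbitrary complex vector bundle with a bundle almost complex structure over an almost complex $4$-manifold, and a section being in $H^0$ there means being in the kernel of $\bar\partial_{\mathcal J}$; so I must make sure the pseudoholomorphic structure on $\Lambda^{1,0}$ induced by $\bar\partial$ (via the identification $\Lambda^{1,1}\cong(T^*X)^{0,1}\otimes\Lambda^{1,0}$) is exactly the one meant, and that $(u^{-1})^*$ intertwines $\bar\partial$ on $Y\setminus Y_1$ with $\bar\partial$ on $X\setminus u^{-1}(Y_1)$. The latter is the naturality of $\bar\partial$ under pseudoholomorphic diffeomorphisms, which is the same functoriality already used (in the remark after Definition~3.2 and in the proof of Lemma~\ref{kjsurj}) for $\mathcal K^{\otimes m}$; since $\Lambda^{1,0}$ is itself a natural bundle built from $J$, the argument is identical. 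The main obstacle, such as it is, is purely bookkeeping: verifying that ``section in the kernel of $\bar\partial$ on $\Lambda^{1,0}$'' matches the hypothesis of Theorem~\ref{hartogs} under the stated bundle-with-$\mathcal J$ formalism, after which the proof of Theorem~\ref{Kodbir} carries over word for word.

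\begin{proof}
For $p=0$ we have $\Omega^0(\mathcal O)=\mathcal O$, and a section in $H^0(X,\mathcal O)$ is a smooth function $f$ with $\bar\partial f=0$; since $X$ is compact and connected, such $f$ is constant, so $h^{0,0}(X)=1=h^{0,0}(Y)$. For $p=2=\dim_{\mathbb C}X$, $\Omega^2(\mathcal O)=\mathcal K$ with its standard pseudoholomorphic structure, so $h^{2,0}(X)=\dim H^0(X,\mathcal K)=P_1(X,J)$, and likewise for $Y$; hence $h^{2,0}(X)=h^{2,0}(Y)$ by Theorem~\ref{Kodbir}.

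It remains to treat $p=1$. By Lemma~\ref{p0surj}, $u^*\colon H^0(Y,\Omega^1_Y)\to H^0(X,\Omega^1_X)$ is injective, so it suffices to prove it is surjective. By Theorem~1.5 of \cite{Z2} there is a finite set $Y_1\subset Y$ such that
$$u\colon X\setminus u^{-1}(Y_1)\longrightarrow Y\setminus Y_1$$
is a diffeomorphism. Since this diffeomorphism is pseudoholomorphic, it intertwines the operator $\bar\partial$ on $\Lambda^{1,0}$ over the two open sets. Thus for $\sigma\in H^0(X,\Omega^1_X)$ the pullback $(u^{-1})^*\sigma$ lies in $H^0\bigl(Y\setminus Y_1,\Omega^1_{Y\setminus Y_1}\bigr)$, i.e., it is a $(J_Y,\mathcal J)$-holomorphic section of $\Lambda^{1,0}Y$ over $Y\setminus Y_1$, where $\mathcal J$ is the bundle almost complex structure corresponding to $\bar\partial$ on $\Lambda^{1,0}$ (Corollary~\ref{holo}). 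Applying Theorem~\ref{hartogs} once for each point of the finite set $Y_1$ extends $(u^{-1})^*\sigma$ to an element of $H^0(Y,\Omega^1_Y)$; by the injectivity statement this extension is unique, and its pullback under $u$ agrees with $\sigma$ on the dense open set $X\setminus u^{-1}(Y_1)$, hence equals $\sigma$ by continuity. Therefore $u^*$ is surjective, and $h^{1,0}(X)=h^{1,0}(Y)$.
\end{proof}
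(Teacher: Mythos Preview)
Your proof is correct and follows essentially the same approach as the paper. The only cosmetic difference is that the paper treats all $0\le p\le 2$ uniformly in a single argument (Corollary~\ref{holo} plus Lemma~\ref{p0surj} plus Theorem~\ref{hartogs} applied to $\Lambda^{p,0}$), whereas you split off $p=0$ as trivial and $p=2$ as Theorem~\ref{Kodbir} before running the identical Hartogs-extension argument for $p=1$.
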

\begin{proof}
First, by Corollary \ref{holo}, we know that any element in $H^0(X, \Omega^p_X)$ is $(J, \mathcal J_{J})$-holomorphic where $\mathcal J_J$ is the bundle almost complex structure on $\Lambda^{p,0}$ corresponding to he natural pseudoholmorphic structure induced by $\bar{\p}$.

By Lemma \ref{p0surj}, we only need to show $u^*$ is surjective.
By Theorem 1.5 of \cite{Z2}, we know there is a finite set $Y_1\subset Y$ such that $$u: X\setminus u^{-1}(Y_1) \rightarrow Y\setminus Y_1$$ is a diffeomorphism.  For $\sigma\in H^0(X, \Omega^p_X)$, we could pull it back by $u^{-1}|_{Y\setminus Y_1}$ to get $(u^{-1})^*(\sigma)\in H^0(Y\setminus Y_1, \Omega^p_{Y\setminus Y_1})$, which are the pseudoholomorphic sections of $\Lambda^{p,0}(Y)$ over $Y\setminus Y_1$. By Theorem \ref{hartogs}, we could extend it point-by-point over $Y_1$ to get a unique element in $H^0(Y, \Omega^p_Y)$.
 Hence, $u^*$ is surjective and we complete the proof.
\end{proof}
We would like to remark that the dimension of the $J$-anti-invariant cohomology $H_J^-(X, \mathbb R)$ defined in \cite{LZcag} is also a birational invariant as shown in \cite{BZ}.

\section{Examples}\label{exam}
In this section, we give some explicit examples on the calculation of the almost complex plurigenera, the Kodaira dimension $\kappa^J$ and the irregularity. As we have seen in Section \ref{birational}, all of them are birational invariant on 4-manifolds.  However, different from the integrable case, they are no longer deformation invariants. This is easy to see by deforming an integrable almost complex structure of a surface of general type as shown in the introduction. This argument does not quite extend to the case when the canonical class is torsion. Our first two examples study such explicit deformations on Kodaira-Thurston surface and $4$-torus.

In Section \ref{bigex}, we show that there are examples of compact $2n$-dimensional nonintegrable almost complex manifolds with Kodaira dimension $\{-\infty, 0, 1, \cdots, n-1\}$ for $n\geq 2$ (Theorem \ref{niKod}).

\subsection{The Kodaira-Thurston surface}\label{kt}
Consider the Kodaira-Thurston surface $X=S^1\times (\Gamma \backslash \text{Nil}^3)$, where $\text{Nil}^3$ is the Heisenberg group
 $$\text{Nil}^3=\{ A\in GL(3, \mathbb{R})| A=\begin{pmatrix} 1&x&z\\0&1&y\\0&0&1\end{pmatrix}\}$$
 and $\Gamma$ is the subgroup in $\text{Nil}^3$ consisting of element with integer entries, acting by left multiplication (see \cite{TW}). $X$ is homogeneous and has trivial tangent and cotangent bundle. An invariant frame of the tangent bundle is given by $$\frac{\p}{\p t}, \hspace{4mm} \frac{\p}{\p x},\hspace{4mm} \frac{\p}{\p y}+x\frac{\p}{\p z}, \hspace{4mm}\frac{\p}{\p z},$$
 where $t$ is the coordinate of $S^1$. The corresponding dual invariant coframe is given by
 $$dt, \hspace{4mm}dx,\hspace{4mm} dy,\hspace{4mm} dz-xdy.$$
For any $a\neq 0\in \mathbb{R}$, define the almost complex structures $J_a$ by:
$$J_a(\frac{\p}{\p t})=\frac{\p}{\p x}, \hspace{2mm} J_a(\frac{\p}{\p x})=-\frac{\p}{\p t}, \hspace{2mm} J_a(\frac{\p}{\p y}+x\frac{\p}{\p z})=\frac{1}{a}\frac{\p}{\p z},\hspace{2mm} J_a(\frac{\p}{\p z})=-a(\frac{\p}{\p y}+x\frac{\p}{\p z}).$$
We can compute the Nijenhuis tensor to get $N(\frac{\p}{\p x}, \frac{\p}{\p z})=a^2(\frac{\p}{\p y}+x\frac{\p}{\p z})\neq 0$. Therefore $J_a$ is not integrable by the Newlander-Nirenberg theorem \cite{NN}. As $(T^*X)^{1,0}$ is spanned by $\phi_1=dt+idx, \phi_2=dy+ia(dz-xdy)$, any section of $\mathcal K$ can be written as $s=f\phi_1\wedge\phi_2$. Since $$d\phi_2=-iadx\wedge dy=-\frac{a}{4}(\phi_1\wedge\bar{\phi}_2-\bar{\phi}_1\wedge \phi_2+\bar{\phi}_1\wedge\bar{\phi}_2-\phi_1\wedge\phi_2),$$ we have
\begin{align*}
\bar{\p}(\phi_1\wedge\phi_2)&=-\phi_1\wedge\bar{\p}\phi_2=\frac{a}{4}\phi_1\wedge(\phi_1\wedge\bar{\phi}_2-\bar{\phi}_1\wedge\phi_2)\\
&=\frac{a}{4}\bar{\phi}_1\wedge\phi_1\wedge\phi_2.
\end{align*}
So $\bar{\p}s=0$ if and only if \begin{align}\bar{\p}f+\frac{a}{4}f\bar{\phi}_1=0. \label{09}\end{align}
Let $w=t+ix, \frac{\p}{\p \bar{w}}=\frac{1}{2}(\frac{\p}{\p t}+i\frac{\p}{\p x})$ and $V=\frac{1}{2}((\frac{\p}{\p y}+x\frac{\p}{\p z})+i\frac{1}{a}\frac{\p}{\p z})$. Then $\frac{\p}{\p \bar{w}}, V$ are dual vectors of $\bar{\phi}_1, \bar{\phi}_2$. From (\ref{09}) we have
\begin{align}\label{04}&\frac{\p f}{\p \bar{w}}+\frac{a}{4}f=0\\
\label{5}&V(f)=0.
\end{align}
Let $f=f_1+if_2$, where $f_1, f_2$ are smooth real functions on $X$. From (\ref{5}) we get that $\bar{V}Vf=0$ where $\bar{V}$ is the conjugate of $V$. As $\bar{V}V=\f{1}{4}((\frac{\p}{\p y}+x\frac{\p}{\p z})^2+(\frac{1}{a}\frac{\p}{\p z})^2)$, we obtain
\begin{align}\label{8}
 &\frac{\p^2 f_1}{\p y^2}+2x\frac{\p^2f_1}{\p y\p z}+(x^2+\f{1}{a^2})\frac{\p^2f_1}{\p z^2}=0,\\
\label{9}
&\frac{\p^2 f_2}{\p y^2}+2x\frac{\p^2f_2}{\p y\p z}+(x^2+\f{1}{a^2})\frac{\p^2f_2}{\p z^2}=0
\end{align}
Consider the fibration $\rho: X\rightarrow T^2=\mathbb{R}^2/\mathbb{Z}^2$ given by
$$\rho([t, x, y, z])=[t, x].$$
The fiber of $\rho$ is a torus with coordinate $(y,z)$. \eqref{8}, \eqref{9} is strictly elliptic without zero order term when viewing $f$ as a function of $y, z$. As the fiber is compact, by the maximum principle $f$ is constant in each fiber. We can push down $f$ to a function on the base $T^2$ with $(t, x)$ coordinate. To solve the equation (\ref{04}) on $T^2$, consider the Fourier series $$\mathcal{F}(f)=\sum_{(k,l)\in \mathbb{Z}^2}f_{k,l}e^{2\pi i(kt+lx)},\ f_{k,l}=\int_{T^2} f(t,x)e^{-2\pi i(kt+lx)}dtdx.$$ For smooth function $f$, $f=0$ if and only if $f_{k, l}=0, \forall (k,l)\in \mathbb Z^2$ by the completion of the series $\{e^{2\pi i(kt+lx)}\}$.
Apply $\mathcal{F}$ to \eqref{04}, we get
$$\sum_{(k,l)\in \mathbb{Z}^2} (\frac{a}{4}+\pi(ik-l))f_{k,l} e^{2\pi i(kt+lx)}=0.$$
If $a\notin 4\pi\mathbb{Z}$, then $\frac{a}{4}+\pi(k-il)\neq 0$ for any $(k,l)\in \mathbb{Z}^2$. So $f_{k,l}=0$ and $f=0$. If $a=4l\pi$ for some $l\in\mathbb{Z}\backslash \{0\}$, then $f=Ce^{2\pi i lx}$ are the solutions. Therefore we get
$$P_1(X, J_a)=\begin{cases} 0, a\notin 4\pi\mathbb{Z}\\
1, a\in 4\pi\mathbb{Z}\end{cases}.$$
For $m\geq 2$, assume that $s=f(\phi_1\wedge\phi_2)^{\otimes m}$ is a holomorphic section of $\mathcal K^{\otimes m}$. Then $$\bar{\p}_m s=(\bar{\p}f + \frac{ma}{4}f\bar{\phi}_1)(\phi_1\wedge\phi_2)^{\otimes m}=0.$$ The same computation from above shows that $f$ is constant on $(y,z)$ and satisfying
$$\frac{\p f}{\p \bar{w}}+\frac{ma}{4}f=0.$$
Using Fourier transform, we get that if $a\notin \frac{4}{m}\pi\mathbb{Z}$, then $f=0$; if $a=\frac{4l\pi}{m}$ for some $l\in \mathbb{Z}\backslash \{0\}$, then $f=Ce^{2\pi i lx}$. So
\begin{equation}\label{KTpm}
P_m(X, J_a)=\begin{cases} 0, a\notin \frac{4}{m}\pi\mathbb{Z}\\
1, a\in \frac{4}{m}\pi\mathbb{Z}.\end{cases}
\end{equation}

\vspace{.2cm}
To compute the irregularity $h^{1,0}(X)$, assume that $\gamma=g_1\phi_1+g_2\phi_2\in H^0(X,\Omega_X)$. As $d\phi_1=0$ and $\bar{\p}\phi_2=-\frac{a}{4}\phi_1\wedge\bar{\phi}_2+\frac{a}{4}\bar{\phi}_1\wedge \phi_2$, from $\bar{\p}\gamma=0$ we obtain
\begin{align}\bar{\p}g_1+\frac{a}{4}g_2\bar{\phi}_2=0 \label{18} \\ \bar{\p}g_2-\frac{a}{4}g_2\bar{\phi}_1=0 \label{19} \end{align}
(\ref{19}) is in the same form with \eqref{09}. So we have $V(g_2)=0$ and then $g_2$ is independent of $y,z$. (\ref{18}) is equivalent to
 \begin{align}
  \label{020} \frac{\p g_1}{\p \bar{w}}=0\\V(g_1)+\frac{a}{4}g_2=0  \label{021}
 \end{align}
 From \eqref{020} we have that $g_1$ is independent of $t, x$. As $g_2$ is independent of $y,z$, composing $\bar{V}$ to (\ref{021}) we get that $\bar{V}V(g_1)=0$. Therefore, $g_1$ is a constant. Returning to (\ref{021}) we get that $g_2=0$. Therefore $\gamma=c\phi_1$ for some constant $c$ and $h^{1,0}(X)=1$.

In conclusion, we have

\begin{prop}\label{6.1}
For any $a\neq 0 \in \mathbb{R}$, there is a nonintegrable almost complex structure $J_a$ on $X=S^1\times (\Gamma \backslash \text{Nil}^3)$ such that $h^{1,0}(X)=1$ and
$$\kappa^{J_a}(X)=\begin{cases}\begin{array}{cl} -\infty, &\ a\notin \pi\mathbb{Q}\\
0, &\ a\in \pi\mathbb{Q}\backslash\{0\}\end{array}\end{cases}$$
\end{prop}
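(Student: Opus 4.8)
The plan is to assemble Proposition \ref{6.1} directly from the plurigenera and irregularity computations already carried out in this subsection, organizing them into the two claimed cases. Almost everything needed has been established: the non-integrability of $J_a$ for every $a\neq 0$ follows from the Nijenhuis tensor computation $N(\frac{\p}{\p x},\frac{\p}{\p z})=a^2(\frac{\p}{\p y}+x\frac{\p}{\p z})\neq 0$ together with Newlander--Nirenberg; the irregularity $h^{1,0}(X)=1$ is exactly the conclusion of the $\gamma=g_1\phi_1+g_2\phi_2$ analysis ending with $\gamma=c\phi_1$; and the plurigenera are given by \eqref{KTpm}, namely $P_m(X,J_a)=1$ if $a\in\frac{4}{m}\pi\mathbb{Z}$ and $0$ otherwise. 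So the only genuine content left is translating the arithmetic condition ``$a\in\frac{4}{m}\pi\mathbb{Z}$ for some $m\geq 1$'' into the dichotomy ``$a\in\pi\mathbb{Q}$ versus $a\notin\pi\mathbb{Q}$'' and then feeding that into the definition of $\kappa^{J_a}$.

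First I would record the non-integrability and the irregularity, citing the two computations above. Then I would handle the Kodaira dimension. Recall from Definition \ref{Iv1} that $\kappa^{J_a}(X)=-\infty$ if $P_m=0$ for all $m\geq 1$ and otherwise $\kappa^{J_a}(X)=\limsup_{m\to\infty}\frac{\log P_{L^{\otimes m}}}{\log m}$. The key elementary observation is: there exists $m\geq 1$ with $a\in\frac{4}{m}\pi\mathbb{Z}$ if and only if $a/\pi\in\mathbb{Q}$. Indeed if $a=\frac{4k}{m}\pi$ with $k,m\in\mathbb{Z}$, $m\geq 1$, then $a/\pi=\frac{4k}{m}\in\mathbb{Q}$; conversely if $a/\pi=p/q$ in lowest terms with $q\geq 1$, then for $m=q$ we have $a=\frac{p}{q}\pi=\frac{4}{q}\cdot\frac{p}{4}\pi$, and one just needs $m$ large enough that $\frac{ma}{4}\in\mathbb{Z}$, which holds e.g. for $m=4q$ since then $\frac{ma}{4}=\frac{ma}{4}=qa/\pi\cdot\pi/\pi$—more carefully, $\frac{ma}{4}=\frac{4q\cdot (p/q)\pi}{4}=p\pi$, wait, I should instead just say: take $m=4q$, then $a\in\frac{4}{m}\pi\mathbb{Z}=\frac{\pi}{q}\mathbb{Z}$, and $a=\frac{p}{q}\pi\in\frac{\pi}{q}\mathbb{Z}$. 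Hence when $a\notin\pi\mathbb{Q}$ we get $P_m(X,J_a)=0$ for every $m\geq 1$, so $\kappa^{J_a}(X)=-\infty$; and when $a\in\pi\mathbb{Q}\setminus\{0\}$ there is some $m_0\geq 1$ with $P_{m_0}(X,J_a)=1>0$, so the first case of the definition does not apply, while $P_m(X,J_a)\in\{0,1\}$ for all $m$ forces $\log P_{L^{\otimes m}}\leq 0$ and hence the $\limsup$ is $0$; thus $\kappa^{J_a}(X)=0$.

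I do not expect any serious obstacle here, since this proposition is purely a summary; the only place to be slightly careful is the number-theoretic equivalence (choosing the right $m$ to clear the denominator, which as above just means taking $m$ a multiple of $4q$) and making sure the boundary conventions in $\limsup\frac{\log P_m}{\log m}$ are handled — in particular that $P_m\leq 1$ everywhere forces the value $0$ rather than anything larger, and that we are in the ``otherwise'' branch precisely when $a\in\pi\mathbb{Q}\setminus\{0\}$. One should also note explicitly that $a=0$ is excluded by hypothesis, consistent with the excluded case $a\in\pi\mathbb{Q}$ where we ask $a\neq 0$; for all the nonzero rational-multiple cases the formula \eqref{KTpm} already guarantees infinitely many $m$ with $P_m=1$, so the statement is clean.
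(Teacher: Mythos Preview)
Your proposal is correct and follows essentially the same approach as the paper: the paper's proof is the single observation that $\bigcup_{m\in\mathbb{Z}_+}\frac{4}{m}\pi\mathbb{Z}=\pi\mathbb{Q}$, which is exactly the arithmetic equivalence you verify (more explicitly, and with some stumbling that should be cleaned up). Your added remark that infinitely many $m$ satisfy $P_m=1$ when $a\in\pi\mathbb{Q}\setminus\{0\}$, so that the $\limsup$ equals $0$ rather than merely being $\le 0$, is a detail the paper leaves implicit but is worth including.
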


\begin{proof}

As $\bigcup_{m\in\mathbb{Z}_+} \frac{4}{m}\pi\mathbb{Z}=\pi\mathbb{Q}$, we have that if $a\notin \pi\mathbb{Q}$, $P_m=0$ for all $m$; if $a\in \pi\mathbb{Q}$, then $P_m=1$ for some $m$.
\end{proof}

If we choose $a=4\pi, 2\pi, \frac{4}{3}\pi, \cdots, \frac{4}{n}\pi, \cdots$, then the first nonzero plurigenera are $P_1, P_2, P_3, \cdots, P_n,\cdots$. Therefore they are not birationally equivalent though with $\kappa^J=0$.

\begin{remk}
Let $J$ be the almost complex structure given by
$$J(\frac{\p}{\p t})=\frac{\p}{\p z}, \hspace{2mm} J(\frac{\p}{\p x})=\frac{\p}{\p y}+x\frac{\p}{\p z}.$$
Then $J$ is integrable and induces the usual complex structure on $X$. In this case, $\mathcal K$ is holomorphically trivial with a closed section $(dt+i(dz-xdy))\wedge(dy+idx)$. So $P_m(X,J)=1$ for any $m\geq 1$ and $\kappa^{J}(X)=0$.
\end{remk}

From \eqref{KTpm}, we see that both plurigenera and Kodaira dimension are not deformation invariant.  However, we still have upper semi-continuity.
Assume that $\Delta$ is an open set in $\mathbb{C}$ and $\{J(t), t\in \Delta\}$ is a family of almost complex structures on a compact smooth manifold, depending smoothly on $t$. Let $P_m(t), h^{p,0}(t)$ be the $m$-th plurigenus and $(p,0)$ Hodge number of $J(t)$. We have
\begin{prop} $P_m(t)$ and $h^{p,0}(t)$ are upper semi-continuous function of $t$.
\end{prop}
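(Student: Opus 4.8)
The plan is to realize $P_m(t)$ and $h^{p,0}(t)$ as dimensions of kernels of a smoothly varying family of elliptic operators and then invoke the standard upper semi-continuity of $t\mapsto \dim\ker$ for such families. Concretely, fix a smooth family of $J(t)$-compatible Riemannian metrics $g(t)$ depending smoothly on $t$ (e.g. obtained by averaging a fixed background metric, $g(t)(\cdot,\cdot)=\tfrac12(g_0(\cdot,\cdot)+g_0(J(t)\cdot,J(t)\cdot))$, which is smooth in $t$ since $J(t)$ is). For each $t$ this gives the Hermitian data $h(t)$, the $\ast$-operator $\ast_t$, and hence the operators $\bar\partial_{m,t}$ on $\mathcal K^{\otimes m}_{J(t)}$ and the natural pseudoholomorphic structure on $\Lambda^{p,0}_{J(t)}$, together with their formal adjoints and the Laplacians $\Delta_{\bar\partial_{m,t}}$ and $\Delta_{\bar\partial,t}^{(p,0)}$ from Section~3. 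By Theorem~\ref{finite} each such Laplacian is elliptic, and by Proposition~\ref{finsecE} (and the Corollary following it) we have $P_m(t)=\dim\mathcal H^{(0,0)}_{\bar\partial_{m,t}}(X,\mathcal K^{\otimes m})$ and $h^{p,0}(t)=\dim\mathcal H^{(p,0)}_{\bar\partial,t}(X,\mathcal O)$.

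The second step is to observe that the bundles themselves move with $t$: $(T^*X)^{1,0}_{J(t)}$ is the $+i$-eigenbundle of $J(t)$ acting on $T^*X\otimes\mathbb C$, which varies smoothly. To apply a fixed-bundle family-of-operators statement, I would trivialize this family: the projection $\pi^{1,0}_t\colon T^*X\otimes\mathbb C\to (T^*X)^{1,0}_{J(t)}$ restricted to a fixed reference $(T^*X)^{1,0}_{J(t_0)}$ is an isomorphism for $t$ near $t_0$ (it is the identity at $t_0$), so conjugating by it identifies all the $\Lambda^{p,0}_{J(t)}\otimes\mathcal K^{\otimes m}_{J(t)}$ with a single fixed complex vector bundle $W$, in such a way that $\Delta_{\bar\partial_{m,t}}$ becomes a smooth family $\{L_t\}$ of second-order differential operators on sections of $W$, each elliptic (ellipticity and the symbol computation are exactly as in the proof of Theorem~\ref{finite}, and the symbol depends continuously on $t$). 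The coefficients of $L_t$ depend smoothly on $t$ because $J(t)$, $g(t)$, the exterior derivative $d$, and the various algebraic projections all do.

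The third step is the soft functional-analytic input: for a continuous (in the appropriate operator-norm / coefficient sense) family of elliptic operators $L_t$ on a fixed closed manifold, the function $t\mapsto\dim\ker L_t$ is upper semi-continuous. This is standard — it follows from the existence of the Green operator/parametrix (which, as recorded in the proof of Theorem~\ref{finite}, gives $\Delta_{\bar\partial_E}\circ G+H=\mathrm{Id}$), since the finite-dimensional kernel is the image of the spectral projection onto a small ball around $0$, and for nearby $t$ the number of eigenvalues (with multiplicity) in that ball cannot exceed the number for $t_0$ by continuity of the resolvent; equivalently one uses that $\mathrm{coker}$ jumps up in the limit so $\mathrm{index}$ being constant forces $\dim\ker$ to be upper semi-continuous, or one simply cites the Kodaira–Spencer type lemma (see e.g. the references \cite{GH}, \cite{Huy} used earlier). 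Applying this to $L_t=\Delta_{\bar\partial_{m,t}}$ gives upper semi-continuity of $P_m(t)$, and to $L_t=\Delta_{\bar\partial,t}^{(p,0)}$ gives that of $h^{p,0}(t)$.

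The main obstacle, and the only point requiring genuine care, is the bookkeeping in the second step: making the identification of the moving bundles $\Lambda^{p,0}_{J(t)}\otimes\mathcal K^{\otimes m}_{J(t)}$ with a fixed bundle precise and checking that under this identification the operators and their formal adjoints — the latter defined via $\ast_t$, which also moves — form a genuinely smooth family with continuously varying (in fact continuous) principal symbol that stays elliptic. Everything else is either already established in the excerpt (finiteness, ellipticity, the Hodge/Green package) or is the textbook semicontinuity statement for elliptic families; no new analytic estimate is needed.
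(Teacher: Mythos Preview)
Your proposal is correct and follows essentially the same approach as the paper: identify $P_m(t)$ and $h^{p,0}(t)$ with dimensions of harmonic spaces for a smoothly varying family of elliptic Laplacians, then invoke the standard upper semi-continuity of $\dim\ker$ for such families. The paper's proof is a one-line citation to exactly this elliptic theory fact (Theorem~4.3 in \cite{KM}), whereas you have spelled out the bundle-trivialization and metric-choice bookkeeping; one minor caveat is that your parenthetical ``index constant $+$ coker jumps'' justification is not useful here since the Laplacians are self-adjoint (so $\ker=\mathrm{coker}$ and the index is identically zero), but your other justifications (resolvent continuity, or simply citing the Kodaira--Spencer lemma) are fine.
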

\begin{proof}
As all sections in $H^0(X,\mathcal K(t)^{\otimes m})$ and $H^0(X,\Omega^p(t))$ are exactly the harmonic sections, by the properties of elliptic operators (Theorem 4.3 in \cite{KM}, see also \cite{DLZ}), $P_m(t)$ and $h^{p,0}(t)$ are upper semi-continuous.
\end{proof}

\subsection{$4$-torus}\label{T4}
We offer another example on the four torus.
Consider the four torus $X=T^4=\mathbb{R}^4/ \mathbb{Z}^4$ with coordinates $(x_1, x_2, x_3, x_4)$. We study the almost complex structure $J$ introduced in \cite{CKT} given by
$$J=\begin{pmatrix} 0&-1&\a&\b\\1&0&-\b&\a\\0&0&0&1\\0&0&-1&0\end{pmatrix}.$$
We assume that $\a, \b$ are any two real smooth functions on $T^4$ satisfying $\f{\p^2 (\b+i\a)}{\p x_1^2}+\f{\p^2 (\b+i\a)}{\p x_2^2}\neq 0$ in a dense open set. For example, $\a=\cos 2\pi(x_1+x_2), \b=\sin 2\pi(x_1+x_2)$. Direct computation shows that $J$ is integrable if and only if $\a,\b$ are independent of $x_1,x_2$ (see \cite{CKT}). Therefore, $J$ is not integrable by our assumption. Let $$\phi_1=dx_1+i(dx_2-\a dx_3-\b dx_4),\ \ \ \phi_2=dx_3-idx_4.$$
Then $(T^*X)^{1,0}$ is spanned by $\phi_1, \phi_2$. Assume that $s=f\phi_1\wedge\phi_2$ is a smooth section of $\mathcal K$. Let $w=x_1+ix_2, \frac{\p}{\p w}=\frac{1}{2}(\frac{\p}{\p x_1}-i\frac{\p}{\p x_2})$, we have $\bar{\p}s=0$ if and only if
\begin{align}\label{t4}
\bar{\p}f+\frac{1}{2}\dfrac{\p(\b+i\a)}{\p w} f\bar{\phi}_2=0
\end{align}
It is equivalent to
\begin{align}\label{t41}
&\f{\p f}{\p \bar{w}}=0\\
\label{t42}\f{\p f}{\p x_3}+\a\f{\p f}{\p x_2}-i(&\f{\p f}{\p x_4}+\b\f{\p f}{\p x_2})+\dfrac{\p(\b+i\a)}{\p w} f=0
\end{align}
As $T^4$ is compact, from \eqref{t41} we get that $f$ is constant in the $(x_1,x_2)$ direction. Then \eqref{t42} become
\begin{align}\label{t42'} \f{\p f}{\p x_3}-i\f{\p f}{\p x_4}+\dfrac{\p(\b+i\a)}{\p w} f=0.\end{align}
Apply $\f{\p}{\p \bar{w}}$ to \eqref{t42'} to get
\begin{align}\dfrac{\p^2(\b+i\a)}{\p w\p\bar{w}} f=0.\end{align}
By the assumption of $\a, \b$, we have $f=0$ and $s=0$. Similarly, for $\mathcal K^{\otimes m}, m\geq 2$, if $s=f(\phi_1\wedge\phi_2)^{\otimes m}$ is holomorphic, then
$$\bar{\p}f+\frac{m}{2}\dfrac{\p(\b+i\a)}{\p w} f\bar{\phi}_2=0.$$
The same argument gives that $s=0$. Therefore, $P_m(X,J)=0, m\geq 1$ and $\kappa^J(X)=-\infty$.\\

For the irregularity, assume that $\gamma=g_1\phi_1+g_2\phi_2\in H^0(X,\Omega_X)$. Then from $\bar{\p} \gamma=0$ we get that \begin{align} \bar{\p}g_1+\frac{1}{2}\dfrac{\p(\b+i\a)}{\p w} g_1\bar{\phi}_2=0 \label{21}\\ \bar{\p}g_2+\frac{1}{2}\dfrac{\p(\b-i\a)}{\p \bar{w}} g_1\bar{\phi}_1=0. \label{22}\end{align}
(\ref{21}) is the same with (\ref{t4}), so we get that $g_1=0$. Putting it to (\ref{22}), we deduce that $g_2$ is a constant. So $\gamma=c\phi_2$ and $h^{1,0}=1$.

For any $t=t_1+it_2\in \mathbb{C}$, let $$J(t)=\begin{pmatrix} 0&-1&t_1\a&t_2\b\\1&0&-t_2\b&t_1\a\\0&0&0&1\\0&0&-1&0\end{pmatrix}.$$
$J(0)$ is the standard complex structure on $T^4$ and $J(1+i)=J$. By the above calculation, for any $m\geq 1$, $t\in \mathbb{C}$, we have $$P_m(t)=\begin{cases} 0, t\neq 0 \\ 1, t=0\end{cases}, \, \,\, h^{1,0}(t)=\begin{cases} 1, t\neq 0 \\ 2, t=0\end{cases}.$$

This gives an example where the plurigenera, the Kodaira dimension and the irregularity are not constant under smooth deformation even when $K=0$.

\subsection{Non-integrable almost complex manifolds with large Kodaira dimension}\label{bigex}
Although a generic almost complex structure does not have any pseudoholomorphic curve, which forces Kodaira dimension to be $-\infty$ or $0$, we still have interesting non-integrable examples with large Kodaira dimension.
In this subsection, we give examples of non-integrable almost complex structures on $2n$-manifolds with Kodaira dimension lying among $-\infty, 0, 1, \cdots, n-1$. First, we construct non-integrable almost complex 4-manifolds with $\kappa^J=1$.

Let $S$ be a compact Riemann surface with genus $g\geq 2$. We shall define a nonintegrable almost complex structure on $X=T^2\times S$.\\
Denote the two projections by
$$\pi_1: T^2\times S\lra T^2, \pi_2: T^2\times S\lra S.$$ Assume that $T^2=\mathbb{R}^2/\mathbb{Z}^2$ has coordinate $(x,y)$, then $\f{\p}{\p x}, \f{\p}{\p y}$ is a global frame on $T^2$. The tangent bundle of $X$ has a splitting $TX=TT^2\times TS$. Let $J_S$ be the complex structure on $S$ with local holomorphic coordinate $w$ and $h=h(w)$ be a smooth real nonconstant function on $S$. $h$ is pulled back by $\pi_2$ to be a function on $X$ (we still denote it by $h$ which is constant on $(x,y)$ direction). Define an almost complex structure on $X$ by

$$J(\f{\p}{\p x})=-h\f{\p}{\p x}+\f{\p}{\p y}, J(\f{\p}{\p y})=-(1+h^2)\f{\p}{\p x}+h\f{\p}{\p y}$$
$$J|_{TS}=J_S$$
Then $J^2=-id$ and $(TX)^{1,0}=<V, \f{\p}{\p w}>$, where $V=\f{\p}{\p x}+i(h\f{\p}{\p x}-\f{\p}{\p y})$. As $$[V,\f{\p}{\p w}]=-i\f{\p h}{\p w}\f{\p}{\p x}=-\f{i}{2}\f{\p h}{\p w}(V+\bar{V}),$$ $J$ is not integrable by Newlander-Nirenberg's theorem since $\f{\p h}{\p w}\neq 0$.\\
We have $J(dx)=-(hdx+(1+h^2)dy)$. Let $\a=dx+i(hdx+(1+h^2)dy)$. Then locally $$(T^*X)^{1,0}=<\a, dw> \  \text{and}\ \ \mathcal K_J^{\otimes m}=<(\a\wedge dw)^{\otimes m}>$$ for any $m\geq 1$. There is an embedding $$\pi_2^*: \Gamma(S, \mathcal K_S^{\otimes m})\lra \Gamma(X, \mathcal K_J^{\otimes m})$$ given by $\pi_2^*(\gamma)=(\a)^{\otimes m}\wedge \gamma$ for any $\gamma\in \Gamma(S, \mathcal K_S^{\otimes m})$.
Defining the $(0,1)$ form $\b=\f{-i(h+i)}{2(h-i)}\bar{\p} h$, we get

\begin{lem} \label{induce}
$\pi_2^*(\gamma)\in H^0(X, \mathcal K^{\otimes m})$ if and only if $\bar{\p} \gamma+m\b \wedge \gamma=0$.
\end{lem}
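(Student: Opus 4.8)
The plan is to compute $\bar{\p}_m(\pi_2^*\gamma)$ directly, using the Leibniz rule for $\bar{\p}_m$ on $\mathcal K_J^{\otimes m}=\langle(\a\wedge dw)^{\otimes m}\rangle$ together with the structure equation for $\a$. Writing $\gamma=g\,(dw)^{\otimes m}$ locally for a smooth function $g$ on $S$, we have $\pi_2^*\gamma=g\,(\a\wedge dw)^{\otimes m}$, so by the Leibniz rule
$$\bar{\p}_m(\pi_2^*\gamma)=\bar{\p}g\otimes(\a\wedge dw)^{\otimes m}+g\,\bar{\p}_m\big((\a\wedge dw)^{\otimes m}\big).$$
Thus everything reduces to computing $\bar{\p}_m$ of the local generator, which in turn (again by the product rule for $\bar{\p}_m$) reduces to computing $\bar{\p}(\a\wedge dw)=\bar{\p}\a\wedge dw-\a\wedge\bar{\p}(dw)$. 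Since $dw$ is closed, $\bar{\p}(dw)=0$, so the whole computation comes down to the single quantity $\bar{\p}\a$, or more precisely the component of $\bar{\p}\a$ that survives after wedging with $\a\wedge dw$ (only the $\bar{\phi}$-term not proportional to $\a$ or $dw$ contributes).

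The first key step, then, is to compute $d\a$ and extract its $(1,1)$-part. From $\a=dx+i(h\,dx+(1+h^2)dy)$ and $dh=\frac{\p h}{\p w}dw+\frac{\p h}{\p\bar w}d\bar w$ (as $h$ is real, $\frac{\p h}{\p\bar w}=\overline{\frac{\p h}{\p w}}$), one gets $d\a=i\,dh\wedge dx+2ih\,dh\wedge dy$, which should be re-expressed in the coframe $\{\a,\bar\a,dw,d\bar w\}$. Solving $dx$ and $dy$ in terms of $\a,\bar\a$ (using $\a-\bar\a=2i(h\,dx+(1+h^2)dy)$ and $\a+\bar\a=2dx$) lets us rewrite $dh\wedge dx$ and $dh\wedge dy$; after simplification the $(1,1)$-part should take the form $\bar{\p}\a=\b\wedge\a+(\text{terms involving }dw\text{ or }d\bar\a)$ for precisely the $\b=\frac{-i(h+i)}{2(h-i)}\bar{\p}h$ in the statement — here $\bar{\p}h=\frac{\p h}{\p\bar w}d\bar w$. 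The point is that modulo $\a$ and $dw$, one has $\bar{\p}(\a\wedge dw)\equiv(\b\wedge\a)\wedge dw\pmod{\a,dw}$ in the sense that contributes to $\bar{\p}_m$ of the generator, giving $\bar{\p}_m\big((\a\wedge dw)^{\otimes m}\big)=m\,\b\wedge(\a\wedge dw)^{\otimes m}$, up to terms that vanish when multiplied out against a section of $\mathcal K^{\otimes m}$.

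Once this is in hand, assembling gives
$$\bar{\p}_m(\pi_2^*\gamma)=\big(\bar{\p}g+m\,g\,\b\big)\wedge(\a\wedge dw)^{\otimes m}=\bar{\p}\gamma+m\,\b\wedge\gamma$$
under the identification $\pi_2^*$, where on the right $\bar{\p}\gamma$ denotes the $\bar{\p}$-operator on $\mathcal K_S^{\otimes m}$ (legitimate because $J_S$ is integrable on $S$, so this is the usual Dolbeault operator). Hence $\pi_2^*\gamma\in H^0(X,\mathcal K^{\otimes m})$ exactly when $\bar{\p}\gamma+m\b\wedge\gamma=0$. One should check that the identity is coordinate-independent: $\b$ is globally defined as a $(0,1)$-form on $S$ (it is $\frac{-i(h+i)}{2(h-i)}\bar\p h$ with $h$ a global function and $h-i$ nowhere zero since $h$ is real), and $\bar\p\gamma+m\b\wedge\gamma$ transforms as a section of $(T^*S)^{0,1}\otimes\mathcal K_S^{\otimes m}$, so the local computation patches. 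The main obstacle I anticipate is purely computational bookkeeping in the second step — correctly inverting the coframe change and tracking which wedge terms die against $(\a\wedge dw)^{\otimes m}$ — rather than any conceptual difficulty; the role of the hypothesis $\frac{\p h}{\p w}\neq 0$ (non-integrability) plays no part here and only matters elsewhere.
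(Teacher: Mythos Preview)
Your approach is correct and essentially identical to the paper's: write $\gamma=f(dw)^{\otimes m}$ locally, apply the Leibniz rule for $\bar\partial_m$, and reduce to computing $\bar\partial(\alpha\wedge dw)$. The paper computes this directly as $(d\alpha\wedge dw)^{2,1}$ and obtains $b\,d\bar w\wedge\alpha\wedge dw$ with $b=\tfrac{-i(h+i)}{2(h-i)}\tfrac{\partial h}{\partial\bar w}$, while you propose to compute $\bar\partial\alpha$ first and then wedge with $dw$; these are the same computation in a slightly different order, and the remaining bookkeeping (that $\bar\partial g$ has no $\bar\alpha$-component since $g$ is pulled back from $S$, and that the resulting $\beta$ is purely a $d\bar w$-term so the equation descends to $S$) is exactly what the paper uses.
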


\begin{proof}
Assume locally that $\gamma=f(w)dw^{\otimes m}$. Then $\pi_2^*(\gamma)=(\a)^{\otimes m}\wedge \gamma=f(\a\wedge dw)^{\otimes m}$. We have
\begin{align*} \bar{\p}_J (\a\wedge dw)&=(d\a\wedge dw)^{2,1}\\ &=\f{i}{2}\f{\p h}{\p \bar{w}}(1-\f{2h(i+h)}{1+h^2})d\bar{w}\wedge \a\wedge dw\\ &=\f{-i(h+i)}{2(h-i)}\f{\p h}{\p \bar{w}}d\bar{w}\wedge \a\wedge dw.\end{align*}
 Let $b=\f{-i(h+i)}{2(h-i)}\f{\p h}{\p \bar{w}}$ and $\b=\f{-i(h+i)}{2(h-i)}\bar{\p} h$. As $f$ depends only on $w$, we have $$\bar{\p}_J(\pi_2^*(\gamma))= (\f{\p f}{\p \bar{w}}+mb f)d\bar{w}(\a\wedge dw)^{\otimes m}.$$ So $\bar{\p}_J(\pi_2^*(\gamma))=0$ is equivalent to $\f{\p f}{\p \bar{w}}+mb f=0$ which gives $\bar{\p} \gamma+m\b \wedge \gamma=0$.
\end{proof}

 Denote $$H_h^0(S, \mathcal K_S^{\otimes m})=\{\gamma\in \Gamma(S, \mathcal K_S^{\otimes m}), \bar{\p} \gamma+m\b \wedge \gamma=0\}.$$ When $h=0$, then $\b=0$ and the group is the ordinary holomorphic pluricanonical section group of $\mathcal K_S^{\otimes m}$.  From Lemma \ref{induce}, we get an injective map $$\pi_2^*:H_h^0(S, \mathcal K_S^{\otimes m})\lra H^0(X, \mathcal K^{\otimes m}).$$ We can compute the dimension of $H_h^0(S, \mathcal K_S^{\otimes m})$ explicitly when $m>1$. Notice that the operator $\bar{\p}_h=\bar{\p}+m\b\wedge\_$ satisfies the Leibniz rule and then gives a deformed holomorphic structure of $\mathcal K_S^{\otimes}$ as $\dim S=1$. For the holomorphic line bundle $(\mathcal K_S^{\otimes m}, \bar{\p}_h)$, $$\deg(\mathcal K_S^{\otimes m}, \bar{\p}_h)=\deg(\mathcal K_S^{\otimes m}, \bar{\p})=2m(g-1)$$ by the deformation invariance of $c_1$. Also,
 $$H^0(S, \mathcal K_S\otimes (m\mathcal K_S, \bar{\p}_h)^*)=0$$ for $m>1$ since the degree is negative.
 Applying the Riemann-Roch formula to $(\mathcal K_S^{\otimes m}, \bar{\p}_h)$, we have
 \begin{align} \dim H_h^0(S, \mathcal K_S^{\otimes m})&=\dim H_h^0(S, \mathcal K_S^{\otimes m})-\dim H^0(S, \mathcal K_S\otimes (\mathcal K_S^{\otimes m}, \bar{\p}_h)^*) \notag \\
 &=\deg(\mathcal K_S^{\otimes m}, \bar{\p}_h)-g+1 \notag \\
 &=(2m-1)(g-1) \label{RR} \end{align}
for $m>1$. When $m=1$, as  $\deg(\mathcal K_S\otimes (\mathcal K_S, \bar{\p}_h)^*)=0$, we have $\dim H^0(S, \mathcal K_S\otimes (\mathcal K_S, \bar{\p}_h)^*)\leq 1$. Applying the Riemann-Roch formula we obtain $$g-1\leq \dim H_h^0(S, \mathcal K_S)\leq g.$$

 Next, we show that $\pi_2^*$ is surjective.

\begin{lem} \label{pull}
For any $s\in H^0(X, \mathcal K^{\otimes m})$, $s=\pi_2^*(\gamma)$ for some $\gamma\in H_h^0(S, \mathcal K_S^{\otimes m})$.
\end{lem}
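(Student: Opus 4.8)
The plan is to show that any $s\in H^0(X,\mathcal K^{\otimes m})$ is constant along the $T^2$-fibers, i.e. that it descends to a section over $S$, and that this descended object automatically lies in $H_h^0(S,\mathcal K_S^{\otimes m})$ by Lemma \ref{induce}. Writing $s=f\,(\a\wedge dw)^{\otimes m}$ locally, the equation $\bar\p_m s=0$ unwinds — exactly as in the computation preceding Lemma \ref{induce} — into a pair of scalar equations: one expressing that the derivative of $f$ in the $V$-direction coming from the $T^2$-factor vanishes (since $V=\f{\p}{\p x}+i(h\f{\p}{\p x}-\f{\p}{\p y})$ involves only the $\f{\p}{\p x},\f{\p}{\p y}$ vector fields), and one in the $\f{\p}{\p\bar w}$-direction along $S$ containing the zeroth-order term $mb$. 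The first equation is the key one: it says $Vf=0$ where $V$ is a non-vanishing vector field tangent to the $T^2$-fibers.

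First I would make the change of variables on each $T^2$-fiber so that the first equation becomes a $\bar\p$-equation: since on a fixed fiber $\{w=\text{const}\}$ the coefficient $h=h(w)$ is constant, $V=\f{\p}{\p x}+i(h\f{\p}{\p x}-\f{\p}{\p y})=(1+ih)\f{\p}{\p x}-i\f{\p}{\p y}$ is a constant-coefficient complex vector field on $T^2$, hence (up to a nonzero constant factor depending on $w$) it is the Cauchy–Riemann operator for a flat complex structure on $T^2=\mathbb R^2/\mathbb Z^2$, i.e. for some lattice $\Lambda_w\subset\mathbb C$. A holomorphic function on a compact Riemann surface — here the flat torus $\C/\Lambda_w$ — is constant. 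Therefore $f$ is constant along each $T^2$-fiber, so $f=f(w)$ and $s=\pi_2^*(\gamma)$ where $\gamma=f(w)\,dw^{\otimes m}$ is a well-defined smooth section of $\mathcal K_S^{\otimes m}$ over $S$ (one checks the transition functions match since $\a^{\otimes m}$ carries the fiber-direction part of $\mathcal K^{\otimes m}$). Then the remaining equation $\f{\p f}{\p\bar w}+mbf=0$ is precisely the condition $\bar\p\gamma+m\b\wedge\gamma=0$, so $\gamma\in H_h^0(S,\mathcal K_S^{\otimes m})$, and by Lemma \ref{induce}, $s=\pi_2^*(\gamma)$ as desired.

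The main obstacle is the step asserting that $f$ is fiberwise constant. One must be careful that $V$ genuinely has no component in the $S$-directions — this is where the product structure $TX=TT^2\oplus TS$ and the fact that $J|_{TS}=J_S$, $h$ is pulled back from $S$, all matter — so that restricting $s$ to a single fiber $T^2\times\{q\}$ produces a genuine holomorphic section of the restricted line bundle, which is (a power of) the canonical bundle of a flat torus, hence trivial with only constant global sections. There is also a mild bookkeeping point: after establishing $f=f(w)$ locally on each coordinate patch of $S$, one must verify these local functions patch into a global section $\gamma$ of $\mathcal K_S^{\otimes m}$; this follows because the $\a$-factor in $\mathcal K_J^{\otimes m}=\langle(\a\wedge dw)^{\otimes m}\rangle$ is globally defined on $X$ and constant in the fiber direction, so the transition data for $s$ in the $dw^{\otimes m}$-part is exactly that of $\mathcal K_S^{\otimes m}$ pulled back by $\pi_2$. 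Combined with the injectivity from Lemma \ref{induce}, this gives the bijection $\pi_2^*\colon H_h^0(S,\mathcal K_S^{\otimes m})\xrightarrow{\ \sim\ }H^0(X,\mathcal K^{\otimes m})$, and hence, via \eqref{RR}, the value of $P_m(X,J)$ and of $\kappa^J(X)$.
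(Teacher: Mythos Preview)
Your argument is correct and coincides with the paper's first (direct-calculation) proof. One small correction: since $V\in T^{1,0}X$, the fiber-direction equation coming from $\bar\partial_m s=0$ is $\bar V(f)=0$, not $V(f)=0$; this does not affect your reasoning, as $\bar V$ is equally a constant-coefficient Cauchy--Riemann operator on each torus fiber, so $f$ is still fiberwise constant. The paper also supplies a second, more geometric proof via pseudoholomorphic intersection theory---the zero locus of $s$ is a $J$-holomorphic $1$-subvariety in the class $m(2g-2)[T^2]$, which has zero intersection with the fiber class, forcing it (by positivity of intersection) to be a union of fibers---which avoids local computation and is what generalizes to the tamed elliptic-fibration setting of the subsequent theorem.
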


\begin{proof} We offer two different proofs. The first follows from direct calculation. The second proof applies the results of intersection theory built in \cite{Z2} which can be generalized to other cases (Theorem \ref{ellk=1} below).

Assume that $s=g(\a\wedge dw)^m$ locally. As $\bar{\p}\a=\beta\wedge\a$, we have $\bar{\p}_m s=(\bar{\p}g+mg\beta)(\a\wedge dw)^m$. So $\bar{\p}g+mg\beta=0$, which is equivalent to
\begin{align} \bar{V}(g)=0  \label{24} \\ (\f{\p g}{\p \bar{w}}+mb g)d\bar{w}=0 \end{align}
Using the same technique in example 6.1, from (\ref{24}) we get that $g$ is independent of $x, y$. So we can define $\gamma=g(dw)^{\otimes m}\in H_h^0(S, \mathcal K_S^{\otimes m})$, and $s=\pi_2^*(\gamma)$.\vspace{.2cm}

The second approach is more topological. Define a deformation $$J_t(\f{\p}{\p x})=-(th)\f{\p}{\p x}+\f{\p}{\p y}, J_t(\f{\p}{\p y})=-(1+t^2h^2)\f{\p}{\p x}+(th)\f{\p}{\p y}$$ $$J_t|_{TS}=J_S, 0\leq t\leq 1.$$ Then $J_1=J$ and $J_0$ is the product complex structure on $X$. By the homotopy invariance of the Chern classes, we have $c_1(\mathcal K_J)=c_1(\mathcal K_{J_0})=(2g-2)[T^2]$, where $[T^2]$ is the cohomology class of the fiber of $\pi_2$. Also, each fiber $T^2$ is a $J$-holomorphic curve by definition.

Let $z_0=(t_0,w_0)$ be any point in $X$ where $t_0\in T^2, w_0\in S$ and $s\in H^0(X, \mathcal K^{\otimes m})$ a nontrivial section. First assume that  $s(z_0)=0$. By Corollary \ref{holo}, $s$ induces a holomorphic map. Therefore, $s^{-1}(0)$ supports a pseudoholomorphic 1-subvariety in $X$ (Corollary 1.3 in \cite{Z2}). By the positive intersection of pseudoholomorphic curves (see \cite{Z2}), either $T^2\times \{w_0\}\subset s^{-1}(0)$ or $T^2\times \{w_0\}$ has positive intersection with $s^{-1}(0)$. As $$[s^{-1}(0)]=m\cdot c_1(\mathcal K_J)=m(2g-2)[T^2]$$ and $[T^2]\cdot [T^2]=0$, the latter case cannot be possible. So $s|_{T^2\times \{w_0\}}=0$.

Next, assume that $s(z_0)\neq 0$.
Denote $H^1_h(S, \mathcal K_S^{\otimes m}-\{w_0\})$ the first sheaf cohomology group of the tensor bundle $(\mathcal K_S^{\otimes m}, \bar{\p}_h)\otimes (-\{w_0\})$. By the Kodaira vanishing theorem, when $m>1$, $$H^1_h(S, \mathcal K_S^{\otimes m}-\{w_0\})=0$$ as $\deg(\mathcal K_S^{\otimes (m-1)}, \bar{\p}_h)\geq 2$. From the exact sequence $$0\lra  \mathcal K_S^{\otimes m}-\{w_0\}\lra  \mathcal K_S^{\otimes m}\lra  {\mathcal K_S^{\otimes m}}|_{w_0}\lra 0,$$ we get the exact sequence of cohomology groups (\cite{GH}) : $$0\lra H_h^0(S, \mathcal K_S^{\otimes m}-\{w_0\})\lra H^0_h(S, \mathcal K_S^{\otimes m})\lra {\mathcal K_S^{\otimes m}}|_{w_0}\lra H^1_h(S, \mathcal K_S^{\otimes m}-\{w_0\})=0.$$ Therefore,
there is a $\tilde{\gamma}\in H_h^0(S, \mathcal K_S^{\otimes m})$ such that $\tilde{\gamma}(w_0)\neq 0$ when $m>1$. Then $\pi_2^*(\tilde{\gamma})(z_0)\neq 0$. Since $s(z_0)\neq 0$, there is some $k\neq 0$ such that $(s-k\pi_2^*(\tilde{\gamma}))(z_0)=0.$ As $s-k\pi_2^*(\tilde{\gamma})\in H^0(X, \mathcal K^{\otimes m}),$ by the same argument as in the first case, $$(s-k\pi_2^*(\tilde{\gamma}))|_{T^2\times \{w_0\}}=0.$$ So $s=k\pi_2^*(\tilde{\gamma})$ on $T^2\times \{w_0\}$.

Therefore, in either case, $s$ is constant on the fiber of $\pi_2$. Then we can push down $s$ through $\pi_2$ to get a section $\gamma\in \Gamma(S, \mathcal K_S^{\otimes m})$ such that $s=\pi_2^*(\gamma)$. By Lemma \ref{induce}, $\gamma\in H_h^0(S, \mathcal K_S^{\otimes m})$.
\end{proof}

Combining Lemma \ref{induce}, Lemma \ref{pull} and (\ref{RR}), we have

\begin{prop} \label{4}
$\pi_2^*:H_h^0(S, \mathcal K_S^{\otimes m})\lra H^0(X,\mathcal K^{\otimes m})$ is an isomorphism. Therefore, $P_m(X,J)=\dim H_h^0(S, \mathcal K_S^{\otimes m})=(2m-1)(g-1)$ for $m>1$, $g-1\leq P_1(X,J)\leq g$ and $\kappa^J(X)=1$.
\end{prop}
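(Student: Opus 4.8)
The plan is to assemble the three already-established ingredients into the stated isomorphism and then read off the dimension counts. First I would invoke Lemma \ref{induce} to see that $\pi_2^*$ maps $H_h^0(S, \mathcal K_S^{\otimes m})$ into $H^0(X,\mathcal K^{\otimes m})$, and that this map is injective: if $\pi_2^*(\gamma)=(\alpha)^{\otimes m}\wedge\gamma$ vanishes identically then $\gamma$ vanishes identically, since $\alpha$ is a nonvanishing local $(1,0)$-form and $\gamma$ is a section pulled back from $S$. Next, Lemma \ref{pull} gives surjectivity of $\pi_2^*$ onto $H^0(X,\mathcal K^{\otimes m})$: every holomorphic pluricanonical section on $X$ is constant along the fibers $T^2\times\{w\}$ of $\pi_2$, hence descends to a section $\gamma$ of $\mathcal K_S^{\otimes m}$ over $S$, which by Lemma \ref{induce} lies in $H_h^0(S,\mathcal K_S^{\otimes m})$. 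Together these show $\pi_2^*$ is an isomorphism, so $P_m(X,J)=\dim H_h^0(S,\mathcal K_S^{\otimes m})$ for every $m\ge 1$.

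Then I would substitute the dimension formula \eqref{RR}, namely $\dim H_h^0(S,\mathcal K_S^{\otimes m})=(2m-1)(g-1)$ for $m>1$, to conclude $P_m(X,J)=(2m-1)(g-1)$ for $m>1$, and the earlier Riemann--Roch estimate $g-1\le \dim H_h^0(S,\mathcal K_S)\le g$ to get $g-1\le P_1(X,J)\le g$. Finally, the Kodaira dimension follows from Definition \ref{Iv1}: since $P_m(X,J)=(2m-1)(g-1)$ grows linearly in $m$ (and $g\ge 2$ makes it positive for $m>1$), we have $\limsup_{m\to\infty}\frac{\log P_{m}}{\log m}=\limsup_{m\to\infty}\frac{\log\big((2m-1)(g-1)\big)}{\log m}=1$, so $\kappa^J(X)=1$.

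There is essentially no new mathematical content left at this point — the proposition is a bookkeeping consolidation of Lemmas \ref{induce} and \ref{pull} and equation \eqref{RR}. The only point that needs a word of care is the injectivity of $\pi_2^*$ and the descent step in surjectivity, i.e. checking that "constant along the fiber" genuinely produces a well-defined global section $\gamma$ on $S$ (the local expressions $s=g(\alpha\wedge dw)^{\otimes m}$ must patch, which they do because $g$ is independent of the $T^2$-coordinates and the $(\alpha\wedge dw)^{\otimes m}$ trivializations transform by the same cocycle as $(dw)^{\otimes m}$ on $S$). The main obstacle, such as it is, was already overcome in Lemma \ref{pull}, where the positivity of intersections of pseudoholomorphic curves together with $[T^2]\cdot[T^2]=0$ forces every section to vanish on an entire fiber once it vanishes at one point of it; given that lemma, the present statement is immediate.
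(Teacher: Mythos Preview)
Your proposal is correct and matches the paper's approach exactly: the paper states the proposition with the one-line justification ``Combining Lemma \ref{induce}, Lemma \ref{pull} and (\ref{RR})'', and your write-up simply unpacks that sentence, supplying the routine checks (injectivity from $\alpha$ being nowhere vanishing, surjectivity from Lemma \ref{pull}, and the growth computation for $\kappa^J$). There is nothing to add or correct.
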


To compute the irregularity of $X$, assume that $\tau \in H^0(X, \Omega_X)$. Locally write $\tau=g_1\a+g_2dw$. From $\bar{\p}\tau=0$ we get \begin{align} \bar{\p}g_1+g_1\beta=0 \label{26} \\ \bar{\p}g_2=0. \label{27}\end{align} From (\ref{26}) we get that \begin{align} \bar{V}(g_1)=0 \label{027}\\ \f{\p g_1}{\p\bar{w}}+bg_1=0.\label{28} \end{align} Then (\ref{027}) gives that $g_1$ is independent of $x,y$ as before. (\ref{28}) can be interpreted as follows. The $\bar{\p}_h=\bar{\p}+\beta\wedge\_$ also induces a deformed complex structure on the trivial line bundle. Define $H_h^0(S, \mathcal O)=\{g\in C^{\infty}(S),\bar{\p}_h g=0\}.$ Then (\ref{28}) is equivalent to $g_1\in H_h^0(S, \mathcal O)$. As $\deg \mathcal O=0$, we have $\dim H_h^0(S, \mathcal O)\leq 1$. From (\ref{27}) we get that $$\bar{V}g_2=0, \f{\p g_2}{\p\bar{w}}=0.$$ which implies that $g_2$ is constant. Therefore $\tau=g_1\a+cdw$, with $g_1\in H_h^0(S, \mathcal O)$. As $h^{1,0}(S)=g$, we obtain $$g\leq h^{1,0}(X)\leq g+1,$$
The case $h^{1,0}(X)= g+1$ corresponds to $\dim H_h^0(S, \mathcal O)=1$ which implies that $(\mathcal O, \bar{\p}_h)$ is holomorphic trivial. The case $h^{1,0}(X)= g$ corresponds to $\dim H_h^0(S, \mathcal O)=0$.\\

We can generalize the calculation to the case where $X$ admits a smooth pseudoholomorphic elliptic fibration.

\begin{thm}\label{ellk=1}
If $(X^4, J)$ admits a smooth pseudoholomorphic elliptic fibration over a Riemann surface of genus greater than $1$ with $J$ tamed, then $\kappa^J=1$.
\end{thm}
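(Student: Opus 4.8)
The strategy is to reduce the general smooth elliptic fibration case to the model computation just carried out for $X = T^2\times S$, at least at the level of dimension counts. Let $\pi: X^4\to S$ be the given smooth pseudoholomorphic elliptic fibration, with $S$ a Riemann surface of genus $g\ge 2$ and $J$ tamed by a symplectic form $\omega$. First I would record the topological input: since every fiber $F_s=\pi^{-1}(s)$ is a smooth $J$-holomorphic torus and distinct fibers are disjoint, we have $[F]\cdot[F]=0$, and by homotopy/deformation invariance of Chern classes $c_1(\mathcal K_X)$ restricted to a fiber is trivial; more precisely, arguing as in the second proof of Lemma \ref{pull} with the deformation $J_t$ connecting $J$ to a product-type structure along the fibers, one gets $c_1(\mathcal K_X)=(2g-2)[F] + (\text{terms pulled back from }S)$, and in fact for the purpose of Kodaira dimension what matters is that $c_1(\mathcal K_X)$ pairs non-negatively with $[F]$ and is a non-negative multiple of a pullback class. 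The key point: for a nontrivial $s\in H^0(X,\mathcal K_X^{\otimes m})$, its zero locus is a $J$-holomorphic $1$-subvariety in class $m\,c_1(\mathcal K_X)$ by Corollary 1.3 of \cite{Z2}, and by positivity of intersections with the fiber class $[F]$ (which has $[F]\cdot[F]=0$), the zero divisor cannot meet a generic fiber in a positive finite number of points — hence $s$ must vanish identically on every fiber it meets, so $s$ restricted to each fiber is either identically zero or nowhere zero.

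The next step is descent: because $s|_F$ is a pseudoholomorphic (hence holomorphic, by Koszul--Malgrange on the elliptic curve $F$) section of $\mathcal K_X^{\otimes m}|_F$, and $\mathcal K_X|_F\cong \mathcal K_F$ is trivial (an elliptic curve has trivial canonical bundle) together with the normal-bundle contribution which is also trivial since $[F]\cdot[F]=0$, the section $s|_F$ is a holomorphic section of a trivial bundle on a compact curve, hence constant. This shows $s$ is constant along fibers in a suitable trivialization, so $s$ descends to a section $\gamma$ of a line bundle on $S$ obtained by pushing forward, exactly as in Lemma \ref{pull}. The pushforward line bundle is $(\mathcal K_S^{\otimes m}, \bar\partial_h)$ for an appropriate deformed $\bar\partial$-operator $\bar\partial_h = \bar\partial + (\text{lower order})$ encoding how the almost complex structure twists near the fibers — its degree is $2m(g-1)$ by deformation invariance of $c_1$, just as in the model case. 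Conversely, any such $\gamma$ pulls back to an element of $H^0(X,\mathcal K_X^{\otimes m})$, by the analogue of Lemma \ref{induce}. Thus $H^0(X,\mathcal K_X^{\otimes m})\cong H^0_h(S,\mathcal K_S^{\otimes m})$, and Riemann--Roch on $S$ gives $\dim = (2m-1)(g-1)$ for $m>1$, which grows linearly in $m$, so $\kappa^J(X)=1$.

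The main obstacle is establishing the two structural facts that were essentially automatic in the product example: (i) that $c_1(\mathcal K_X)$ is (a non-negative rational multiple of) a class pulled back from $S$, and (ii) the trivialization of $\mathcal K_X^{\otimes m}$ along fibers that makes "constant along the fiber" a well-defined, coordinate-independent notion and identifies the pushforward with a genuine deformed-holomorphic line bundle on $S$. For (i) the tameness hypothesis is what I expect to use: tamed almost complex structures on a $4$-manifold with a fibration by $J$-holomorphic tori force the fibers to be homologically essential and the fibration to behave like a Lefschetz-type fibration (here smooth, so genuinely a torus bundle), and one can run the $J_t$-deformation argument fiberwise while keeping tameness, so the Chern class computation transports from the product model. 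For (ii) the point is that the vertical tangent bundle $T^{vert}X$ along a fiber is holomorphically trivial (trivial normal bundle plus elliptic curve), so $\mathcal K_X|_F$ is trivial; patching these trivializations over $S$ and tracking the $\bar\partial$-operator gives the deformed bundle $(\mathcal K_S^{\otimes m},\bar\partial_h)$, after which the Riemann--Roch computation and the Kodaira-vanishing argument for surjectivity (for $m>1$, using $\deg(\mathcal K_S^{\otimes(m-1)},\bar\partial_h)\ge 2$) go through verbatim as in Proposition \ref{4}.
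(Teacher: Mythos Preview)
Your overall architecture matches the paper's: establish $\mathcal K_X^{\otimes m}\cong \pi^*(\mathcal K_S^{\otimes m})$ as complex line bundles, show via positivity of intersection that every pseudoholomorphic section is constant along fibers, descend to a deformed holomorphic structure $(\mathcal K_S^{\otimes m},\bar\partial_\pi)$ on $S$, and finish with Riemann--Roch. The zero-locus argument you give (Corollary~1.3 of \cite{Z2} plus $[F]^2=0$) is exactly what the paper uses for the descent step.

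The genuine gap is your treatment of obstacle (i), the identification $c_1(\mathcal K_X)=(2g-2)[F]$. Your proposed fix---deform $J$ fiberwise to a product-type structure and transport the Chern class---does not work: a general smooth torus bundle over $S$ is not diffeomorphic to $T^2\times S$, so there is no product structure to deform to, and in any case Chern classes are already topological, so tameness would play no role in such an argument. More seriously, knowing only $c_1(\mathcal K_X)\cdot[F]=0$ does \emph{not} force $c_1(\mathcal K_X)$ to be a multiple of $[F]$; in $H^2(X)$ there can be other classes orthogonal to the fiber. You need an additional geometric input to rule these out.

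The paper's resolution is precisely where tameness enters, and it is different from what you propose: by Taubes \cite{T96}, a tamed $J$ guarantees that the canonical class $K$ is represented by an actual $J$-holomorphic $1$-subvariety $\Theta$. Adjunction on the torus fiber gives $K\cdot T=0$, and then positivity of intersection forces every irreducible component of $\Theta$ into some fiber; since fibers are smooth (hence irreducible), $\Theta$ is a sum of fibers and $K=bT$. Fiber integration then pins down $b=2g-2$, yielding $\mathcal K_X\cong\pi^*\mathcal K_S$ as complex bundles. The paper remarks explicitly that this existence of a $J$-holomorphic representative of $K$ is the \emph{only} place tameness is used. Once you replace your deformation argument with this Taubes-based step, the rest of your outline goes through essentially as written.
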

\begin{proof}
Let $\pi: X\rightarrow S$ be the pseudoholomorphic elliptic fibration. By \cite{T96}, the canonical class $K$ is represented by $J$-holomorphic $1$-subvariety $\Theta$. For the fiber class $T$, we have $T\cdot T=0$. Hence $K\cdot T=0$ by adjunction formula. By positivity of intersection, any component of $\Theta$ is contained in a fiber. Since each fiber is smooth, we have $K=bT$. On the other hand, any section of $\mathcal K_X$ pushed down to a section of $\mathcal K_S$ by integrate out the fiber. Hence $K=(2g-2)T$.

In other words, as complex bundles, $\pi^*(\mathcal K_S^{\otimes m})=\mathcal K_X^{\otimes m}$. We  notice that $\bar{\p}_m$ maps $\pi^*\Gamma(S, \mathcal K_S^{\otimes m})$ to $\pi^*\Gamma(S, \mathcal K_S^{\otimes m}\otimes T^*S)$. We thus denote $\bar{\p}_m\pi^*(f\gamma)=\pi^*(\bar{\p}_{\pi}(\gamma))$ where $\bar{\p}_{\pi}$ is an operator mapping $\Gamma(S, \mathcal K_S^{\otimes m})$ to $\Gamma(S, \mathcal K_S^{\otimes m}\otimes T^*S)$. Hence, for any smooth function $f$ on $S$, by the Leibniz rule of $\bar{\p}_m$, we have $$\pi^*(\bar{\p}_{\pi}(f\gamma))=\bar{\p}_m\pi^*(f\gamma)=\bar{\p}\pi^*f\wedge\pi^*\gamma+\pi^*f\cdot \bar{\p}_m\pi^*\gamma=\pi^*(\bar{\p}_{\pi}f\wedge\gamma+f\bar{\p}_{\pi}\gamma).$$
That is to say $\bar{\p}_{\pi}$ also satisfies the Leibniz rule and hence it is a pseudoholomorphic structure on $\mathcal K_S^{\otimes m}$. Since $S$ is a Riemann surface, it defines a holomorphic structure on it. To summarize, $\pi^*(\gamma)\in H^0(X, \mathcal K^{\otimes m})$ if and only if $\gamma\in H^0_{\pi}(S, \mathcal K_S^{\otimes m})$, where $H^0_{\pi}(S, \mathcal K_S^{\otimes m})=\{\gamma\in \Gamma(S, \mathcal K_S^{\otimes m}), \bar{\p}_{\pi} \gamma=0\}.$

Since any section $s\in H^0(X, \mathcal K^{\otimes m})$ would have zero locus a $J$-holomorphic $1$-subvariety in class $mK=m(2g-2)T$, Lemma \ref{pull} (or the argument in the first paragraph) still applies and we know any section $s\in H^0(X, \mathcal K^{\otimes m})$ is of the form $\pi^*(\gamma)$ for some $\gamma\in H^0_{\pi}(S, \mathcal K_S^{\otimes m})$.

Therefore, $P_m(X,J)=\dim H_{\pi}^0(S, \mathcal K_S^{\otimes m})=(2m-1)(g-1)$ for $m>1$ and $\kappa^J(X)=1$.
\end{proof}

We remark that the only place we use tameness is that it guarantees the existence of pseudoholomorphic $1$-subvariety in the (pluri)canonical class.

In fact, the examples in Section \ref{kt} (as well as Section \ref{T4}) are smooth pseudoholomorphic elliptic fibrations over $T^2$. In these cases, $(\mathcal K^{\otimes m}, \bar{\p}_{\pi})$ are holomorphic line bundle of degree $0$. These bundles are holomorphically trivial if and only if $P_m=1$.

With those $4$-manifolds with $\kappa^J=1$, we can construct more nonintegrable almost complex manifolds with large Kodaira dimensions. First, we derive the K\"unneth formula for pluricanonical sections of almost complex manifolds. For two almost complex manifolds $(X_1, J_1)$ and $(X_2, J_2)$, the product map $J_1\times J_2$ induces an almost complex structure on $X_1\times X_2$. We have
 \begin{prop}\label{Kun}
 $P_m(X_1\times X_2, J_1\times J_2)=P_m(X_1, J_1)P_m(X_2, J_2)$ for $m\geq 1$.
 \end{prop}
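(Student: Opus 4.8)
The plan is to establish the two inequalities $P_m(X_1\times X_2)\le P_m(X_1)P_m(X_2)$ and $P_m(X_1\times X_2)\ge P_m(X_1)P_m(X_2)$ separately, both by relating pseudoholomorphic pluricanonical sections on the product to pairs of such sections on the factors. The key algebraic observation is that the canonical bundle of the product splits: writing $\pi_i\colon X_1\times X_2\to X_i$ for the projections (each of which is pseudoholomorphic for $J_1\times J_2$), one has a natural isomorphism of complex line bundles $\mathcal K_{X_1\times X_2}\cong \pi_1^*\mathcal K_{X_1}\otimes \pi_2^*\mathcal K_{X_2}$, coming from the $J_1\times J_2$-splitting $\Lambda^{n,0}(X_1\times X_2)=\bigoplus_{p}\Lambda^{p,0}(X_1)\boxtimes\Lambda^{n-p,0}(X_2)$ restricted to the top piece $\Lambda^{n_1,0}(X_1)\boxtimes\Lambda^{n_2,0}(X_2)$. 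Tensoring gives $\mathcal K_{X_1\times X_2}^{\otimes m}\cong \pi_1^*\mathcal K_{X_1}^{\otimes m}\otimes\pi_2^*\mathcal K_{X_2}^{\otimes m}$. The first step is to check that under this identification the operator $\bar\partial_m$ on the product is the ``tensor product'' of the operators $\bar\partial_m$ on the two factors, i.e. on a decomposable section $\pi_1^*s_1\otimes\pi_2^*s_2$ one has $\bar\partial_m(\pi_1^*s_1\otimes\pi_2^*s_2)=\pi_1^*(\bar\partial_m s_1)\otimes\pi_2^*s_2+\pi_1^*s_1\otimes\pi_2^*(\bar\partial_m s_2)$, using that $(T^*(X_1\times X_2))^{0,1}=\pi_1^*(T^*X_1)^{0,1}\oplus\pi_2^*(T^*X_2)^{0,1}$ and the Leibniz rule; this is the bundle analogue of the identity $\bar\partial_{X_1\times X_2}=\bar\partial_{X_1}\otimes 1+1\otimes\bar\partial_{X_2}$ on forms and it follows quickly from the definition of $\bar\partial_m$ in Section 2. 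Consequently $\pi_1^*s_1\otimes\pi_2^*s_2\in H^0(X_1\times X_2,\mathcal K^{\otimes m})$ whenever $s_i\in H^0(X_i,\mathcal K_{X_i}^{\otimes m})$, which gives a bilinear map $H^0(X_1,\mathcal K_{X_1}^{\otimes m})\otimes H^0(X_2,\mathcal K_{X_2}^{\otimes m})\to H^0(X_1\times X_2,\mathcal K^{\otimes m})$, hence the inequality $\ge$.

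For the reverse inequality I would take an arbitrary $s\in H^0(X_1\times X_2,\mathcal K^{\otimes m})$ and show it lies in the image of the above map, i.e. that the ``product'' sections span. The clean way is a fiberwise/Fourier-type argument: fix a point $x_2\in X_2$ and restrict $s$ to the $J_1$-holomorphic submanifold $X_1\times\{x_2\}$; since the inclusion $X_1\hookrightarrow X_1\times\{x_2\}$ is pseudoholomorphic and $\mathcal K^{\otimes m}|_{X_1\times\{x_2\}}\cong \mathcal K_{X_1}^{\otimes m}$ (the normal/fiber contribution $\mathcal K_{X_2}^{\otimes m}|_{x_2}$ being a constant line), $s|_{X_1\times\{x_2\}}$ is a pseudoholomorphic pluricanonical section on $X_1$. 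Choosing a basis $t_1,\dots,t_N$ of $H^0(X_1,\mathcal K_{X_1}^{\otimes m})$ (finite by Theorem \ref{finite}) we may write, for each $x_2$, $s|_{X_1\times\{x_2\}}=\sum_j c_j(x_2)\,t_j$ for uniquely determined coefficients $c_j(x_2)$; uniqueness and the dependence of $s$ on $x_2$ force $c_j$ to be smooth in $x_2$, and by running the symmetric argument on the $X_2$-fibers (or by applying $\bar\partial_m$ in the $X_2$-directions and using the tensor decomposition of $\bar\partial_m$ above together with linear independence of the $t_j$) one gets $\bar\partial_m c_j=0$, i.e. $c_j\in H^0(X_2,\mathcal K_{X_2}^{\otimes m})$. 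Then $s=\sum_j \pi_1^*t_j\otimes\pi_2^*c_j$, exhibiting $s$ in the span of product sections; combined with the fact that the map is injective (clear since the $t_j$ restricted to generic fibers are independent, or by the same coefficient-extraction argument), one obtains the dimension formula. Alternatively, one can phrase this purely via an elliptic/Hodge argument: $\Delta_{\bar\partial_m}$ on the product is, under the Künneth-compatible choice of product metric, related to the factor Laplacians, so $\mathcal H^{(0,0)}_{\bar\partial_m}(X_1\times X_2)$ contains $\mathcal H^{(0,0)}_{\bar\partial_m}(X_1)\otimes\mathcal H^{(0,0)}_{\bar\partial_m}(X_2)$ and one argues equality.

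The main obstacle I anticipate is the ``pushing down'' step, i.e. proving that an arbitrary $s$ really does decompose as a finite sum of products and that the coefficient functions $c_j(x_2)$ are not merely smooth but annihilated by $\bar\partial_m$ on $X_2$. The subtlety is that $\bar\partial_m s=0$ is a single equation on the product involving both sets of antiholomorphic directions simultaneously, so one must carefully separate variables; the tensor-product formula for $\bar\partial_m$ from the first step is exactly what makes this work, since it lets one project $\bar\partial_m s=0$ onto its $\pi_1^*(T^*X_1)^{0,1}$- and $\pi_2^*(T^*X_2)^{0,1}$-components. The $\pi_1^*$-component gives $\bar\partial_m(s|_{X_1\times\{x_2\}})=0$ for each $x_2$ (hence the expansion in the $t_j$ with smooth coefficients), and feeding this expansion into the $\pi_2^*$-component and using linear independence of the $t_j$ over $C^\infty(X_2)$ yields $\bar\partial_m c_j=0$. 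Once separation of variables is justified the rest is routine bookkeeping with the finiteness theorem.
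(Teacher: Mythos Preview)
Your argument is correct, and the key ``separation of variables'' step you flag as the main obstacle does go through: once you have the tensor splitting $\bar\partial_m=\bar\partial_{m,X_1}\otimes 1+1\otimes\bar\partial_{m,X_2}$, restricting $s$ to each slice $X_1\times\{x_2\}$ lands in the finite-dimensional space $H^0(X_1,\mathcal K_{X_1}^{\otimes m})\otimes(\mathcal K_{X_2}^{\otimes m})_{x_2}$, the coefficients $c_j$ are smooth (e.g.\ by $L^2$-pairing against an orthonormal basis $\{t_j\}$), and linear independence of the $t_j$ pointwise over $X_2$ forces $\bar\partial_m c_j=0$ from the $\pi_2^*(T^*X_2)^{0,1}$-component.

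Your primary route, however, is genuinely different from the paper's. The paper argues entirely via harmonic theory: with the product Hermitian metric one has $\Delta_{J_1\times J_2}=\Delta_{J_1}+\Delta_{J_2}$ on $\Gamma(X_1\times X_2,\mathcal K^{\otimes m})$, so if $\{\varphi_i\}$, $\{\psi_j\}$ are eigenform Hilbert bases for $L^2(X_i,\mathcal K_{X_i}^{\otimes m})$ with eigenvalues $\lambda_i,\mu_j\ge 0$, then $\{p_1^*\varphi_i\wedge p_2^*\psi_j\}$ is a Hilbert basis of eigenforms with eigenvalues $\lambda_i+\mu_j$, and the kernel is exactly the span of those with $\lambda_i=\mu_j=0$. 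Your approach is more elementary in that it uses only the finite-dimensionality of $H^0$ (Theorem~\ref{finite}) and the Leibniz rule, never invoking the spectral decomposition or density of decomposable forms in $L^2$; it is also tailored to the $q=0$ case, where $H^0=\ker\bar\partial_m$ is already a genuine cohomology group. The paper's spectral argument, by contrast, is the one that would generalize verbatim to the harmonic spaces $\mathcal H^{(p,q)}$ for $q>0$ (where your fiberwise-restriction trick no longer directly applies), though of course in the non-integrable setting those spaces need not be metric-independent. You do mention the harmonic alternative at the end, so you are aware of both lines; either is acceptable here.
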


\begin{proof}

We apply the harmonic theory in section 3 to derive the formula, similar to the argument in the integrable case (see \cite{GH}). Let $$p_1: X_1\times X_2\lra X_1, p_2: X_1\times X_2\lra X_2$$ be the two projections. We have $\mathcal K_{X_1\times X_2}=p_1^*(\mathcal K_{X_1})\wedge p_2^*(\mathcal K_{X_2})$. Choose Hermitian metrics $g_1$ and $g_2$ on $X_1, X_2$ respectively. Then $g_1\times g_2$ gives a Hermitian metric on $X_1\times X_2$. A form $\phi\in \Gamma(X_1\times X_2, \mathcal K_{X_1\times X_2})$ is called decomposable if $\phi=p_1^*(\phi_1)\wedge p_2^*(\phi_2)$. Similar arguments as those in Page 104 in \cite{GH} show that the decomposable smooth forms are dense in the Hilbert space $L^2(X_1\times X_2, \mathcal K_{X_1\times X_2})$.

Denote $\Delta_{J_1}, \Delta_{J_2}$ the Laplacian operators associated to $\bar{\p}_{J_1}, \bar{\p}_{J_2}$ as given in (\ref{lap}). By the definition, they are both semi-positive operators. Let $\varphi_1, \varphi_2, \cdots,$ be the eigenforms of $\Delta_{J_1}$ in $\Gamma(X_1, \mathcal K_{X_1})$ with eigenvalues $\lambda_1, \lambda_2, \cdots$ and $\psi_1, \psi_2, \cdots,$ be the eigenforms of $\Delta_{J_2}$ in $\Gamma(X_2, \mathcal K_{X_2})$ with eigenvalues $\mu_1, \mu_2, \cdots$. Then $\lambda_i\geq 0, \mu_i\geq 0$ for any $i$. Let $\Delta_{J_1\times J_2}$ be the Laplacian operator associated to $J_1\times J_2$ and $g_1\times g_2$. From the definition, we directly get $\Delta_{J_1\times J_2}=\Delta_{J_1}+\Delta_{J_2}.$ Also, $$\Delta_{J_1\times J_2}(p_1^*(\varphi_i)\wedge p_2^*(\psi_j))=(\lambda_i+\mu_j)p_1^*(\varphi_i)\wedge p_2^*(\psi_j).$$ So we have $\Delta_{J_1\times J_2}(p_1^*(\varphi_i)\wedge p_2^*(\psi_j))=0$ if and only if $\lambda_i=\mu_j=0$. As $\{\varphi_i\}$, $\{\psi_i\}$ give Hilbert bases for $L^2(X_1, \mathcal K_{X_1})$ and $L^2(X_2, \mathcal K_{X_2})$ respectively, $\{p_1^*(\varphi_i)\wedge p_2^*(\psi_j)\}$ gives a Hilbert basis of $L^2(X_1\times X_2, \mathcal K_{X_1\times X_2})$ by the denseness of decomposable forms. Therefore, we get $Ker(\Delta_{J_1\times J_2})=<p_1^*(\varphi_i)\wedge p_2^*(\psi_j)>$ with $\lambda_i=\mu_j=0$. Namely,
$$H^0(X_1\times X_2,\mathcal K_{X_1\times X_2})=H^0(X_1, \mathcal K_{X_1})\otimes H^0(X_2, \mathcal K_{X_2}).$$
This shows that $P_1(X_1\times X_2, J_1\times J_2)=P_1(X_1, J_1)P_1(X_2, J_2)$. Similar argument gives that $$H^0(X_1\times X_2,\mathcal K_{X_1\times X_2}^{\otimes m})=H^0(X_1, \mathcal K_{X_1}^{\otimes m})\otimes H^0(X_2, \mathcal K_{X_2}^{\otimes m})$$ and $P_m(X_1\times X_2, J_1\times J_2)=P_m(X_1, J_1)P_m(X_2, J_2)$ for any $m>1$.
\end{proof}

From the definition of Kodaira dimension we have

\begin{cor}
$\kappa^{J_1\times J_2}(X_1\times X_2)=\kappa^{J_1}(X_1)+\kappa^{J_2}(X_2)$ for any two compact almost complex manifolds $(X_1, J_1), (X_2, J_2)$.
\end{cor}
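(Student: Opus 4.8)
The plan is to reduce everything to the multiplicativity of plurigenera in Proposition \ref{Kun} and then read off the Kodaira dimension from the $\limsup$ that defines it, being careful about the arithmetic progression along which the plurigenera are nonzero. Write $\kappa_i=\kappa^{J_i}(X_i)$ and $P_m^{(i)}=P_m(X_i,J_i)$, so that $P_m(X_1\times X_2)=P_m^{(1)}P_m^{(2)}$ for every $m\ge 1$. First I would dispose of the degenerate case: if $\kappa_i=-\infty$ for one $i$, then $P_m^{(i)}=0$ for all $m$, hence $P_m(X_1\times X_2)=0$ for all $m$ and $\kappa^{J_1\times J_2}(X_1\times X_2)=-\infty$, which equals $\kappa_1+\kappa_2$ under the usual convention $-\infty+c=-\infty$. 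So from now on $\kappa_1,\kappa_2\ge 0$.

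The next step is an arithmetic observation about $E_i:=\{m\ge 1:P_m^{(i)}>0\}$. It is a sub-semigroup of $(\mathbb{Z}_{>0},+)$: if $s\in H^0(X_i,\mathcal K^{\otimes a})$ and $t\in H^0(X_i,\mathcal K^{\otimes b})$ are nonzero, then $s\otimes t\in H^0(X_i,\mathcal K^{\otimes(a+b)})$ by the Leibniz rule for $\bar\partial_{a+b}$, and $s\otimes t$ is not identically zero because the zero set of a nontrivial pseudoholomorphic section is nowhere dense (locally it is the zero set of a holomorphic function along the $J$-holomorphic disks, as in the discussion preceding Theorem \ref{hartogs}, or by unique continuation for $\Delta_{\bar\partial_m}$). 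By elementary numerical-semigroup theory, $E_i$ coincides with the full progression $d_i\mathbb{Z}$ for $d_i=\gcd E_i$ outside a finite range; moreover, multiplication by a fixed nonzero section gives injections $H^0(\mathcal K^{\otimes m})\hookrightarrow H^0(\mathcal K^{\otimes(m+m')})$ for $m'\in E_i$, so $k\mapsto P_{kd_i}^{(i)}$ is nondecreasing. Consequently $E_1\cap E_2$ agrees with $L\mathbb{Z}$, $L=\mathrm{lcm}(d_1,d_2)$, beyond a finite range, and $P_m(X_1\times X_2)=0$ for all large $m\notin L\mathbb{Z}$.

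The heart of the matter is a quantitative refinement of the definition of $\kappa$: along multiples of $d_i$ the relevant limit exists, i.e.
\[
\lim_{k\to\infty}\frac{\log P_{kd_i}^{(i)}}{\log(kd_i)}=\kappa_i,\qquad\text{equivalently}\qquad c_i\,k^{\kappa_i}\le P_{kd_i}^{(i)}\le C_i\,k^{\kappa_i}\ \ (k\gg 0).
\]
This is the standard polynomial growth of plurigenera from Iitaka theory; I expect this to be the main obstacle, since it is genuinely more than the $\limsup$ definition gives. With the plain $\limsup$ alone one only obtains the easy inequality $\kappa^{J_1\times J_2}(X_1\times X_2)\le\kappa_1+\kappa_2$, by subadditivity of $\limsup$ together with $E_1\cap E_2\subseteq E_i$. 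Establishing the matching lower bound forces one to produce a single modulus $m$ along which \emph{both} $\log P_m^{(i)}\sim\kappa_i\log m$ simultaneously; combining the two $\limsup$-subsequences by multiplicativity does not suffice, because $P$ is multiplicative while $\log m$ is additive, so one only recovers weighted averages of $\kappa_1$ and $\kappa_2$. In the almost complex setting I would justify the displayed growth via the pluricanonical maps $\phi_{kd_i}$ and the dimension of their generic fibres (the argument to be carried out through the pluricanonical map in Paper II), reducing to the classical case fibrewise.

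Granting this, the conclusion is immediate. For all large $k$ one has $\log P_{kL}(X_1\times X_2)=\log P_{kL}^{(1)}+\log P_{kL}^{(2)}$; dividing by $\log(kL)$ and letting $k\to\infty$, and using that $L\mathbb{Z}\subseteq d_i\mathbb{Z}$ so each term tends to $\kappa_i$, gives $\lim_{k\to\infty}\log P_{kL}(X_1\times X_2)/\log(kL)=\kappa_1+\kappa_2$. Since $P_m(X_1\times X_2)=0$ for all large $m\notin L\mathbb{Z}$ and the above limit is the same along every subprogression of $L\mathbb{Z}$, the $\limsup$ over all $m$ equals $\kappa_1+\kappa_2$, which is precisely $\kappa^{J_1\times J_2}(X_1\times X_2)$.
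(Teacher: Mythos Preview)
The paper offers essentially no proof: it asserts the corollary follows ``from the definition of Kodaira dimension'' together with Proposition~\ref{Kun} and moves on. Your analysis is far more careful, and your identification of the crux is correct. With the $\limsup$ definition alone, one gets only $\kappa^{J_1\times J_2}(X_1\times X_2)\le\kappa_1+\kappa_2$ by subadditivity of $\limsup$ together with $E_1\cap E_2\subseteq E_i$; the reverse inequality genuinely requires the Iitaka-type two-sided polynomial bound $c_i\,k^{\kappa_i}\le P^{(i)}_{kd_i}\le C_i\,k^{\kappa_i}$, since (as you note) the two $\limsup$-achieving subsequences for $X_1$ and $X_2$ need not be aligned, and monotonicity by itself does not force alignment. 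Your supporting steps---that each $E_i$ is a numerical semigroup because the tensor product of nonzero pseudoholomorphic sections is nonzero, that $k\mapsto P^{(i)}_{kd_i}$ is eventually nondecreasing, and that the product $\limsup$ is carried along $L\mathbb Z$---are all correct.

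So rather than a gap in your reasoning, you have surfaced one in the paper's. The polynomial growth (equivalently, the equivalence of the $\limsup$ definition with the pluricanonical-map or section-ring characterization) is precisely what the authors defer to Paper~II, so your plan is consistent with theirs. For the actual use of the corollary in this paper (Theorem~\ref{niKod}), the plurigenera of every factor are computed explicitly as polynomials in $m$, so the Kodaira dimension of each product can in fact be read off directly from the K\"unneth formula without invoking the general statement; the issue is therefore cosmetic within Paper~I.
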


\begin{thm}\label{niKod}
There are examples of compact $2n$-dimensional nonintegrable almost complex manifolds with Kodaira dimension lying among $\{-\infty, 0, 1, \cdots, n-1\}$ for $n\geq 2$.
\end{thm}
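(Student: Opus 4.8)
The plan is to realize each target value of the Kodaira dimension on a product of manifolds whose factors have already been understood in this section, and then invoke the Künneth formula (Proposition \ref{Kun}) together with its corollary on additivity of $\kappa^J$ to assemble the result. The building blocks I would use are: the non-integrable $4$-manifold $X_1 = T^2 \times S$ with $\kappa^J = 1$ constructed just above (Proposition \ref{4}); the Kodaira--Thurston example from Section \ref{kt}, which carries non-integrable almost complex structures with $\kappa^{J_a} = 0$ (Proposition \ref{6.1}, taking $a \in \pi\mathbb{Q}\setminus\{0\}$); the $4$-torus from Section \ref{T4}, which carries a non-integrable almost complex structure with $\kappa^J = -\infty$; and, as an integrable factor available for padding dimensions, a complex $2$-torus $T^4$ (or $T^2$) with $\kappa = 0$.

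The construction then goes as follows. Fix $n \ge 2$ and a target $k \in \{-\infty, 0, 1, \dots, n-1\}$. First suppose $k \ge 0$. Write $n = k + r$ with $r \ge 1$ (since $k \le n-1$). Take $X = X_1^{\times k} \times T^{2r}$, where $X_1 = T^2\times S$ is the $4$-manifold with $\kappa^{J} = 1$ and $T^{2r}$ is a flat complex torus of complex dimension $r$; the product almost complex structure is non-integrable because at least one factor $X_1$ is non-integrable and non-integrability is inherited by products (the Nijenhuis tensor of a product is the sum of the pullbacks of the factor Nijenhuis tensors). By the corollary to Proposition \ref{Kun},
\[
\kappa^{J}(X) = k\cdot\kappa^{J}(X_1) + r\cdot\kappa(T^{2r}) = k\cdot 1 + r\cdot 0 = k,
\]
and $\dim_{\mathbb{C}} X = 2k + r$. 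If $k = 0$, replace the (empty) product of $X_1$'s by a single copy of the Kodaira--Thurston example with $\kappa^{J_a} = 0$ to keep a non-integrable factor, and pad with a torus of the appropriate dimension; here one must take a little care that $2k + r = n$ can indeed be arranged, which for $k \ge 1$ forces $r = n - 2k$ and hence requires $k \le n/2$. For $n/2 < k \le n-1$ the padding-by-$T^{2r}$ trick does not immediately fit, so instead I would build a factor of odd complex dimension with $\kappa = 1$: take $X_1 \times T^{2}$ has complex dimension $3$ and $\kappa = 1$, or more generally realize an arbitrary $(\kappa, \dim_{\mathbb{C}})$ pair $(j, d)$ with $1 \le j \le d-1$ by products of $X_1$ with tori, then combine such pieces; one verifies by a short arithmetic argument that every pair $(k,n)$ with $0 \le k \le n-1$ is obtained.

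For the remaining case $k = -\infty$, take $X = T^4_{J} \times T^{2(n-2)}$ where $T^4_J$ is the non-integrable $4$-torus of Section \ref{T4} with $\kappa^J = -\infty$ and $P_m = 0$ for all $m \ge 1$. By Proposition \ref{Kun}, $P_m(X) = P_m(T^4_J)\cdot P_m(T^{2(n-2)}) = 0$ for all $m \ge 1$, so $\kappa^J(X) = -\infty$, the product is non-integrable, and $\dim_{\mathbb{C}} X = n$. The main obstacle is not any single hard step but the bookkeeping: one must check that for every $n \ge 2$ and every $k$ in the stated range there is an actual choice of non-integrable factors of complex dimensions summing to $n$ whose Kodaira dimensions sum to $k$ — in particular handling $k$ close to $n-1$, where one needs odd-dimensional $\kappa=1$ building blocks rather than only the $4$-dimensional one. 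I would organize this as a small lemma: products of copies of $X_1 = T^2\times S$ with flat tori realize every pair $(\kappa, \dim_{\mathbb{C}}) = (k, n)$ with $1 \le k$, $\dim_{\mathbb{C}} = n$, $k \le n - 1$, and $n - k \ge 1$ free torus dimensions can always be inserted to correct the total dimension provided $n \ge k+1$; the edge cases $k = 0$ and $k = -\infty$ are handled by the Kodaira--Thurston and $4$-torus examples respectively. The necessary inputs — non-integrability of products, additivity of $\kappa^J$, and the explicit values $\kappa = 1, 0, -\infty$ on the three model families — have all been established earlier in this section, so the proof is essentially a combinatorial assembly.
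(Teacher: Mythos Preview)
Your overall strategy---assemble products and invoke the additivity of $\kappa^J$ from the corollary to Proposition \ref{Kun}---matches the paper, and your treatment of $k=-\infty$ and $k=0$ is fine (the paper uses the Kodaira--Thurston surface for both, taking $a\notin\pi\mathbb Q$ for $-\infty$ and $a\in\pi\mathbb Q\setminus\{0\}$ for $0$, then padding with copies of $T^2$; your $4$-torus variant for $-\infty$ works equally well).

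The genuine gap is in the range $1\le k\le n-1$ when $k>n/2$. Your only positive-$\kappa$ block is $X_1=T^2\times S$, contributing $(\kappa,\dim_{\mathbb C})=(1,2)$, while tori contribute $(0,r)$. Any product of these has $\kappa=(\hbox{number of $X_1$ factors})\le\frac12\dim_{\mathbb C}$, so you can never exceed $k=\lfloor n/2\rfloor$. Your proposed fix---using $X_1\times T^2$ as a $(1,3)$ block, or ``more generally realize an arbitrary $(j,d)$ with $1\le j\le d-1$ by products of $X_1$ with tori''---does not help: $X_1\times T^2$ is already a product of your existing blocks, and the general claim is precisely what fails. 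For example $(k,n)=(2,3)$ is unreachable with your ingredients, so the ``small lemma'' you propose is false.

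The paper closes this gap with one further building block you omitted: a Riemann surface $\Sigma$ of genus $g>1$, which is integrable but has $(\kappa,\dim_{\mathbb C})=(1,1)$. Then $X_1\times\Sigma^{k-1}\times(T^2)^{\,n-k-1}$ has complex dimension $2+(k-1)+(n-k-1)=n$, Kodaira dimension $1+(k-1)+0=k$, and is non-integrable because $X_1$ is. This covers every $1\le k\le n-1$ without any further bookkeeping.
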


\begin{proof} By taking direct products of the Kodaira-Thurston surface with copies of two torus $T^2$, we can get compact $2n$-manifolds with nonintegrable almost complex structure and $\kappa^J=-\infty$ or $0$.

By taking direct products of the 4-manifold $X=T^2\times S$ as in Proposition \ref{4} with copies of  $2$-torus $T^2$ or the Riemann surface $\Sigma$ with $g>1$, we get compact $2n$-manifolds with nonintegrable almost complex structures and $\kappa^J=1, 2, \cdots, n-1$.
\end{proof}

\section{The six sphere}

By a result of Borel and Serre \cite{BS}, the only spheres which admit almost complex structures are $S^2$ and $S^6$. The standard way to construct an almost complex structure on $S^6$ is to use the cross product of $\mathbb R^7$ applying to the tangent space of $S^6$.  In this section, we will compute the Hodge numbers, the plurigenera and Kodaira dimension of the standard almost complex structure. Our method is to consider $S^6$ as a homogeneous space of the exceptional Lie group $G_2$ and apply an explicit real representation of the Lie algebra $\mathfrak g_2$.

First, we review some definitions following \cite{Br3}.
Let $e_1, e_2, \cdots,e_7$ be the standard basis of $\mathbb R^7$ and $e^1, e^2,\cdots, e^7$ be the dual basis. Denote $e^{ijk}$ the wedge product $e^i\wedge e^j\wedge e^k$ and define $$\Phi=e^{123}+e^{145}+e^{167}+e^{246}-e^{257}-e^{347}-e^{356}$$
Then $\Phi$ induces a unique bilinear mapping, the cross product: $\times : \mathbb R^7\times \mathbb R^7\lra \mathbb R^7$ by $(u\times v)\cdot w=\Phi(u,v,w)$, where $\cdot$ is the Euclidean metric on $\mathbb R^7$. It follows that $u\times v=-v\times u$ and \begin{align}\label{c1} (u\times v)\cdot u=0.\end{align} Also, further discussion (see \cite{Br3}) shows that \begin{align}\label{c2} u\times (u\times v)=(u\cdot v)u-(u\cdot u)v.\end{align} We remark that the cross product $\times$ differs from the cross product induced by Cayley's table of Octonion multiplication, though they are isomorphic. For example, here $e_1\times e_6=e_7$.

 The six sphere $S^6=\{u\in \mathbb R^7, u\cdot u=||u||=1\}.$ The tangent space at $u\in S^6$ is $T_uS^6=\{v\in \mathbb R^7| u\cdot v=0\}$. Let $J_u=u\times \_$ be the cross product operator of $u$. Then by (\ref{c1}),(\ref{c2}), $J_u(T_uS^6)\subset T_uS^6$ and $J_u^2=-id$ on $T_uS^6$. In particular, when $u=e_1$, we have
\begin{align}\notag &J_{e_1}(e_2)=e_3, \quad J_{e_1}(e_3)=-e_2, \quad J_{e_1}(e_4)=e_5,\\ &J_{e_1}(e_5)=-e_4, \quad J_{e_1}(e_6)=e_7, \quad J_{e_1}(e_7)=-e_6.\label{Je}\end{align} Let $\mathsf J=\{J_u, u\in S^6\}$. Then $\mathsf J$ gives an almost complex structure on $S^6$ which is the standard almost complex structure we consider. It is shown \cite{EF}\cite{EL} that $\mathsf J$ is not integrable since the Nijenhuis tensor of $\mathsf J$ is nowhere vanishing.

On the other side, denote \begin{align} \label{G2} G_2=\{g\in GL(7, \mathbb R)| g^*(\Phi)=\Phi\},\end{align} where $G_2$ is the simple Lie group of type $G_2$ which is compact, connected and simply connected with real dimension 14 (see \cite{Br}). $G_2$ preserves the inner product $\cdot$ and the cross product $\times$ and acts transitively on $S^6$. Let $G_2\times S^6\lra S^6$ be the transitive action and $p:G_2\lra S^6$ the induced map given by $p(g)=g(e_1)$. The map $p$ is a submersion with $p^{-1}(e_1)=\{g\in G_2| g(e_1)=e_1\}\cong SU(3)$. This makes $G_2$ into a principle right $SU(3)$ bundle over $S^6$.

Next, we give the explicit representation of the Lie algebra $\mathfrak g_2$ of $G_2$ and define a left invariant almost complex structure on it. Let $\epsilon_{ijk}$ be skew-symmetric unit indices such that $\Phi=\frac{1}{6}\epsilon_{ijk}e^{ijk}$. For example, $\epsilon_{123}=-\epsilon_{132}=\epsilon_{231}=1$. By the characterization in \cite{Br2} (section 2.5 there), a skew-symmetric matric $A=(a_{jk})$ is in $\mathfrak g_2$ if and only if $\sum_{j,k=1}^7\epsilon_{ijk}a_{jk}=0$ for all $1\leq i\leq 7$.
Let $\vec{x}=(x_1,x_2,x_3,x_4,x_5,x_6)\in \mathbb R^6$, $\vec{y}=(y_1,y_2,\cdots,y_8)\in \mathbb R^8$. Direct calculation gives that a general element $A$ in $\mathfrak g_2\subset gl(7,\mathbb R)$ has the following form
\begin{align}
A=\{\vec{x},\vec{y}\}:=
\begin{pmatrix} 0&x_1&-x_2&x_3&-x_4&x_5&-x_6\\
  -x_1&0&y_1&-x_6+y_4&x_5+y_3&x_4-y_6&-x_3-y_5\\
  x_2&-y_1&0&-y_3&y_4&y_5&-y_6\\
  -x_3&x_6-y_4&y_3&0&-y_1+y_2&-x_2-y_8&x_1-y_7\\
  x_4&-x_5-y_3&-y_4&y_1-y_2&0&y_7&-y_8\\
  -x_5&-x_4+y_6&-y_5&x_2+y_8&-y_7&0&-y_2\\
  x_6&x_3+y_5&y_6&-x_1+y_7&y_8&y_2&0\
\end{pmatrix}\notag
\end{align}
Here $\{\cdot,\cdot\}$ denotes an operation $\{\cdot,\cdot\}: \mathbb R^6\times \mathbb R^8\longrightarrow \mathfrak g_2$ whose definition is stated above. The above expression is chosen so that it suits our later discussion on $S^6$.

Denote $\vec{\alpha}_i, i=1,\cdots,6,$ the $i$-th unit vector in $\mathbb R^6$ and $\vec{\beta}_j, j=1,\cdots,8,$ the $j$-th unit vector in $\mathbb R^8$. Define
$f_i=\{\vec{\alpha}_i,\vec{0}\}, h_j=\{\vec{0},\vec{\beta}_j\}$. For example, $f_1=\{(1,0,0,0,0,0),\vec{0}\}, h_2=\{\vec{0},(0,1,0,0,0,0,0,0)\}$. Then $\{f_i, h_j;1\leq i\leq 6,1\leq j\leq 8\}$ forms a basis of $\mathfrak g_2$. The Lie brackets between $f_i$ and $h_j$ are computed in the appendix. Let $$\mathfrak m=span\{f_1,\cdots,f_6\},\hspace{.8cm} \mathfrak h=span\{h_1,\cdots,h_8\}.$$
Then $\mathfrak g_2=\mathfrak m\oplus \mathfrak h,\ [\mathfrak h,\mathfrak h]\subset \mathfrak h$ and $\mathfrak h\cong su(3).$ A Cartan subalgebra of $\mathfrak g_2$ is given by the span of $h_1, h_2$. The corresponding decomposition of $\mathfrak g_2$ into root spaces can be also calculated. For the projection $p:G_2\lra S^6$, we have $$\ker \ dp=\mathfrak h,\ \ \ \ dp(f_i)=(-1)^ie_{i+1}.$$
Define an almost complex structure $\tilde{J}$ on $\mathfrak g_2$ by \begin{align*}
&\tilde{J}(f_1)=-f_2, \hspace{.4cm} \tilde{J}(f_3)=-f_4, \hspace{.4cm}\tilde{J}(f_5)=-f_6,\\
 &\tilde{J}(h_1)=-h_2,\ \  \tilde{J}(h_3)=-h_4, \ \ \tilde{J}(h_5)=-h_6,\ \ \ \tilde{J}(h_7)=-h_8.\end{align*}
$\tilde{J}$ induces a left invariant almost complex structure on $G_2$ which is still denoted by $\tilde{J}$. By (\ref{Je}), the following holds at $1_{G_2}$, \begin{align} \label{pseudo} dp\circ \tilde{J}=\mathsf{J}\circ dp.\end{align} Since both $\tilde{J}$ and $\mathsf{J}$ are $G_2$-invariant, (\ref{pseudo}) holds globally on $G_2$. In other words, $p$ is a $(\tilde{J},\mathsf{J})$-pseudoholomorphic map.

With the construction of $\tilde{J}$, we prove the following

\begin{thm} \label{sphere}
For the standard almost complex structure $\mathsf J$ on $S^6$, $h^{1,0}=h^{2,0}=h^{2,3}=h^{1,3}=0$, $P_m(S^6, \mathsf J)=1$ for any $m\geq 1$ and $\kappa^{\mathsf J}=0$.
\end{thm}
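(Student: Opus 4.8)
The plan is to work in the homogeneous model $S^6=G_2/SU(3)$, using the pseudoholomorphic submersion $p\colon G_2\to S^6$ and the representation theory of the isotropy group $SU(3)$.

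I would first produce an invariant holomorphic pluricanonical section. The isotropy module $T^{1,0}_{e_1}S^6$ is a faithful $3$-dimensional complex representation of $SU(3)$, hence has no nonzero invariant vector, while $\Lambda^3$ of it is the trivial determinant representation. Therefore $\mathcal K=\Lambda^{3,0}$ is $G_2$-equivariantly trivial and admits a nowhere-vanishing $G_2$-invariant section $\Omega_0$. Then $\bar\partial\Omega_0$ is a $G_2$-invariant section of $\Lambda^{3,1}\cong (T^*S^6)^{0,1}\otimes\mathcal K$, whose fiber at $e_1$ is again a faithful $3$-dimensional $SU(3)$-module and so has no invariant vector; hence $\bar\partial\Omega_0=0$. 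By the Leibniz rule $\bar\partial_m(\Omega_0^{\otimes m})=0$, so $\Omega_0^{\otimes m}$ trivializes $\mathcal K^{\otimes m}$ holomorphically: writing an arbitrary section as $f\,\Omega_0^{\otimes m}$ we get $\bar\partial_m(f\Omega_0^{\otimes m})=\bar\partial f\otimes\Omega_0^{\otimes m}$, so $H^0(S^6,\mathcal K^{\otimes m})\cong\{f\in C^\infty(S^6,\C):\bar\partial f=0\}=H^0(S^6,\mathcal O)$ for every $m\ge 0$.

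Next I would check that every $f$ with $\bar\partial f=0$ is constant, so that $H^0(S^6,\mathcal O)=\C$. Pick $p_0$ where $|f|$ is maximal. Through $p_0$ there are embedded $J$-holomorphic disks with complex tangent directions covering $\mathbb{CP}^2$, and (as in the proof of Theorem \ref{hartogs}) they can be chosen to cover a neighbourhood of $p_0$; on each such disk $D$ the restriction $f|_D$ is holomorphic on the Riemann surface $D$ and has an interior maximum of its modulus, hence is constant there. Thus $\{|f|=\max|f|\}$ is open; being also closed it is all of $S^6$, and repeating the disk argument at an arbitrary point shows $\{f=f(p_0)\}$ is open and closed, whence $f\equiv f(p_0)$. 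Consequently $P_m(S^6,\mathsf J)=\dim H^0(S^6,\mathcal K^{\otimes m})=1$ for all $m\ge1$, and since $P_1\ne 0$ this gives $\kappa^{\mathsf J}=\limsup_{m\to\infty}\frac{\log P_m}{\log m}=0$.

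For the Hodge numbers, Serre duality (Proposition \ref{Serre}, with $E=\mathcal O$ and $n=3$) gives $\mathcal H^{(1,0)}_{\bar\partial}(S^6)\cong\bigl(\mathcal H^{(2,3)}_{\bar\partial}(S^6)\bigr)^*$ and $\mathcal H^{(2,0)}_{\bar\partial}(S^6)\cong\bigl(\mathcal H^{(1,3)}_{\bar\partial}(S^6)\bigr)^*$, so it suffices to prove $h^{1,0}=h^{2,0}=0$, i.e. $H^0(S^6,\Omega^1)=H^0(S^6,\Omega^2)=0$. Each of these is a finite-dimensional $G_2$-module (Theorem \ref{finite}); averaging over $G_2$ kills the invariant part, which corresponds to the $SU(3)$-invariants in $\Lambda^{1,0}T^*_{e_1}S^6$ and $\Lambda^{2,0}T^*_{e_1}S^6$, both zero since these are faithful $3$-dimensional $SU(3)$-modules. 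The remaining and main step is to eliminate the non-invariant part: I would pull the form back by $p$ (so $p^*$ is injective and commutes with $\bar\partial$), obtaining a horizontal, right-$SU(3)$-invariant $(1,0)$-form on $G_2$, expand it in the global left-invariant coframe dual to $\{f_1,\dots,f_6\}$, turn $\bar\partial=0$ into a first-order linear system whose coefficients are the structure constants of $\mathfrak g_2$ from the bracket table in the appendix, and combine this with the right-$SU(3)$-invariance — equivalently, decompose the coefficient functions on $G_2$ into matrix coefficients of irreducible representations via Peter--Weyl and Frobenius reciprocity — to force all coefficients to vanish. I expect this last computation to be the only genuinely delicate part of the argument: it is in the spirit of the explicit calculations in Sections \ref{kt}--\ref{T4}, but carried out on the Lie group $G_2$ rather than on a torus.
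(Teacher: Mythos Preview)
Your outline is correct and follows the same overall scheme as the paper: work on $G_2$ via the pseudoholomorphic submersion $p$, produce a $G_2$-invariant nowhere-vanishing $\bar\partial$-closed $(3,0)$-form to compute $P_m$, and for $h^{1,0},h^{2,0}$ expand $p^*s$ in the left-invariant coframe and use the structure constants of $\mathfrak g_2$ to force the coefficients to vanish. Two comparative remarks are worth making.

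For the plurigenera you take a cleaner route than the paper. The paper computes $\bar\partial\phi^i$ explicitly from the bracket table and checks $\bar\partial(\phi^1\wedge\phi^2\wedge\phi^3)=0$ by hand; your observation that any $G_2$-invariant section of $\Lambda^{3,1}$ must vanish (because the fiber is the conjugate standard $SU(3)$-representation, which has no nonzero invariants) bypasses that computation entirely. Your maximum-principle step via $J$-holomorphic disks is also fine, though the paper simply says ``$S^6$ is compact, so a pseudoholomorphic function is constant''.

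For $h^{1,0}$ and $h^{2,0}$ you correctly locate the crux and propose Peter--Weyl/Frobenius reciprocity to organize it, but the paper's actual argument is more elementary and you may be overestimating the work. Writing $p^*s=k_1\phi^1+k_2\phi^2+k_3\phi^3$, the equation $\bar\partial(p^*s)=0$ together with the formulas for $\bar\partial\phi^i$ gives $\bar\partial k_3=ik_1\bar\phi^6+ik_2\bar\phi^7+\tfrac12 k_3\bar\phi^4$, so in particular $\bar X_1 k_3=\bar X_2 k_3=\bar X_3 k_3=\bar X_5 k_3=0$. A short chain of three specific brackets (namely $[\bar X_1,\bar X_2]$, $[\bar X_3,\bar X_5]$, $[X_3,\bar X_3]$) then forces $\bar X_4 k_3=0$, whence $k_3=0$ and immediately $k_1=k_2=0$; the $(2,0)$ case is analogous. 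No isotypic decomposition is needed --- just a handful of entries from the bracket table. Your preliminary remark that the $G_2$-invariant part of $H^0(S^6,\Omega^p)$ vanishes is correct but, as you yourself note, does not by itself advance the proof.
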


We would like to thank several people, including Huijun Fan, Valentino Tosatti, Jiaping Wang and Bo Yang for encouraging us to proceed the calculation.

\begin{proof}

Compute the plurigenera $P_m(S^6, \mathsf J)$ first. Denote $(T^*S^6)^{1,0}$ the bundle of $(1,0)$ forms on $S^6$ and $p^*$ the pull back map of forms. As $p$ is $(\tilde{J},\mathsf{J})$-pseudoholomorphic, we have $p^*((T^*S^6)^{1,0})\subset (T^*G_2)^{1,0}=(\mathfrak g_2^*)^{1,0}$. $p^*$ is injective since $p$ is a submersion.
Denote $\{f^i, h^j\}$ the basis in $\mathfrak g_2^*$, dual to $\{f_i,h_j\}$. Then $(\mathfrak g_2^*)^{1,0}$ is generated by $\{\phi^1,\cdots,\phi^7\}$, where
 \begin{align*}&\phi^1=f^1-if^2, \ \ \ \phi^2=f^3-if^4,\ \ \ \phi^3=f^5-if^6, \\ & \phi^4=h^1-ih^2,\ \ \phi^5=h^3-ih^4,\ \ \phi^6=h^5-ih^6, \ \ \phi^7=h^7-ih^8.\end{align*}
As $[\mathfrak h,\mathfrak h]\subset \mathfrak h$, using the Lie brackets in the appendix, we get
\begin{align*}
df^1&=-f^2\wedge h^1-2f^3\wedge f^6+f^3\wedge h^4-2f^4\wedge f^5-f^4\wedge h^3-f^5\wedge h^6+f^6\wedge h^5,\\
df^2&=f^3\wedge h^3+f^4\wedge h^4-f^5\wedge h^5-f^6\wedge h^6+f^1\wedge h^1,\\
df^3&=-f^1\wedge h^4+2f^1\wedge f^6+f^2\wedge f^5-f^2\wedge h^3+f^4\wedge h^1-f^4\wedge h^2-f^5\wedge h^8+f^6\wedge h^7,\\
df^4&=f^1\wedge f^5+f^1\wedge h^3-f^2\wedge h^4-f^3\wedge h^1+f^3\wedge h^2-f^5\wedge h^7-f^6\wedge h^8,\\
df^5&=-f^1\wedge f^4+f^1\wedge h^6-f^2\wedge f^3+f^2\wedge h^5+f^3\wedge h^8+f^4\wedge h^7+ f^6\wedge h^2,\\
df^6&=-2f^1\wedge f^3-f^1\wedge h^5+f^2\wedge h^6-f^3\wedge h^7+f^4\wedge h^8-f^5\wedge h^2.
\end{align*}
Therefore, the definition of $\bar{\partial}$ gives \begin{align} \bar{\partial} \phi^1&=-\frac{i}{2}\phi^1\wedge \bar{\phi}^4-i\phi^2\wedge \bar{\phi}^5+i\phi^3\wedge \bar{\phi}^6,\notag \\
 \bar{\partial} \phi^2&=-\frac{i}{2}\phi^1\wedge \bar{\phi}^3-\frac{1-i}{2}\phi^2\wedge \bar{\phi}^4+i\phi^3\wedge \bar{\phi}^7,\label{s6d} \\
  \bar{\partial} \phi^3&=\frac{i}{2}\phi^1\wedge \bar{\phi}^2-\frac{i}{2}\phi^2\wedge \bar{\phi}^1+\frac{1}{2}\phi^3\wedge \bar{\phi}^4. \notag\end{align}
 Then $$\bar{\partial} (\phi^1\wedge \phi^2\wedge \phi^3)=\bar{\partial}\phi^1\wedge \phi^2\wedge \phi^3- \phi^1\wedge \bar{\partial}\phi^2\wedge \phi^3+\phi^1\wedge \phi^2\wedge \bar{\partial}\phi^3=0.$$ By the arguments in \cite{Br3} (equation (2.11) in \cite{Br3}), $\phi^1\wedge \phi^2\wedge \phi^3$ induces a nowhere-vanishing $G_2$-invariant $(3,0)$-form $\Phi$ on $S^6$. As $p$ is pseudoholomorphic and $p^*$ is injective, $\bar{\partial}\Phi=0$.

Assume $s\in H^0(S^6, \mathcal K_{\mathsf J})$, then $s=f\Phi$, where $f$ is a smooth function on $S^6$. From $\bar{\partial}s=0$ we get that $\bar{\partial}f=0$. Since $S^6$ is compact, the maximum principle gives that $f$ is a constant. Therefore, $P_1(S^6, \mathsf J)=h^{3,0}=1$ with $\Phi$ being a generator. Similarly, we get $P_m(S^6, \mathsf J)=1$ for $m\geq 2$, with $\Phi^m$ being a generator of $H^0(S^6, K_{\mathsf J}^{\otimes m})$. So $\kappa^{\mathsf J}=0$.

Next, we compute the Hodge numbers $h^{1,0}$ and $h^{2,0}$. Assume that $s\in H^{1,0}(S^6)$. Then $p^*s$ is in the span space of $\{\phi^1,\phi^2, \phi^3\}$, satisfying $\bar{\partial}(p^*s)=0$. Let $p^*s=k_1\phi^1+k_2\phi^2+k_3\phi^3$, where $k_i$ are smooth functions on $G_2$. From (\ref{s6d}) we get that
\begin{align}
\bar{\partial}k_3&=ik_1\bar{\phi}^6+ik_2\bar{\phi}^7+\frac{1}{2}k_3\bar{\phi}^4. \label{k13}
\end{align}
Let $X_i, 1\leq i\leq 7,$ be the dual complex vector of $\phi^i$. Namely, $X_1=\frac{1}{2}(f_1+if_2), \cdots, X_7=\frac{1}{2}(h_7+ih_8)$. From the Appendix, the following Lie brackets hold
\begin{align}
[\bar{X}_1,\bar{X}_2]=-iX_3+\frac{i}{2}\bar{X}_5, \quad [\bar{X}_3,\bar{X}_5]=\frac{i}{2}h_1,\quad [X_3,\bar{X}_3]=\frac{i}{2}h_2.\label{X11}
\end{align}
Equation (\ref{k13}) gives us that $$\bar{X}_1(k_3)=\bar{X}_2(k_3)=\bar{X}_3(k_3)=\bar{X}_5(k_3)=0.$$
From (\ref{X11}), we have $X_3(k_3)=0$ and $h_1(k_3)=0$. Then by the last relation in (\ref{X11}), $h_2(k_3)=0$. So $\bar{X}_4(k_3)=0$. Evaluate $\bar{X}_4$ to (\ref{k13}) we get that $k_3=0$. Then (\ref{k13}) directly gives that $k_1=k_2=0$. Therefore, $p^*s=0$. As $p^*$ is injective, we get $s=0$. Hence, $H^{1,0}(S^6)=0$ and $h^{1,0}=0$.

To calculate $h^{2,0}$, assume that $\sigma\in H^{2,0}(S^6)$. Then $p^*\sigma$ satisfies $\bar{\partial}(p^*\sigma)=0$. Let $p^*\sigma=l_1\phi^1\wedge\phi^2+l_2\phi^2\wedge\phi^3+l_3\phi^3\wedge\phi^1$, where $l_i$ are smooth functions on $G_2$. From (\ref{s6d}) we get
\begin{align*} \bar{\partial} (\phi^1\wedge \phi^2)&=\frac{1}{2}\phi^1\wedge\phi^2\wedge \bar{\phi}^4+i\phi^2\wedge \phi^3\wedge\bar{\phi}^6-i\phi^1\wedge \phi^3\wedge \bar{\phi}^7,\notag \\
 \bar{\partial} (\phi^2\wedge \phi^3)&=\frac{i}{2}\phi^1\wedge \phi^3\wedge\bar{\phi}^3-\frac{i}{2}\phi^2\wedge\phi^3\wedge \bar{\phi}^4+\frac{i}{2}\phi^1\wedge \phi^2\wedge \bar{\phi}^2,\notag \\
  \bar{\partial} (\phi^3\wedge \phi^1)&=-\frac{i}{2}\phi^1\wedge \phi^2\wedge\bar{\phi}^1+\frac{1-i}{2}\phi^1\wedge\phi^3\wedge \bar{\phi}^4-i\phi^2\wedge \phi^3\wedge \bar{\phi}^5. \end{align*}
So $\bar{\partial}(p^*\sigma)=0$ gives that
\begin{align}
\bar{\partial}l_2&=-il_1\bar{\phi}^6+\frac{i}{2}l_2\bar{\phi}^4+il_3\bar{\phi}^5. \label{l13}
\end{align}
Then $\bar{X}_1(l_2)=\bar{X}_2(l_2)=\bar{X}_3(l_2)=\bar{X}_7(l_2)=0$. By the Appendix, the following hold
\begin{align} &[\bar{X}_1,\bar{X}_7]=-\frac{i}{2}h_2,\hspace{1cm} [\bar{X}_2,\bar{X}_7]=i\bar{X}_6, \hspace{1cm} [\bar{X}_2,\bar{X}_6]=\frac{i}{2}(h_1+h_2).\label{h20} \end{align}
From (\ref{h20}), we have $h_1(l_2)=h_2(l_2)=0$. Then $\bar{X}_4(l_2)=0$. Putting it back to (\ref{l13}), we derive $l_1=l_2=l_3=0$. So $p^*\sigma=0$. By the injectivity, $\sigma=0$. Therefore, $H^{2,0}(S^6)=0$ and $h^{2,0}=0$.

By the Serre duality (Proposition \ref{Serre}), we have $h^{1,3}(S^6)=h^{2,3}(S^6)=0$.
\end{proof}

On the other hand, for a hypothetical complex structure on $S^6$, $P_1=h^{3,0}=0$. The key point is a $\bar{\p}$-closed $(3, 0)$ form is also $d$-closed on a complex $3$-fold.

\section{Appendix}

Direct calculation gives the following Lie brackets of $\mathfrak g_2$:

\begin{align*}&[f_1,f_2]=h_1+h_2,\hspace{.5cm}[f_1,f_3]=2f_6,\hspace{.5cm}[f_1,f_4]=f_5,\hspace{.5cm} [f_1,f_5]=-f_4,\\
&[f_1,f_6]=-2f_3,\hspace{.5cm}[f_1,h_1]=-(f_2-h_8),\hspace{.5cm}[f_1,h_2]=-h_8,\\
&[f_1,h_3]=-(f_4+h_6),\hspace{.5cm}[f_1,h_4]=f_3,\hspace{.5cm} [f_1,h_5]=f_6,\\
&[f_1,h_6]=-f_5+h_3,\hspace{.5cm}[f_1,h_7]=0,\hspace{.5cm}[f_1,h_8]=h_2\\
&[f_2,f_3]=f_5-h_3,\hspace{.5cm}[f_2,f_4]=-h_4,\hspace{.5cm}[f_2,f_5]=-f_3+h_5,\\
&[f_2,f_6]=h_6,\hspace{.5cm}[f_2,h_1]=f_1+h_7,\hspace{.5cm}[f_2,h_2]=-h_7,\\
&[f_2,h_3]=f_3-h_5,\hspace{.5cm}[f_2,h_4]=f_4,\hspace{.5cm}[f_2,h_5]=-f_5+h_3,\\
&[f_2,h_6]=-f_6,\hspace{.5cm}[f_2,h_7]=h_2,\hspace{.5cm}[f_2,h_8]=0\\
&[f_3,f_4]=h_2,\hspace{.5cm}[f_3,f_5]=h_8,\hspace{.5cm}[f_3,f_6]=2f_1,\hspace{.5cm}[f_3,h_1]=f_4+h_6,\\
&[f_3,h_2]=-f_4,\hspace{.5cm}[f_3,h_3]=-(f_2-h_8),\hspace{.5cm}[f_3,h_4]=-f_1,\hspace{.5cm}[f_3,h_5]=0,\\
&[f_3,h_6]=-(h_1+h_2),\hspace{.5cm}[f_3,h_7]=f_6,\hspace{.5cm}[f_3,h_8]=-f_5,
\end{align*}
\begin{align*}
&[f_4,f_5]=2(f_1+h_7),\hspace{.5cm}[f_4,f_6]=h_8,\hspace{.5cm}[f_4,h_1]=h_5-f_3,\\
&[f_4,h_2]=f_3,\hspace{.5cm}[f_4,h_3]=f_1+h_7,\hspace{.5cm}[f_4,h_4]=-f_2,\\
&[f_4,h_5]=-(h_1+h_2),\hspace{.5cm}[f_4,h_6]=0,\hspace{.5cm}[f_4,h_7]=-f_5,\hspace{.5cm}[f_4,h_8]=-f_6,\\
&[f_5,f_6]=-h_2,\hspace{.5cm}[f_5,h_1]=h_4,\hspace{.5cm}[f_5,h_2]=f_6,\\
&[f_5,h_3]=0,\hspace{.5cm}[f_5,h_4]=-h_1,\hspace{.5cm}[f_5,h_5]=f_2-h_8,\\
&[f_5,h_6]=f_1+h_7, \hspace{.5cm}[f_5,h_7]=f_4,\hspace{.5cm}[f_5,h_8]=f_3,\\
&[f_6,h_1]=h_3,\hspace{.5cm}[f_6,h_2]=-f_5,\hspace{.5cm}[f_6,h_3]=-h_1,\hspace{.5cm}[f_6,h_4]=0,\\
&[f_6,h_5]=-f_1,\hspace{.5cm}[f_6,h_6]=f_2,\hspace{.5cm}[f_6,h_7]=-f_3,\hspace{.5cm}[f_6,h_8]=f_4,\\
&[h_1,h_2]=0, \hspace{.5cm} [h_1,h_3]=-2h_4,\hspace{.5cm} [h_2,h_3]=h_4,\hspace{.5cm} [h_1,h_4]=2h_3,\\
&[h_2,h_4]=-h_3,\hspace{.5cm}[h_1,h_5]=[h_2,h_5]=-h_6, \hspace{.5cm} [h_1,h_6]=[h_2,h_6]=h_5,\\
 &[h_1,h_7]=h_8,\hspace{.5cm} [h_2,h_7]=-2h_8,\hspace{.5cm} [h_1,h_8]=-h_7, \hspace{.5cm}[h_2,h_8]=2h_7.
\end{align*}

\end{document}